\documentclass[preprint,11pt,a4paper,nopreprintline]{elsarticle}
\usepackage[T1]{fontenc}
\usepackage[utf8]{inputenc}
\usepackage{lmodern}
\usepackage[margin=27mm]{geometry}
\usepackage{amsmath,amssymb,amsthm,mathtools}
\usepackage{microtype}
\usepackage[hidelinks]{hyperref}
\numberwithin{equation}{section}
\newtheorem{theorem}{Theorem}[section]
\newtheorem{proposition}[theorem]{Proposition}
\newtheorem{lemma}[theorem]{Lemma}
\theoremstyle{remark}
\newtheorem{remark}[theorem]{Remark}
\newcommand{\R}{\mathbb R}
\newcommand{\Div}{\operatorname{div}}
\newcommand{\dd}{\,\mathrm{d}}
\newcommand{\PP}{\mathbb P}
\newcommand{\QQ}{\mathbb Q}
\allowdisplaybreaks[2]

\begin{document}
\begin{frontmatter}

\title{Global solutions to the compressible\\
Navier--Stokes--Poisson equations\\
in the half-space and exterior domains}

\author[inst1]{Hairong Liu}
\ead{hrliu@njust.edu.cn}
\author[inst1]{Jinlong Wu\corref{cor1}}
\ead{jinlongno@njust.edu.cn}
\author[inst2]{Hua Zhong}
\ead{huazhong@swjtu.edu.cn}
\cortext[cor1]{Corresponding author.}

\address[inst1]{School of Mathematics and Statistics,
Nanjing University of Science and Technology,\\
Nanjing 210094, P.R. China}
\address[inst2]{School of Mathematics, Southwest Jiaotong University,\\
Chengdu 611756, P.R. China}

\begin{abstract}
We study the initial boundary value problem for the compressible
Navier--Stokes--Poisson equations without damping in the three-dimensional
half-space and in smooth exterior domains. The velocity satisfies the
no-slip boundary condition, and the electric potential satisfies the
homogeneous Neumann boundary condition. For small perturbations of a
constant state in $H^3$, with a small initial electric field in $L^2$,
we establish global existence and uniqueness of strong solutions.
The uniform estimates combine density dissipation, tangential and normal
derivative estimates, and a modified energy expressed through the
Helmholtz projection. In exterior domains, boundary normal coordinates
and localized elliptic estimates control the geometric terms.
We also obtain algebraic decay in the full energy norm. In the half-space,
finite tangential negative Sobolev norms of order $-s$, with
$\frac12\leq s<1$, yield the rate $(1+t)^{-s/2}$. In exterior domains,
integrable initial density and velocity perturbations yield the same
form of estimate for $0<s<\frac14$. The exterior decay proof uses
whole-space negative-order estimates for localized variables and the
time integrability of the dissipation.
\end{abstract}

\begin{keyword}
Navier--Stokes--Poisson equations \sep
half-space \sep exterior domain \sep no-slip boundary condition \sep
global strong solutions \sep algebraic decay
\MSC[2020] 35Q35 \sep 35B40 \sep 76N10
\end{keyword}

\end{frontmatter}

\section{Introduction and main results}\label{sec:introduction}

The compressible Navier--Stokes--Poisson system consists of the
Navier--Stokes equations coupled with the self-consistent Poisson equation.
It describes the motion of charged particles under an electrostatic force
and arises in hydrodynamic models of plasmas and semiconductors; see
\cite{HsiaoLi2010} for an introduction. In this paper, we consider the
isentropic Navier--Stokes--Poisson system in either the half-space
$\R^3_+=\R^2\times\R_+$ or a smooth connected exterior domain
$\R^3\setminus\overline{\mathcal O}$, where $\mathcal O$ is bounded.
We write $\Omega$ for the domain under consideration. In the half-space,
$x=(y,x_3)$ with $y=(x_1,x_2)$.
The equations take the form
\begin{equation}\label{eq:NSP}
\left\{
\begin{aligned}
&\rho_t+\Div(\rho u)=0,\\
&\rho(u_t+u\cdot\nabla u)+\nabla p(\rho)
  =\mu\Delta u+(\mu+\lambda)\nabla\Div u+\rho\nabla\phi,\\
&\Delta\phi=\rho-1,
\end{aligned}
\right.
\qquad x\in\Omega,\quad t>0,
\end{equation}
where $\rho$, $u=(u_1,u_2,u_3)$ and $\phi$ denote the density,
velocity and electric potential, respectively. The pressure is given
by $p(\rho)=\rho^\gamma$, with $\gamma\geq1$, and the viscosity
coefficients satisfy
\begin{equation}\label{eq:viscosity}
\mu>0,\qquad \lambda+\frac23\mu\geq0.
\end{equation}
The constant background charge density has been normalized to one.
There is no damping term in the momentum equation. We impose the
boundary and initial conditions
\begin{equation}\label{eq:data}
\left\{
\begin{aligned}
&u=0,\qquad \partial_n\phi=0\quad\text{on }\partial\Omega,\\
&(\rho,u)(0,x)=(\rho_0,u_0)(x),
\end{aligned}
\right.
\end{equation}
where $n$ is the outward unit normal. We consider solutions near the constant state $(\rho,u)=(1,0)$.
We use the homogeneous Sobolev space
\begin{equation}\label{eq:normalization}
\dot H^1(\Omega)
=\{f\in L^6(\Omega):\nabla f\in L^2(\Omega)\},
\qquad \|f\|_{\dot H^1}:=\|\nabla f\|_{L^2}.
\end{equation}
The electric potential is chosen in $\dot H^1(\Omega)$ at each time,
which fixes the additive constant in the Neumann problem.

The global existence and large-time behavior of solutions to the
compressible Navier--Stokes equations have been studied extensively.
For the Cauchy problem, Matsumura and
Nishida~\cite{MatsumuraNishida1979,MatsumuraNishida1980} established global
existence of smooth solutions when the initial data are close to a
constant equilibrium. Their energy method provides a basic approach to
the interaction between the hyperbolic equation for the density and the
parabolic equations for the velocity and temperature. The smallness
assumption controls the nonlinear terms, while the coupling of the
equations supplies estimates for the density, which has no direct
viscous dissipation. In a different functional setting,
Danchin~\cite{Danchin2000} proved global well-posedness near a constant
state in critical Besov spaces for the isentropic system.

Considerable progress has also been made for initial data with less
regularity or larger oscillations. Hoff~\cite{Hoff1995} constructed
global solutions for multidimensional isothermal compressible flow with
small discontinuous perturbations. For the three-dimensional barotropic
system in bounded domains, Feireisl, Novotn\'y and
Petzeltov\'a~\cite{FeireislNovotnyPetzeltova2001} obtained global
finite-energy weak solutions for isentropic pressure laws with
adiabatic exponent $\gamma>3/2$. Huang, Li and Xin~\cite{HuangLiXin2012} established
global classical solutions to the three-dimensional isentropic Cauchy
problem with small initial energy, allowing large oscillations and
vacuum. For smooth perturbations, the
large-time behavior gives further information on the dissipative
structure; in particular, Hoff and Zumbrun~\cite{HoffZumbrun1995}
studied the multidimensional diffusion waves of compressible flow.

For domains with boundaries, the estimates must also be compatible with
the boundary conditions. Matsumura and
Nishida~\cite{MatsumuraNishida1983} proved global existence of small
solutions to the initial-boundary value problem in the half-space and
in exterior domains. Their treatment of derivatives near the boundary
is an important part of the small-data theory. Decay estimates in
unbounded domains were studied by Deckelnick~\cite{Deckelnick1992}, and
the behavior in three-dimensional exterior domains was further
investigated by Kobayashi and Shibata~\cite{KobayashiShibata1999} and
Kobayashi~\cite{Kobayashi2002}. In the half-space, Kagei and
Kobayashi~\cite{KageiKobayashi2002,KageiKobayashi2005} obtained detailed
large-time and asymptotic results. Under
spherical symmetry, Jiang~\cite{Jiang1996} proved global existence of
smooth solutions for large initial data in the exterior of a ball.

The Navier--Stokes--Poisson system introduces a self-consistent force
into the equations of compressible viscous flow. The potential is
determined by an elliptic equation, and its interaction with the fluid
changes both the energy estimates and the large-time behavior. We
refer to Hsiao and Li~\cite{HsiaoLi2010} for a discussion of the
compressible Navier--Stokes--Poisson equations. Donatelli~\cite{Donatelli2003}
studied local and global existence for the coupled problem.
Kobayashi and Suzuki~\cite{KobayashiSuzuki2008} considered global weak
solutions in bounded domains for the gravitational Navier--Stokes--Poisson
system. 
For small perturbations of a constant state in the whole space,
Hao and Li~\cite{HaoLi2009} established global existence in hybrid Besov
spaces in three and higher dimensions. Li, Matsumura and
Zhang~\cite{LiMatsumuraZhang2010} obtained optimal decay rates for the
three-dimensional compressible Navier--Stokes--Poisson system. Their
analysis exhibits the influence of the electric field on the decay of
the momentum. Wang and Wu~\cite{WangWu2010} derived pointwise estimates
and optimal $L^p$ decay rates in several space dimensions. The
corresponding decay problem for the non-isentropic system in
$\mathbb R^3$ was considered by Zhang, Li and Zhu~\cite{ZhangLiZhu2011}.

There are also energy methods and approaches in critical spaces
for the Cauchy problem. Wang~\cite{Wang2012} obtained decay estimates by an energy
method involving negative Sobolev norms. Chikami and
Danchin~\cite{ChikamiDanchin2017} proved global existence and time decay
estimates in critical spaces for the system with a repulsive potential,
in space dimensions greater than or equal to two. Bie, Wang and
Yao~\cite{BieWangYao2017} studied optimal decay in the critical $L^p$
framework. 
Global solutions near nonconstant equilibria have also attracted much
attention. Tan, Wang and Wang~\cite{TanWangWang2015} studied stability
of steady states associated with a non-flat doping profile and obtained
decay estimates under additional assumptions. In one space dimension,
Tan, Yang, Zhao and Zou~\cite{TanYangZhaoZou2013} established global
solutions to a non-isentropic Navier--Stokes--Poisson system with large
initial data. Their result allows density-dependent viscosity and
thermal conductivity depending on the density and temperature. 
For the initial-boundary value problem, Liu and
Zhong~\cite{LiuZhong2021} proved global existence and exponential
stability of smooth solutions near steady states in three-dimensional
bounded domains. Chen, Huang and Shi~\cite{ChenHuangShi2024} established
global well-posedness of classical solutions in three-dimensional
bounded domains with slip boundary conditions and small initial energy,
allowing vacuum and large oscillations. 
The work most closely related to the present paper is that of Liu,
Luo and Zhong~\cite{LiuLuoZhong2020}, who studied the
Navier--Stokes--Poisson and Euler--Poisson equations of plasma in
exterior domains. For the Navier--Stokes--Poisson system outside a ball,
they obtained global solutions with large spherically symmetric data;
this result includes the case without damping. For general
three-dimensional exterior domains, they proved global existence and
exponential decay for small perturbations in the presence of positive
damping. Their proof combines energy estimates, tangential and normal
derivative estimates, and Stokes estimates in which the pressure and
electric potential are treated together.

Du and Liu~\cite{DuLiu2025} proved global stability near nonconstant
steady states in the exterior of a ball with Navier-slip velocity and
Neumann potential conditions. Their work allows large variations of the
doping profile and contains refined exterior Neumann estimates. Here the
velocity satisfies the homogeneous Dirichlet condition, and the estimates
are established near a constant state in smooth exterior domains.

Recent work has also addressed Poisson-coupled flow in the half-space.
Wu and Wang~\cite{WuWang2023} obtained global small solutions for the
compressible elastic Navier--Stokes--Poisson system with damping in
half-spaces and exterior domains, allowing Dirichlet or Neumann
conditions for the potential. Shen, Wang and Wu~\cite{ShenWangWu2025}
studied the damped elastic system with Hodge boundary conditions for the
velocity. For elastic flows without damping, Shen and
Wang~\cite{ShenWang2026a,ShenWang2026b} established global small
solutions in the half-space under Hodge velocity and Neumann potential
conditions, treating both the isentropic and heat-conducting systems.
Their estimates use the deformation and an effective electric field.
For the Navier--Stokes--Poisson system with damping, Gao, Peng, Wei and
Yao~\cite{GaoPengWeiYao2026} proved global uniform regularity and a
vanishing viscosity limit under slip boundary conditions.
In the present paper, we consider the half-space and exterior problems for
\eqref{eq:NSP} with the no-slip condition for the velocity and the
homogeneous Neumann condition for the potential. We establish global
existence of small strong solutions without a symmetry assumption
or a damping term.

The absence of damping alters the structure of the energy estimates.
The basic energy
identity gives the dissipation of $\nabla u$, whereas the $L^2$ norms of
$u$ and $\nabla\phi$ are controlled only as part of the energy. In these
unbounded domains, these norms cannot be estimated by a Poincar\'e
inequality. To obtain the density dissipation, we test the momentum
equation by the density gradient and use the Poisson equation and its
Neumann boundary condition. For higher derivatives, we first estimate
the time and tangential derivatives. The normal component of the
momentum equation then provides estimates for the normal derivatives
of the density, and the Stokes estimates recover the full spatial
regularity of the velocity.

A further difficulty arises in the estimate for $\nabla u_t$.
The Neumann condition on $\phi_t$ does not remove the tangential
electric contribution at the boundary when the differentiated momentum
equation is tested by the viscous operator applied to $u_t$. We use the
Helmholtz projection to write this contribution as a time derivative
and volume integrals. The resulting correction is bounded in terms of the
lower-order energy and the density estimates. Combining these bounds
gives a closed energy inequality, with constants independent of time,
and hence a unique global strong solution near the constant state.

In an exterior domain, we use boundary normal coordinates to estimate
the tangential and normal derivatives. The pure second normal velocity
derivative cancels in the normal component of
$\Delta u-\nabla\Div u$. Local Stokes estimates then recover the mixed
derivatives in an order determined by the number of normal derivatives.
The geometric commutators are controlled by interpolation and lower-order
dissipation. Refined Neumann estimates bound derivatives of the electric
field by the density.

We obtain algebraic decay under additional assumptions on the initial
low frequencies. In the half-space we use tangential negative Sobolev
norms and the formulation in terms of the momentum $\rho u$, following
the negative-order energy approach of Wang~\cite{Wang2012}.
For the exterior problem, we localize the momentum and the potential
away from the obstacle while preserving the Poisson equation. The
resulting errors are supported in a fixed annulus. Whole-space Sobolev
inequalities bound their contribution by the integrable dissipation,
which gives uniform negative-order estimates and algebraic decay.

We write $q=\rho-1$ and $q_0=\rho_0-1$. Throughout the paper,
$\|\cdot\|$ and $(\cdot,\cdot)$ denote the norm and inner product
in $L^2(\Omega)$. The notation $\|\cdot\|_{L^p}$ is used for
other Lebesgue norms. The squared norm of a tuple is the sum of the
squared norms of its components. All Sobolev spaces are over $\Omega$ unless
otherwise specified, and vector-valued spaces are understood when
appropriate. The letters $c$ and $C$ denote positive constants,
which may change from line to line and depend only on
$\mu$, $\lambda$, $\gamma$ and, in the exterior case, $\Omega$, unless another dependence is stated.

\begin{theorem}\label{thm:global}
Let $\Omega=\R^3_+$. Assume \eqref{eq:viscosity} and $\gamma\geq1$. Let
\begin{equation}\label{eq:initial-regularity}
q_0\in H^3(\Omega),\qquad
u_0\in H^3(\Omega)\cap H^1_0(\Omega).
\end{equation}
Let $\phi_0\in\dot H^1(\Omega)$ solve
\begin{equation}\label{eq:initial-poisson}
\Delta\phi_0=q_0\quad\text{in }\Omega,\qquad
\partial_n\phi_0=0\quad\text{on }\partial\Omega.
\end{equation}
Suppose that the initial data satisfy the compatibility condition
\begin{equation}\label{eq:compatibility}
\left.
\left[
\frac{\mu\Delta u_0+(\mu+\lambda)\nabla\Div u_0}{\rho_0}
-u_0\cdot\nabla u_0
-\gamma\rho_0^{\gamma-2}\nabla q_0+\nabla\phi_0
\right]\right|_{\partial\Omega}=0.
\end{equation}
There exist constants $\varepsilon_*>0$ and $C>0$ such that, if
\begin{equation}\label{eq:smallness}
\varepsilon_0
:=\|q_0\|_{H^3}+\|u_0\|_{H^3}+\|\nabla\phi_0\|
\leq\varepsilon_*,
\end{equation}
then the problem \eqref{eq:NSP}, \eqref{eq:data}, with
$\phi(t)\in\dot H^1(\Omega)$, admits a unique global strong solution $(\rho,u,\phi)$. For every $T>0$, this solution satisfies
\begin{equation}\label{eq:regularity}
\begin{aligned}
&q\in C([0,T];H^3),\qquad
  u\in C([0,T];H^3\cap H^1_0)\cap L^2(0,T;H^4),\\
&\nabla\phi\in C([0,T];H^3),\qquad
  q_t\in C([0,T];H^2),\\
&u_t\in C([0,T];H^1_0)\cap L^2(0,T;H^2),\qquad
  u_{tt}\in L^2(0,T;L^2),
\end{aligned}
\end{equation}
and
\begin{equation}\label{eq:density-bounds}
\frac12\leq\rho(t,x)\leq\frac32,
\qquad (t,x)\in[0,\infty)\times\Omega.
\end{equation}
Moreover,
\begin{equation}\label{eq:global-estimate}
\begin{aligned}
&\sup_{t\geq0}\bigl(
\|q(t)\|_{H^3}^2+\|u(t)\|_{H^3}^2
+\|\nabla\phi(t)\|_{H^3}^2
+\|q_t(t)\|_{H^2}^2+\|u_t(t)\|_{H^1}^2\bigr)\\
&\quad+\int_0^\infty\bigl(
\|q\|_{H^3}^2+\|\nabla u\|_{H^3}^2
+\|\nabla u_t\|_{H^1}^2+\|q_t\|_{H^2}^2\bigr)\dd t
\leq C\varepsilon_0^2.
\end{aligned}
\end{equation}
\end{theorem}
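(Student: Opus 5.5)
We will prove Theorem~\ref{thm:global} by the standard continuation argument. Local existence and uniqueness in the class \eqref{eq:regularity} follow from a linearization and iteration scheme for the parabolic--hyperbolic system. Here the Neumann problem for $\phi$ is solved at each time in $\dot H^1$, with $\|\nabla\phi\|_{H^{k+1}}\le C(\|\nabla\phi\|+\|q\|_{H^k})$; the compatibility condition \eqref{eq:compatibility} guarantees $u_t(0)\in H^1_0$.

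Set
\[
E(t)=\|q\|_{H^3}^2+\|u\|_{H^3}^2+\|\nabla\phi\|_{H^3}^2+\|q_t\|_{H^2}^2+\|u_t\|_{H^1}^2 ,
\]
and let $D(t)$ be the dissipation in \eqref{eq:global-estimate}. We work under the a priori assumption $\sup_{[0,T]}E\le\delta^2$ with $\delta$ small. This assumption gives \eqref{eq:density-bounds} by Sobolev embedding. The goal is a modified energy $\mathcal E\sim E$ satisfying
\[
\frac{d}{dt}\mathcal E+cD\le C\delta D ,
\]
with constants independent of $T$; smallness then closes the argument and continuation gives global existence.

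The a priori estimate proceeds in the following order.

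\textbf{Step 1 (basic energy).} Multiply the momentum equation by $u$ and use the relative entropy. The electric term gives $\int\rho u\cdot\nabla\phi=-\int\Div(\rho u)\phi=\int\rho_t\phi=\tfrac12\frac{d}{dt}\|\nabla\phi\|^2$ (up to sign), using $\partial_n\phi=0$ and $u=0$ on the boundary. This yields control of $\|q\|^2+\|u\|^2+\|\nabla\phi\|^2$ and the dissipation $\|\nabla u\|^2$.

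\textbf{Step 2 (density dissipation).} Test the momentum equation with $\nabla q$. The term $\gamma\|\nabla q\|^2$ appears. The electric term $-\int\nabla\phi\cdot\nabla q=\int\Delta\phi\, q=\|q\|^2$ holds by the Neumann condition, so it gives the dissipation of $\|q\|^2$ instead of destroying it. The time derivative is handled as $\frac{d}{dt}(u,\nabla q)-(u,\nabla q_t)$, with $q_t=-\Div u+\dots$. The viscous terms are absorbed by $\|\nabla^2u\|$, which is recovered later.

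\textbf{Step 3 (time derivatives).} Repeat the energy estimates for $(q_t,u_t)$; $\partial_t$ preserves the boundary conditions. For $\nabla u_t$, test the $t$-differentiated momentum equation with $-\mu\Delta u_t-(\mu+\lambda)\nabla\Div u_t$.

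\textbf{Step 4 (tangential derivatives).} Apply $\partial_y^\alpha$ with $|\alpha|\le3$, which preserves both boundary conditions in the half-space. Then use the normal component of the momentum equation: it expresses $\partial_3 q$ through $(2\mu+\lambda)\partial_3\Div u$, giving a damped transport equation for $\partial_3^k q$ along particle paths. This yields the normal derivatives of $q$ with dissipation.

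\textbf{Step 5 (Stokes estimates).} Regard the momentum equation as a Stokes system with data $u_t$, nonlinear terms, and tangential derivatives. Stokes regularity with Dirichlet data recovers $\|\nabla u\|_{H^3}$. Elliptic Neumann estimates give $\|\nabla\phi\|_{H^3}\lesssim\|\nabla\phi\|+\|q\|_{H^2}$.

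Finally, combine Steps 1--5 with small weights in the usual hierarchical way. The nonlinear terms are bounded by $C\sqrt{E}\,D$ using Sobolev and Gagliardo--Nirenberg inequalities; the quadratic terms are harmless because $D$ contains all relevant higher norms. Uniqueness follows from an $L^2$ energy estimate for the difference of two solutions.

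\textbf{Main obstacle.} We expect the hardest step to be the $\nabla u_t$ estimate in Step 3. Testing $\rho\nabla\phi_t$ against the viscous operator applied to $u_t$ and integrating by parts produces the boundary term $\int_{\partial\Omega}(\nabla\times u_t\times n)\cdot\nabla\phi_t$. The condition $\partial_n\phi_t=0$ does not kill this term, because the tangential part of $\nabla\phi_t$ survives. We would remove it by decomposing $u_t=\PP u_t+\QQ u_t$ with the Helmholtz projection. The gradient part pairs with $\nabla\phi_t$ through $\Delta\phi_t=q_t=-\Div(\rho u)$ into a volume integral. The remaining part is written as $\frac{d}{dt}(\rho\nabla\phi,\PP\,\cdot)$ minus volume terms involving $\phi_t$; these volume terms are controlled by $\|\nabla\phi_t\|\lesssim\|\rho u\|$ and the density dissipation. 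Including this time-derivative correction in $\mathcal E$ keeps $\mathcal E$ equivalent to $E$, because the correction is bounded by lower-order energy times a small constant. The remaining delicate point is that $\|u\|$ and $\|\nabla\phi\|$ have no dissipation. We must therefore check that every term involving them undifferentiated appears multiplied by dissipative factors, or else in the form of $\|q\|$ via the Poisson equation.
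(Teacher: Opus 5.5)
Your architecture matches the paper's: basic energy, density dissipation from testing with $\nabla q$, time-derivative energies, tangential estimates, the normal momentum equation, Stokes recovery, and continuation. You also locate the real difficulty correctly. The gap is that your proposed fix for that difficulty would not close.

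Splitting $u_t=\PP u_t+\QQ u_t$ inside $Au_t$ only isolates the boundary term. Since $A\QQ u_t=-(2\mu+\lambda)\nabla\Div u_t$, the gradient part gives $(2\mu+\lambda)(q_t,\Div u_t)$. But $A\PP u_t=\mu\nabla\times\nabla\times u_t$, and the curl of a gradient vanishes, so $(\nabla\phi_t,A\PP u_t)$ is exactly the boundary integral you want to remove. You then say this part becomes a time derivative plus volume terms controlled by $\|\nabla\phi_t\|\lesssim\|\rho u\|$, without saying what is differentiated in time. That bound is only energy-level. In the half-space, $\|u\|$, $\|u_t\|$, $\|\nabla\phi\|$ and $\|\nabla\phi_t\|=\|\QQ(\rho u)\|$ are bounded but not square-integrable in time. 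Hence a quadratic term pairing one of them with a dissipative norm, such as $\|u\|\,\|\nabla^2u_t\|$, is not integrable on $(0,\infty)$. Your closing criterion ``multiplied by dissipative factors'' is therefore insufficient: every quadratic term needs two dissipative factors. For the same reason a direct trace estimate fails; it yields a term of size $\|u\|^{1/2}D^{3/4}$.

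The missing idea is to use $\nabla\phi_t=-\QQ u-\QQ(qu)$ for the whole pairing and move $\partial_t$ onto $u$:
\[
(\nabla\phi_t,Au_t)=-\frac{\dd}{\dd t}(\QQ u,Au)+(\QQ u_t,Au)-(\QQ(qu),Au_t).
\]
The middle term still contains the non-dissipative $\|u_t\|$. It is eliminated by applying $\PP$ to the momentum equation, since $\PP$ annihilates both gradient forces $\gamma\nabla q$ and $\nabla\phi$. This gives $\QQ u_t=u_t+\PP Au-\PP F$. Using $u_t=0$ on $\partial\Omega$, it follows that
\[
(\QQ u_t,Au)=\frac12\frac{\dd}{\dd t}a(u,u)+\|\PP Au\|^2-(\PP F,Au).
\]
The electric contribution thus splits into three pieces: a time derivative, the dissipative term $\|\PP Au\|^2\leq C\|\nabla^2u\|^2$, and nonlinear terms controlled by $\|\QQ(qu)\|\leq\|u\|_{L^\infty}\|q\|$. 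The corrected energy is $\mathcal E_2=\frac12a(u_t,u_t)+(\QQ u,Au)-\frac12a(u,u)$.

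This correction is not small relative to lower-order energy, contrary to your claim. Via the momentum equation, $(\QQ u,Au)$ and $a(u,u)$ are bounded only by $C(\|u\|^2+\|u_t\|^2+\|\nabla\phi\|^2+\|\nabla q\|^2)$, with no small factor. Positivity of the total energy therefore comes from two choices: fix the weight of $\mathcal E_2$ first, then take the weights of $\mathcal E_1$ and $\mathcal E_0$ large; and interpolate $\|\nabla q\|$ between $\|q\|$ and $\|\nabla^3q\|$. The linear error $\|\PP Au\|^2$ must likewise be absorbed by the spatial dissipation.
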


To describe the large-time behavior, we impose an additional condition
on the tangential low frequencies of the initial data. Let
$\Lambda_y=(-\Delta_y)^{1/2}$, with fractional powers defined by
\[
\widehat{\Lambda_y^r f}(\xi',x_3)
=|\xi'|^r\widehat f(\xi',x_3),
\]
where the Fourier transform is taken only in $y\in\R^2$.
The following result gives algebraic decay of the solution obtained in
Theorem~\ref{thm:global}.

\begin{theorem}\label{thm:decay}
Under the assumptions of Theorem~\ref{thm:global}, suppose in addition
that, for some $\frac12\leq s<1$,
\begin{equation}\label{eq:negative-data}
M_{s,0}:=\|\Lambda_y^{-s}(q_0,u_0,\nabla\phi_0)\|<\infty.
\end{equation}
Then the global solution satisfies
\begin{equation}\label{eq:solution-decay}
\begin{aligned}
&\|q(t)\|_{H^3}+\|u(t)\|_{H^3}
+\|\nabla\phi(t)\|_{H^3}\\
&\qquad+\|q_t(t)\|_{H^2}+\|u_t(t)\|_{H^1}
\leq C_s(1+t)^{-s/2},\qquad t\geq0,
\end{aligned}
\end{equation}
and
\begin{equation}\label{eq:dissipation-tail}
\int_t^\infty
\bigl(\|q\|_{H^3}^2+\|\nabla u\|_{H^3}^2
+\|\nabla u_t\|_{H^1}^2+\|q_t\|_{H^2}^2\bigr)(\tau)\dd\tau
\leq C_s(1+t)^{-s}.
\end{equation}
The constant $C_s$ may depend on $\mu,\lambda,\gamma,s,\varepsilon_0$
and $M_{s,0}$, but is independent of $t$. No smallness of $M_{s,0}$
is required.
\end{theorem}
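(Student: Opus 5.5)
The plan is to follow the negative-Sobolev energy method of Wang~\cite{Wang2012}, adapted to the tangential direction. Tangential Fourier multipliers commute with $\partial_{x_3}$ and preserve the boundary conditions $u=0$, $\partial_n\phi=0$ on $\{x_3=0\}$. The global bound \eqref{eq:global-estimate} is available from Theorem~\ref{thm:global}. From its proof I take a Lyapunov-type inequality
\[
\frac{\dd}{\dd t}\mathcal E(t)+c\,\mathcal D(t)\le 0,
\]
where $\mathcal E\sim \|q\|_{H^3}^2+\|u\|_{H^3}^2+\|\nabla\phi\|_{H^3}^2+\|q_t\|_{H^2}^2+\|u_t\|_{H^1}^2$ is the modified energy and $\mathcal D$ is the dissipation integrand in \eqref{eq:global-estimate}. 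The only parts of $\mathcal E$ not controlled by $\mathcal D$ are $\|u\|^2$ and $\|\nabla\phi\|^2$. Indeed $\|q\|$, $\|\nabla^2\phi\|\lesssim\|q\|$ by elliptic Neumann estimates, and $\|u_t\|$ (via the equation) are all dissipated.

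\textbf{Step 1: uniform negative-order estimate.} I rewrite the system in the perturbation form with momentum $m=\rho u$:
\[
q_t+\Div m=0,\qquad m_t-\mu\Delta u-(\mu+\lambda)\nabla\Div u+\gamma\nabla q-\nabla\phi=N,\qquad \Delta\phi=q,
\]
where $N$ is quadratic. I apply $\Lambda_y^{-s}$ and test the $q$-equation by $\gamma\Lambda_y^{-s}q$ and the momentum equation by $\Lambda_y^{-s}u$. The electric term, after integration by parts using $\partial_n\phi=0$ and $\Delta\phi=q=-\int\Div m$, becomes $\frac12\frac{\dd}{\dd t}\|\Lambda_y^{-s}\nabla\phi\|^2$. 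Mismatches between $m$ and $u$ are absorbed into $N$. This gives
\[
\frac{\dd}{\dd t}\|\Lambda_y^{-s}(q,u,\nabla\phi)\|^2+c\|\Lambda_y^{-s}\nabla u\|^2\lesssim |(\Lambda_y^{-s}N,\Lambda_y^{-s}u)|+\dots .
\]
To bound the nonlinear terms I use the two-dimensional Hardy--Littlewood--Sobolev inequality in $y$, $\|\Lambda_y^{-s}f\|_{L^2_y}\lesssim\|f\|_{L^{p}_y}$ with $1/p=1/2+s/2$, combined with $L^2$ in $x_3$. For a product $fg$, I place $f$ in $L^{2/s}_yL^\infty_{x_3}$ and $g$ in $L^2$. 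Anisotropic Sobolev and interpolation give $\|f\|_{L^{2/s}_y}\lesssim \|f\|^{s}\|\nabla_y f\|^{1-s}$, up to $x_3$ regularity supplied by $H^3$. The restriction $s\ge\tfrac12$ is what makes these exponents land in dissipated quantities such as $\|\nabla u\|_{H^1}$ and $\|q\|_{H^1}$, with no bare $\|u\|$. The result is
\[
\frac{\dd}{\dd t}\mathcal E_{-s}\lesssim \sqrt{\mathcal D}\,\sqrt{\mathcal E_{-s}}\,\bigl(\text{factor}\bigr),
\]
and the factor is integrable in time by \eqref{eq:global-estimate}. Gronwall then gives $\mathcal E_{-s}(t)\le C(M_{s,0},\varepsilon_0)$ for all $t$, with no smallness of $M_{s,0}$ needed. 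The restriction $s<1$ keeps $\Lambda_y^{-s}$ well-defined on the products and on $\nabla\phi$.

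\textbf{Step 2: interpolation and decay.} Tangential interpolation gives
\[
\|u\|\le \|\Lambda_y^{-s}u\|^{1/(1+s)}\|\nabla_y u\|^{s/(1+s)},
\]
and similarly $\|\nabla\phi\|$ is bounded using $\|\nabla_y\nabla\phi\|\lesssim\|q\|$. Hence $\mathcal E\lesssim \mathcal D^{s/(1+s)}+\mathcal D$, that is, $\mathcal D\gtrsim \mathcal E^{1+1/s}$ once $\mathcal E$ is bounded. Inserting this into the Lyapunov inequality gives $\mathcal E'+c\mathcal E^{1+1/s}\le0$, so $\mathcal E(t)\lesssim(1+t)^{-s}$, which is \eqref{eq:solution-decay}. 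Integrating the Lyapunov inequality from $t$ to $\infty$ gives \eqref{eq:dissipation-tail}.

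\textbf{Main obstacle.} I expect the main difficulty in Step 1 to be the boundary contributions when integrating by parts after $\Lambda_y^{-s}$. Viscous and pressure terms are harmless since $u=0$ at the boundary, but the electric term requires the Neumann condition and the identity $\Delta\phi_t=-\Div m$ with $m\cdot n=0$. A second difficulty is checking that each nonlinear term avoids the undissipated $\|u\|$ and $\|\nabla\phi\|$; I would handle this by distributing derivatives so that every factor contains $\nabla u$, $q$, or $u_t$.
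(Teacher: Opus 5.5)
\textbf{The gap is in Step~1 for $\frac12<s<1$.} You claim that $s\ge\frac12$ makes every nonlinear factor a dissipated quantity, so that the Gronwall factor is integrable by \eqref{eq:global-estimate} alone. This is true only at $s=\frac12$.

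The sharp anisotropic product bound is Lemma~\ref{lem:tangential-negative}, $\|\Lambda_y^{-s}(fg)\|\le C_s\|f\|^{s-\frac12}\|\nabla f\|^{\frac32-s}\|g\|$. Applied to the convective, electric and $m$--$u$ mismatch terms (with $f=u$ or $f=\nabla\phi$), it leaves a bare power $\|u\|^{s-\frac12}$ or $\|\nabla\phi\|^{s-\frac12}$.

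Scaling in $\R^3_+$ shows this cannot be avoided. Any bound of $\Lambda_y^{-s}(u\cdot\nabla u)$ by a product of two homogeneous velocity norms must carry $\frac52-s<2$ derivatives in total, so one factor has fewer than one derivative and is not dissipated.

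Your time factor is therefore of the form $N^{s-\frac12}D^{(\frac52-s)/2}$. If you integrate by parts and absorb $\|\nabla\Lambda_y^{-s}m\|^2$ by Young, it is $N^{2s+1}D^{\frac32-s}$. In both cases the power of $D$ is below $1$, and $\sup_tN<\infty$ together with $\int_0^\infty D\,\dd t<\infty$ does not make it integrable.

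Your slice estimate $\|f\|_{L^{2/s}_y}\lesssim\|f\|^s\|\nabla_yf\|^{1-s}$ also leaves a bare $\|f\|^s$ even at $s=\frac12$. What closes that case is the trace-type bound $\sup_{x_3}\|f(\cdot,x_3)\|_{L^4_y}\le C\|\nabla f\|$.

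\textbf{The missing idea is the paper's two-step bootstrap.}
\begin{enumerate}
\item H\"older in the tangential Fourier variable gives $M_{1/2,0}\le\varepsilon_0^{1-\frac1{2s}}M_{s,0}^{\frac1{2s}}<\infty$.
\item At $s=\frac12$ the nonlinear contribution is bounded by $N^2D$, which is integrable. Hence $\mathcal L_{1/2}$ is uniformly bounded, and your Step~2 gives $N(t)\le C(1+t)^{-1/4}$.
\item Only then return to the given $s$. In the divergence form with Young's inequality, H\"older in time gives
\[
\int_0^\infty N^{2s+1}D^{\frac32-s}\dd t\le\Bigl(\int_0^\infty D\dd t\Bigr)^{\frac32-s}\Bigl(\int_0^\infty N^{\frac{2s+1}{s-1/2}}\dd t\Bigr)^{s-\frac12}<\infty,
\]
because $(2s+1)/(4(s-\frac12))>1$. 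This bounds $\mathcal L_s$ uniformly, and Step~2 then runs with the given $s$.
\end{enumerate}

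If you keep your undifferentiated Gronwall form, the analogous H\"older step would require $\int_0^\infty N^2\,\dd t<\infty$, which the rate $(1+t)^{-1/4}$ does not give. You would additionally need the tail bound $\int_t^\infty D\,\dd\tau\lesssim(1+t)^{-1/2}$ on dyadic time intervals. The divergence form with absorption into $\|\nabla\Lambda_y^{-s}m\|^2$ is the cleaner route.

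Testing with $u$ instead of $m$ is harmless and even simplifies the initial negative norm. The extra mismatch terms are of the same type and need the same bootstrap.
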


We next state the corresponding results for an exterior domain. The
boundary is assumed to be $C^\infty$, and the exterior is connected.
All norms in the following two theorems are taken over this domain.

\begin{theorem}\label{thm:ext-global}
Let $\Omega=\R^3\setminus\overline{\mathcal O}$ be connected, where
$\mathcal O$ is a bounded open set with $C^\infty$ boundary.
Assume \eqref{eq:viscosity} and $\gamma\geq1$. Let
$q_0\in H^3(\Omega)$, $u_0\in H^3(\Omega)\cap H^1_0(\Omega)$,
and let $\phi_0\in\dot H^1(\Omega)$ solve
\eqref{eq:initial-poisson}. Suppose that
\eqref{eq:compatibility} holds. There exist constants
$\varepsilon_*>0$ and $C>0$, depending on $\Omega$ and the physical
parameters, such that, if
\[
\varepsilon_0:=\|q_0\|_{H^3}+\|u_0\|_{H^3}
+\|\nabla\phi_0\|\leq\varepsilon_*,
\]
then \eqref{eq:NSP}, \eqref{eq:data}, with
$\phi(t)\in\dot H^1(\Omega)$, admits a unique global strong solution.
For every $T>0$, this solution has the regularity
\eqref{eq:regularity} and satisfies the pointwise bounds
\eqref{eq:density-bounds}. Moreover,
\begin{equation}\label{eq:ext-global-estimate}
\begin{aligned}
&\sup_{t\geq0}\bigl(
\|q(t)\|_{H^3}^2+\|u(t)\|_{H^3}^2
+\|\nabla\phi(t)\|_{H^3}^2
+\|q_t(t)\|_{H^2}^2+\|u_t(t)\|_{H^1}^2\bigr)\\
&\quad+\int_0^\infty\bigl(
\|q\|_{H^3}^2+\|\nabla u\|_{H^3}^2
+\|\nabla u_t\|_{H^1}^2+\|q_t\|_{H^2}^2\bigr)\dd t
\leq C\varepsilon_0^2.
\end{aligned}
\end{equation}
\end{theorem}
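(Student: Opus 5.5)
The plan is the standard continuation argument: a local existence theorem plus a uniform a priori estimate. Local existence and uniqueness of a strong solution with the regularity \eqref{eq:regularity} on some $[0,T_0]$ will be taken from the classical theory (Matsumura--Nishida type linearization and iteration, with the potential recovered at each step from the Neumann problem). The compatibility condition \eqref{eq:compatibility} ensures $u_t(0)\in H^1_0$. The core task is then the following a priori bound. If the solution exists on $[0,T]$ and
\[
N(T)^2:=\sup_{0\le t\le T}\bigl(\|q\|_{H^3}^2+\|u\|_{H^3}^2+\|\nabla\phi\|_{H^3}^2+\|q_t\|_{H^2}^2+\|u_t\|_{H^1}^2\bigr)\le\delta^2
\]
with $\delta$ small, then the left side of \eqref{eq:ext-global-estimate} on $[0,T]$ is at most $C\varepsilon_0^2$, with $C$ independent of $T$. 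The density bounds \eqref{eq:density-bounds} follow from Sobolev embedding, and the usual continuity argument then closes.

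For the a priori estimate I follow the half-space scheme of Theorem~\ref{thm:global}, adapted to the curved boundary.

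\begin{enumerate}
\item \textbf{Basic energy.} Test by $u$ and use $\Delta\phi=q$ with $\partial_n\phi=0$. This gives the energy of $(q,u,\nabla\phi)$ and dissipation of $\nabla u$.
\item \textbf{Density dissipation.} Test the momentum equation by $\nabla q$. The Poisson term yields $\|q\|^2$ after integrating by parts, since the boundary term vanishes by the Neumann condition. This gives $\|q\|_{H^1}^2$ in the dissipation.
\item \textbf{Time derivatives.} Differentiate in $t$ and use $u_t=0$ on $\partial\Omega$. This gives control of $u_t$, $\nabla u_t$, and the $L^2$ energy of $\nabla\phi_t$.
\item \textbf{Estimate for $\nabla u_t$.} Test with the viscous operator applied to $u_t$. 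Here the boundary electric term $\nabla\phi_t$ does not vanish tangentially. As in the half-space, I write $\nabla\phi_t$ through the Helmholtz projection: since $\Delta\phi_t=-\Div(\rho u)$, we have $\nabla\phi_t=-(I-\PP)(\rho u)$. The bad term then becomes a time derivative plus volume integrals.
\item \textbf{Boundary charts and higher derivatives.} Cover a neighborhood of the compact boundary $\partial\Omega$ by finitely many boundary normal coordinate patches, and take an interior cutoff for the rest. The interior part is treated as in $\R^3$. In each patch, estimate tangential derivatives $\partial_\tau^k$, $k\le3$, by the energy method. The commutators from the variable coefficients are lower order and are bounded by $C\|\cdot\|_{H^{k-1}}$ times the dissipation, after interpolation.
\item \textbf{Normal derivatives of $q$.} Take the normal component of the momentum equation. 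Because $\Delta u-\nabla\Div u$ contains no $\partial_n^2u_n$, it combines with the mass equation into a damped transport equation for $\partial_n q$, giving $\partial_n^jq$ with the factor $(2\mu+\lambda)^{-1}\gamma$ as damping.
\item \textbf{Velocity regularity.} Local Stokes estimates, with $q$ and $\phi$ treated together, recover $\nabla^{k+2}u$ inductively in the number of normal derivatives.
\item \textbf{Electric field.} The exterior Neumann estimate $\|\nabla^{k+1}\phi\|\le C\|q\|_{H^{k-1}}$ for $k\ge1$, with $\nabla\phi\in L^2$ fixed by the $\dot H^1$ normalization, bounds $\nabla\phi$ in $H^3$.
\end{enumerate}

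A suitable linear combination with small weights on the lower-order pieces gives
\[
\frac{d}{dt}\mathcal E+c\,\mathcal D\le C\delta\,\mathcal D,
\]
where $\mathcal E\sim N(t)^2$ and $\mathcal D$ is the integrand in \eqref{eq:ext-global-estimate}. Integrating in time yields the bound.

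\textbf{Main obstacle.} I expect the hardest step to be the $\nabla u_t$ boundary term in the exterior domain, compounded by the geometric error terms. A Poincar\'e inequality is unavailable, and the lower-order terms involving $\|u\|$ and $\|\nabla\phi\|$ appear only in the energy, not in the dissipation. So every commutator must carry at least one dissipative factor, or be absorbed via the small coefficient of a cutoff derivative supported on a bounded set, where local Poincar\'e- and Korn-type bounds do apply. There I would try first to use the boundedness of the boundary strip: estimate $\|u\|_{L^2(\text{supp}\,\chi)}\le C\|\nabla u\|$ using $u=0$ on $\partial\Omega$. For the Helmholtz-projection correction, I need boundedness of $\PP$ on $L^2$ and $H^1$ in exterior domains, which holds for smooth exterior domains. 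The correction term $(\nabla\phi_t,\text{boundary trace})$ should be bounded by $\delta\mathcal E$ plus lower-order density terms already controlled by the preceding steps.
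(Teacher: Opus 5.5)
Your outline follows the paper's proof essentially step for step. The shared steps are: local theory plus continuation; the basic energy; density dissipation from testing with $\nabla q$; the time-differentiated energy; the $\nabla u_t$ estimate, in which $\nabla\phi_t=-\QQ((1+q)u)$ turns the boundary electric term into a time derivative plus volume integrals; localized tangential estimates in boundary normal charts; the normal momentum equation with the cancellation of the pure second normal derivative; local Stokes estimates ordered by the number of normal derivatives; and the exterior Neumann bounds for $\phi$. Your damped-transport form of the normal step is equivalent to the paper's testing with $\partial_y^{2-j}\partial_s^{j+1}(q+w)$. Only $L^2$-boundedness of $\PP$ is needed, since the projection identity never commutes $\PP$ with $A$ or with derivatives.

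The one point that would not close as you state it is the final combination. The Helmholtz correction $(\QQ u,Au)-\frac12a(u,u)$ in $\mathcal E_2$ is not of size $\delta\mathcal E$. It is an indefinite quadratic term of the size of the low-order energy, and the $\mathcal E_2$ inequality also contains the non-small term $\|\PP Au\|^2\le C\|\nabla^2u\|^2$. In the exterior domain, \eqref{eq:ext-lame} adds a further $C\|\nabla u_t\|^2$. Likewise, the normal/Stokes step leaves $C(\|\nabla u_t\|_{H^1}^2+\|q\|^2+\|\nabla u\|^2)$ on its right-hand side. The geometric commutators in the tangential step are linear and not small; their smallness comes from interpolation against the full $H^3$ dissipation, not from cutoff derivatives, since the charts have fixed size.

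All of this can be absorbed only if the lower-order pieces carry large weights. The paper uses $\mathcal H+M_2\mathcal E_2+M_3(u,\nabla q)+M_1\mathcal E_1+M_0\mathcal E_0$. It chooses $M_2,M_3$ to absorb $\|\nabla^2u_t\|^2$ and $\|q\|^2$ from the $\mathcal H$ estimate, and interpolates $\|\nabla u\|_{H^1}^2\le\epsilon\|\nabla u\|_{H^3}^2+C_\epsilon\|\nabla u\|^2$; only then does it choose $M_1$ and $M_0$. Positivity of $\mathcal E$ rests on \eqref{eq:cross-control}. This bounds $|(\QQ u,Au)|+a(u,u)+|(u,\nabla q)|$ by $\epsilon\|\nabla^3q\|^2$ plus the $\mathcal E_0,\mathcal E_1$ quantities, via \eqref{eq:Au-control}. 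Here $\epsilon$ is fixed after $M_2,M_3$, and $M_1,M_0$ are enlarged afterwards. Inside $\mathcal H$, the order is $\eta$, then $M_4$, then the interpolation parameter of the tangential estimate. ``Small weights on the lower-order pieces'' is the reverse of this hierarchy; with it, neither positivity of $\mathcal E$ nor the absorption of the lower-order dissipation terms goes through.
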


\begin{theorem}\label{thm:ext-decay}
Under the assumptions of Theorem~\ref{thm:ext-global}, suppose in
addition that $q_0,u_0\in L^1(\Omega)$. Then, for every
$0<s<\frac14$, the global solution satisfies
\begin{equation}\label{eq:ext-solution-decay}
\begin{aligned}
&\|q(t)\|_{H^3}+\|u(t)\|_{H^3}
+\|\nabla\phi(t)\|_{H^3}\\
&\qquad+\|q_t(t)\|_{H^2}+\|u_t(t)\|_{H^1}
\leq C_s(1+t)^{-s/2},\qquad t\geq0,
\end{aligned}
\end{equation}
and
\begin{equation}\label{eq:ext-dissipation-tail}
\int_t^\infty
\bigl(\|q\|_{H^3}^2+\|\nabla u\|_{H^3}^2
+\|\nabla u_t\|_{H^1}^2+\|q_t\|_{H^2}^2\bigr)(\tau)\dd\tau
\leq C_s(1+t)^{-s}.
\end{equation}
The constant $C_s$ may depend on
$\Omega,\mu,\lambda,\gamma,s,\varepsilon_0,
\|q_0\|_{L^1}$ and $\|u_0\|_{L^1}$, but is independent of $t$.
No smallness of the two $L^1$ norms is required.
\end{theorem}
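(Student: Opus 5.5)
The plan is to combine the uniform energy inequality behind Theorem~\ref{thm:ext-global} with a whole-space negative Sobolev estimate for localized variables, in the spirit of Wang~\cite{Wang2012}. The proof of Theorem~\ref{thm:ext-global} is expected to yield an energy functional $\mathcal E(t)$, equivalent to the square of the left side of \eqref{eq:ext-solution-decay}, together with a dissipation $\mathcal D(t)$ dominating the integrand of \eqref{eq:ext-dissipation-tail}, such that
\[
\frac{d}{dt}\mathcal E+c\,\mathcal D\le 0 .
\]
The gap between $\mathcal E$ and $\mathcal D$ lies only in $\|u\|^2+\|\nabla\phi\|^2$, which the dissipation does not contain (it only controls $\nabla u$ and $q$).

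To close this gap, fix a cutoff $\chi\in C^\infty(\R^3)$ with $\chi=1$ outside a large ball and $\chi=0$ near $\mathcal O$. Set $m=\chi\rho u$ and $\sigma=\chi q$, and extend both by zero to $\R^3$. For the potential, define $\psi$ on $\R^3$ by $\Delta\psi=\sigma$, so the localized system has the form
\[
\sigma_t+\Div m=f_1,\qquad m_t+\gamma\nabla\sigma-\nabla\psi-\mu\Delta m-(\mu+\lambda)\nabla\Div m=f_2 .
\]
The errors $f_1,f_2$ consist of quadratic nonlinear terms, commutator terms supported in the fixed annulus $\{\nabla\chi\neq0\}$, and the difference $\chi\nabla\phi-\nabla\psi$. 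This difference is harmonic-like away from the annulus; it will be estimated by elliptic/Neumann bounds in terms of $\|q\|_{H^1}$ plus a decaying far-field piece.

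Next apply $\Lambda^{-s}$ (whole-space Fourier) and perform the linear energy estimate:
\[
\frac{d}{dt}\|\Lambda^{-s}(\sigma,m,\nabla\psi)\|^2+c\|\Lambda^{1-s}m\|^2\le C\bigl|(\Lambda^{-s}f,\Lambda^{-s}(\sigma,m))\bigr| .
\]
By Hardy--Littlewood--Sobolev, $\|\Lambda^{-s}f\|\le C\|f\|_{L^{p}}$ with $1/p=1/2+s/3$. The annulus terms are compactly supported, so their $L^p$ norms are bounded by $\|(q,\nabla u,u)\|_{L^2(\text{annulus})}$. The term $u$ on a bounded set is controlled by $\|\nabla u\|$ through the Dirichlet condition, hence by $\mathcal D^{1/2}$. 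The quadratic terms are bounded by $\mathcal E^{1/2}\mathcal D^{1/2}$. The initial value is finite because $L^1\cap L^2\subset\dot H^{-s}$ for $s<3/2$, and $\nabla\psi_0$ lies in $\dot H^{-s}$ provided $\sigma_0\in\dot H^{-1-s}$. That requires $s+1<3/2$, so $s<1/2$.

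Writing $N(t)=\sup_{\tau\le t}\|\Lambda^{-s}(\sigma,m,\nabla\psi)\|$, integration gives
\[
N^2\le C+C\int_0^t \mathcal D^{1/2}N\,d\tau .
\]
This does not close directly, since $\int\mathcal D^{1/2}$ is not integrable. This is the \emph{main obstacle}. I would first try to pair the annulus errors against $\Lambda^{-s}$ using the dual HLS bound, $\|\Lambda^{-s}g\|_{L^{p'}}$ with $p'>2$ controlled by $\|\Lambda^{1-s}g\|$ via Sobolev. That yields $\int \mathcal D^{1/2}\|\Lambda^{1-s}m\|$, which is absorbable by Cauchy--Schwarz using $\int\mathcal D<\infty$. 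The $\sigma$ component lacks this dissipation, and the potential term brings $\|\Lambda^{-s}\nabla\psi\|\sim\|\Lambda^{-1-s}\sigma\|$. These losses force the further restriction; the stated range $s<1/4$ presumably comes from balancing these exponents, and I would tune $p$ accordingly. Once the nonlinear terms are placed in the form $\mathcal E^{1/2}\mathcal D^{1/2}N$ with $\mathcal E$ small, the uniform bound on $N$ follows.

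Finally, interpolate: for $\|u\|$ and $\|\nabla\phi\|$,
\[
\|g\|\le\|\Lambda^{-s}g\|^{1/(1+s)}\|\nabla g\|^{s/(1+s)} ,
\]
using the localized variables plus annulus remainders controlled by $\mathcal D$. This gives
\[
\mathcal E\le C\,\mathcal D^{s/(1+s)}+C\mathcal D ,\qquad\text{hence}\qquad \mathcal D\ge c\,\mathcal E^{1+1/s}.
\]
The ODE comparison then yields $\mathcal E(t)\le C(1+t)^{-s}$, which is \eqref{eq:ext-solution-decay}. Integrating the energy inequality from $t$ to $\infty$ gives \eqref{eq:ext-dissipation-tail}.
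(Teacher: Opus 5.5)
There is a genuine gap. It sits where you place the ``main obstacle'', but it is structural, not a matter of tuning exponents.

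\textbf{Where your localization fails.} With $\sigma=\chi q$, $m=\chi\rho u$ and $\Delta\psi=\sigma$ on $\R^3$, the continuity error is $f_1=\nabla\chi\cdot\rho u$. Since $u=0$ on $\partial\Omega$,
\[
\int_{\R^3}f_1\dd x=-\frac{\dd}{\dd t}\int_\Omega(1-\chi)q\dd x ,
\]
which is generally nonzero, because mass moves in and out of the region near the obstacle.

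Through the Poisson coupling this error enters the electric part of the negative-order energy, and your displayed inequality omits that term. Using $\Div m=f_1-\sigma_t$,
\[
-(\nabla\psi,\Lambda^{-2s}m)=\frac12\frac{\dd}{\dd t}\|\Lambda^{-s}\nabla\psi\|^2-(\Lambda^{-2-2s}\sigma,f_1).
\]
On the annulus, $\Lambda^{-2-2s}\sigma$ is controlled by $\|\Lambda^{-1-s}\sigma\|\sim\mathcal L_s^{1/2}$ or by $\|\sigma\|_{L^1}$, but by no dissipative norm.

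Likewise, $\chi\nabla\phi-\nabla\psi$ has divergence $\nabla\chi\cdot\nabla\phi$, whose mean is $\int_\Omega(1-\chi)q\dd x$. So your ``decaying far-field piece'' is a monopole of size $|x|^{-2}$, and its pairing with $\Lambda^{-2s}m$ is only of order $\mathcal D^{1/2}\|m\|$.

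Both terms leave you with $\int_0^t\mathcal D^{1/2}$ times a non-dissipative quantity, and no choice of $p$ removes this.

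The density component, which you flag as lacking dissipation, is in fact harmless. $\mathcal D$ contains $\|q\|_{H^1}^2$, and
\[
|(f_1,\Lambda^{-2s}\sigma)|\le\|f_1\|\,\|\Lambda^{-2s}\sigma\|_{L^2(\mathcal A)}\le C\|\nabla u\|\,\|\nabla\sigma\|\le C\mathcal D
\]
by the whole-space Sobolev embedding of order $1+2s<\frac32$. This embedding, not a balance of losses, is where $s<\frac14$ comes from.

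\textbf{The missing idea.} Localize the potential rather than the density, and define the localized density through the Poisson equation. The paper takes $\widetilde\phi=\chi(\phi-\phi_{\mathcal A})$, $\widetilde q=\Delta\widetilde\phi$, and $\widetilde m=\chi(1+q)u$ (your $m$). Then:
\begin{itemize}
\item $\nabla\widetilde\phi-\chi\nabla\phi=(\phi-\phi_{\mathcal A})\nabla\chi$ is supported in the annulus.
\item The continuity error equals $\nabla\chi\cdot\PP((1+q)u)+\Div\bigl((\phi_t-\partial_t\phi_{\mathcal A})\nabla\chi\bigr)$. It has zero mean because $\PP((1+q)u)$ is orthogonal to $\nabla(1-\chi)$.
\item A Bogovskii right inverse on the annulus writes it as $\Div R_0$ with $R_0\in H^1_0(\mathcal A)$ and $\|R_0\|\le C\mathcal D^{1/2}$. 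This gives $\nabla\widetilde\phi_t=-\QQ_{\R^3}(\widetilde m-R_0)$.
\item The electric error becomes $(R_0,\Lambda^{-2s}\nabla\widetilde\phi)$, bounded by $C\|R_0\|\,\|\nabla^2\widetilde\phi\|\le C\mathcal D$. Every other error is likewise $O(\mathcal D)$, or $O(N\mathcal D)$ for the stress term.
\end{itemize}
Since $\int_0^\infty\mathcal D<\infty$, $\mathcal L_s$ stays bounded by a plain time integration, with no bootstrap. Your final interpolation and ODE comparison then apply as you describe.
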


\begin{remark}
The smallness condition in Theorem~\ref{thm:global} includes the $L^2$
norm of the initial electric field. This low-order norm is retained in
the energy, while higher derivatives of the potential are controlled
by the density through the Poisson equation. The compatibility condition
\eqref{eq:compatibility} means that $u_t(0)$ has zero trace.
The additional assumption \eqref{eq:negative-data} controls tangential
low frequencies. By the two-dimensional Hardy--Littlewood--Sobolev
inequality, a sufficient condition for \eqref{eq:negative-data} is
\[
q_0,\ u_0,\ \nabla\phi_0
\in L^2\bigl(\R_+;L^{2/(1+s)}(\R^2)\bigr).
\]
\end{remark}

\begin{remark}
The decay exponents in Theorems~\ref{thm:decay} and
\ref{thm:ext-decay} follow from the negative-order estimates used here.
For the half-space, the assumptions control tangential low frequencies;
they differ from the isotropic assumptions in whole-space decay results
such as \cite{LiMatsumuraZhang2010,Wang2012}. For exterior domains, the
restriction $s<\frac14$ arises when a compactly supported error is tested
against $\Lambda^{-2s}f$: the Sobolev order $1+2s$ must be less than
$\frac32$. Establishing optimal rates for either boundary problem
requires further low-frequency information and matching lower bounds.
\end{remark}

In Section~\ref{sec:preliminaries}, we collect the analytic and geometric
lemmas for both domains and state the local existence result.
Sections~\ref{sec:estimates} and~\ref{sec:exterior} establish the uniform
a priori estimates in the half-space and exterior domains, respectively.
The global existence and decay theorems are proved in
Section~\ref{sec:global}.

\section{Preliminaries}\label{sec:preliminaries}

The outward unit normal is denoted by $n$; for $\Omega=\R^3_+$, $n=-e_3$.
In the half-space, we use
$\nabla_y=(\partial_1,\partial_2)$,
$\Delta_y=\partial_1^2+\partial_2^2$ and $u_y=(u_1,u_2)$.
We write $\Div_yu_y=\partial_1u_1+\partial_2u_2$ and use
$\mathbb N_0=\{0,1,2,\ldots\}$.
For a multi-index $\alpha=(\alpha_1,\alpha_2,\alpha_3)\in\mathbb N_0^3$,
set
\[
|\alpha|=\alpha_1+\alpha_2+\alpha_3,\qquad
\partial^\alpha
=\partial_1^{\alpha_1}\partial_2^{\alpha_2}\partial_3^{\alpha_3}.
\]
For a tangential multi-index $\beta=(\beta_1,\beta_2)\in\mathbb N_0^2$, set
$\partial_y^\beta=\partial_1^{\beta_1}\partial_2^{\beta_2}$.
For $k\geq0$, we use the convention
\[
\|\partial_y^k f\|^2
=\sum_{|\beta|=k}\|\partial_y^\beta f\|^2.
\]
Expressions involving $\partial_y^k$ are understood componentwise and
summed over these multi-indices. In particular, $\partial_y^0 f=f$ and
$\|\partial_y^1 f\|=\|\nabla_y f\|$.
The notation $\nabla^k f$ denotes the full array of ordered
spatial derivatives of order $k$.

We use the Lam\'e operator and its bilinear form
\begin{equation}\label{eq:lame-definition}
\begin{aligned}
&Az=-\mu\Delta z-(\mu+\lambda)\nabla\Div z,\\
&a(z,v)=\mu(\nabla z,\nabla v)
       +(\mu+\lambda)(\Div z,\Div v).
\end{aligned}
\end{equation}
By \eqref{eq:viscosity}, $a(z,z)\geq c\|\nabla z\|^2$.
For $v\in H^1_0$, integration by parts gives $(Az,v)=a(z,v)$.

Let $\PP$ be the orthogonal Helmholtz projection in $L^2(\Omega)$
onto divergence-free vector fields with zero normal trace, and put
$\QQ=I-\PP$. The gradient field $\QQ z=\nabla\psi$ is characterized by
\begin{equation}\label{eq:projection-definition}
(\nabla\psi,\nabla\zeta)=(z,\nabla\zeta),
\qquad \zeta\in\dot H^1(\Omega).
\end{equation}
Both projections are bounded in $L^2$. In the half-space they commute
with tangential derivatives.

\subsection{Sobolev and elliptic estimates}

On either domain, we use the Sobolev inequalities
\begin{equation}\label{eq:sobolev}
\|f\|_{L^6}\leq C\|\nabla f\|,\qquad
\|f\|_{L^\infty}\leq C\|f\|_{H^2},\qquad
\|\nabla f\|_{L^\infty}\leq C\|f\|_{H^3},
\end{equation}
for functions in the indicated spaces. In the first inequality,
$f\in\dot H^1(\Omega)$. We also use the product estimate
\begin{equation}\label{eq:sobolev-product}
\|fg\|_{H^m}
\leq C\bigl(\|f\|_{L^\infty}\|g\|_{H^m}
+\|g\|_{L^\infty}\|f\|_{H^m}\bigr),\qquad m=2,3,
\end{equation}
and the corresponding commutator estimate
\begin{equation}\label{eq:sobolev-commutator}
\|[\partial^\alpha,f]\nabla g\|
\leq C\bigl(\|\nabla f\|_{L^\infty}\|\nabla^m g\|
+\|\nabla^m f\|\|\nabla g\|_{L^\infty}\bigr),
\qquad |\alpha|=m,\quad 1\leq m\leq3.
\end{equation}
Here $[\partial^\alpha,f]\nabla g
=\partial^\alpha(f\nabla g)-f\partial^\alpha\nabla g$.

For a fixed bounded set $K=\Omega\cap B_R$ in an exterior domain,
H\"older's inequality and \eqref{eq:sobolev} give
\begin{equation}\label{eq:ext-local-sobolev}
\|f\|_{L^2(K)}\leq |K|^{1/3}\|f\|_{L^6(\Omega)}
\leq C_K\|\nabla f\|_{L^2(\Omega)}.
\end{equation}
The exterior Sobolev inequalities can also be found in
\cite[Lemma 2.1]{DuLiu2025}.

These estimates follow by extension to $\R^3$ and the usual
Sobolev and product inequalities. The elliptic estimates needed below
are recorded next.

\begin{lemma}\label{lem:poisson}
Let $\Omega=\R^3_+$. If $\phi\in\dot H^1(\Omega)$ solves
$\Delta\phi=q$ and $\partial_3\phi=0$ on $\partial\Omega$, then
\begin{equation}\label{eq:poisson-estimate}
\|\nabla^{m+2}\phi\|\leq C\|\nabla^m q\|,
\qquad m=0,1,2.
\end{equation}
For a time-dependent solution, we also have
\begin{equation}\label{eq:poisson-time}
\|\nabla\phi_t\|_{L^6}\leq C\|q_t\|.
\end{equation}
\end{lemma}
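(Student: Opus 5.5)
The natural route is even reflection across $\{x_3=0\}$, which turns the Neumann problem in the half-space into a Poisson problem in $\R^3$. Let $\tilde q$ and $\tilde\phi$ denote the even extensions of $q$ and $\phi$ in $x_3$. The Neumann condition $\partial_3\phi=0$ implies that $\tilde\phi$ has no jump in its normal derivative across the plane. Hence $\Delta\tilde\phi=\tilde q$ holds in $\mathcal D'(\R^3)$: for a test function $\zeta$, we split $(\tilde\phi,\Delta\zeta)$ into the two half-spaces and integrate by parts twice, and the boundary terms cancel by evenness together with $\partial_3\phi=0$. Moreover, $\tilde\phi\in L^6(\R^3)$ and $\nabla\tilde\phi\in L^2(\R^3)$, so $\tilde\phi$ is the unique $\dot H^1(\R^3)$ solution with $\tilde\phi=\Delta^{-1}\tilde q$. (Any harmonic function that is in $L^6$ vanishes, by Liouville.)

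We can then use the whole-space Fourier identity $\|\nabla^2\tilde\phi\|_{L^2(\R^3)}=\|\Delta\tilde\phi\|_{L^2(\R^3)}$, since $|\xi|^4=\sum_{i,j}\xi_i^2\xi_j^2$. This gives $\|\nabla^2\phi\|\le\|q\|$ up to the factor $2$ coming from reflection, which settles $m=0$.

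For $m=1,2$ there are two options. The first is to apply the same identity to $\partial^\alpha\tilde\phi$, which requires $\partial^\alpha\tilde q\in L^2(\R^3)$. This holds for tangential derivatives. For normal derivatives it fails in general, because $\partial_3\tilde q$ is odd and $\partial_3^2\tilde q$ may carry a jump $\delta$-term when $\partial_3q\ne0$ on the boundary.

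I would therefore treat $m=1,2$ differently. For tangential derivatives, note that $\partial_y^\beta\phi$ solves the same Neumann problem with data $\partial_y^\beta q$, so the $m=0$ bound gives $\|\nabla^2\partial_y^\beta\phi\|\le C\|\partial_y^\beta q\|$. Normal derivatives are recovered from the equation, $\partial_3^2\phi=q-\Delta_y\phi$, by induction on the number of normal derivatives. For instance,
\[
\|\partial_3^3\phi\|\le\|\partial_3 q\|+\|\Delta_y\partial_3\phi\|,
\]
and the last term is a mixed derivative containing at most one normal derivative, which is already controlled. For $m=2$, the terms $\partial_3^4\phi$ and $\partial_y\partial_3^3\phi$ are handled the same way. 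To make the induction rigorous one needs $\phi\in H^{m+2}_{\mathrm{loc}}$. This follows from standard Neumann regularity, or more simply from the reflection, using $\nabla\tilde\phi=\nabla\Delta^{-1}\tilde q$ and the fact that $\tilde q\in H^1(\R^3)$ whenever $q\in H^1$ (evenness keeps the extension in $H^1$). This route also gives the case $m=1$ directly. I expect the main obstacle to be this justification of the higher regularity at $m=2$, where the even extension loses $H^2$. The tangential-then-normal induction is what avoids the problem.

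For \eqref{eq:poisson-time}, differentiate in time: $\phi_t\in\dot H^1$ solves $\Delta\phi_t=q_t$ with $\partial_3\phi_t=0$. Reflecting as above and writing $\nabla\tilde\phi_t=\nabla\Delta^{-1}\tilde q_t$, we have $\|\nabla\tilde\phi_t\|_{L^6}\le C\|\nabla^2\tilde\phi_t\|$ by the Sobolev inequality \eqref{eq:sobolev}, and $\|\nabla^2\tilde\phi_t\|\le C\|\tilde q_t\|$ by the $m=0$ identity. Combining these gives $\|\nabla\phi_t\|_{L^6}\le C\|q_t\|$. The same argument applies through difference quotients in $t$ if the time regularity needs justification.
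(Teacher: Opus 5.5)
Your proposal is correct and follows essentially the same route as the paper: even reflection with the whole-space multiplier bound for $m=0$, the same bound for tangential derivatives $\partial_y^\beta\phi$ (which keep the Neumann condition and commute with the even extension), recovery of normal derivatives from $\partial_3^2\phi=q-\Delta_y\phi$, and the $m=0$ case combined with the homogeneous Sobolev inequality for $\phi_t$. Your worry about $m=2$ is resolved as you indicate, since that identity can be differentiated in $\mathcal D'(\Omega)$ and the lemma only involves derivatives in the open half-space; as a minor point, the reflection costs no factor $2$, because both squared norms double.
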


\begin{proof}
Extend $\phi$ evenly across $x_3=0$. Since
$\partial_3\phi=0$ there, the reflected equation has no boundary
source. The $L^2$ boundedness of the Fourier multipliers
$\xi_i\xi_j/|\xi|^2$ yields
$\|\nabla^2\phi\|\leq C\|q\|$.
Tangential derivatives satisfy the same Neumann condition, so
the same argument applies to $\partial_y^\beta\phi$.
The remaining normal derivatives follow from
\[
\partial_3^2\phi=q-\Delta_y\phi.
\]
This proves \eqref{eq:poisson-estimate} without any boundary
condition on $q$. Applying the case $m=0$ to $\phi_t$ and using
the homogeneous Sobolev inequality gives
\[
\|\nabla\phi_t\|_{L^6}
\leq C\|\nabla^2\phi_t\|
\leq C\|q_t\|.
\]
\end{proof}

\begin{lemma}\label{lem:ext-poisson}
Let $\Omega$ be a smooth connected exterior domain. Suppose that $\phi\in\dot H^1(\Omega)$ satisfies
$\Delta\phi=q$ and $\partial_n\phi=0$. Then
\begin{equation}\label{eq:ext-poisson}
\|\nabla^{k+2}\phi\|\leq C\|q\|_{H^k},\qquad k=0,1,2.
\end{equation}
Moreover,
\begin{equation}\label{eq:ext-local-field}
\|\nabla\phi\|_{L^6}+\|\nabla\phi\|_{L^2(K)}
\leq C_K\|q\|.
\end{equation}
For a time-dependent solution,
\begin{equation}\label{eq:ext-poisson-time}
\|\nabla\phi_t\|_{L^6}\leq C\|q_t\|.
\end{equation}
\end{lemma}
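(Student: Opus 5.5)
The plan is to start with the basic $\dot H^1$ bound from the weak formulation, and then obtain the higher estimates by localization. Test $\Delta\phi=q$ with $\zeta\in\dot H^1(\Omega)$ and use $\partial_n\phi=0$:
\[
(\nabla\phi,\nabla\zeta)=-(q,\zeta).
\]
Taking $\zeta=\phi$ and bounding $|(q,\phi)|$ is not immediate, because in an exterior domain $q\in L^2$ does not give $(q,\phi)\leq C\|q\|\|\nabla\phi\|$. I will therefore handle \eqref{eq:ext-local-field} differently. Split $\phi$ with a cutoff $\chi\in C_c^\infty(\R^3)$ equal to $1$ near $\overline{\mathcal O}$. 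For the far part $(1-\chi)\phi$, extended by zero to $\R^3$, we have
\[
\Delta((1-\chi)\phi)=(1-\chi)q-2\nabla\chi\cdot\nabla\phi-\Delta\chi\,\phi .
\]
The whole-space estimate $\|\nabla^2 w\|\leq C\|\Delta w\|$ gives
\[
\|\nabla^2((1-\chi)\phi)\|\leq C\|q\|+C\|\phi\|_{H^1(K)} .
\]
Here $K$ is a bounded set containing $\operatorname{supp}\nabla\chi$. The whole-space Sobolev inequality then controls $\|\nabla((1-\chi)\phi)\|_{L^6}$ by the same right-hand side.

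For the near part, use standard $H^{k+2}$ Neumann regularity on the bounded smooth domain $\Omega\cap B_R$; the outer boundary is handled by interior estimates with a slightly larger ball. It gives
\[
\|\chi\phi\|_{H^{k+2}}\leq C\bigl(\|q\|_{H^k}+\|\phi\|_{H^1(\Omega\cap B_{R'})}\bigr).
\]
Differentiating the far-part equation tangentially/in all directions in $\R^3$ handles $k=1,2$ in the same way. All the estimates are thus reduced to proving the lower-order bound
\[
\|\phi\|_{H^1(K')}\leq C_{K'}\|q\|
\]
for bounded $K'$, modulo the additive constant fixed by $\phi\in L^6$.

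The key step, and the main obstacle, is this lower-order bound. I would prove it by compactness and contradiction. Suppose $\|q_j\|\to0$ while $\|\phi_j\|_{H^1(K')}=1$, with $\phi_j\in\dot H^1$. The estimates above give uniform bounds on $\|\nabla^2\phi_j\|$ and $\|\nabla\phi_j\|_{L^6}$; the $L^6$ norm of $\phi_j$ is controlled by $\|\nabla\phi_j\|$ only if that is finite. To get an a priori bound on $\|\nabla\phi_j\|_{L^6}$ without circularity, use instead the representation via the Neumann problem. The Neumann operator from $\{g\in L^{6/5}\}$ is bounded, but $q\in L^2$ is not in $L^{6/5}$. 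So the right formulation is the following. The map $q\mapsto\nabla\phi$ is bounded from $L^2$ into $\{\nabla\phi:\nabla^2\phi\in L^2\}$, with
\[
\|\nabla\phi\|_{L^6}\leq C\|\nabla^2\phi\|+C\|\phi\|_{H^1(K)} .
\]
One then shows that the limit of the contradiction sequence is a harmonic $\phi\in\dot H^1$ with $\nabla^2\phi\in L^2$ and $\partial_n\phi=0$. Testing against itself shows such a $\phi$ is constant, hence zero by $L^6$. Rellich compactness on $K'$ then contradicts $\|\phi_j\|_{H^1(K')}=1$.

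The delicate point is justifying the energy identity for the limit, which has only $\nabla\phi\in L^6$. I would use cutoffs $\eta(x/R)$; the boundary term is bounded by
\[
R^{-1}\|\phi\|_{L^6(B_{2R})}\|\nabla\phi\|_{L^6}R^{2}\cdot R^{-1}\cdots ,
\]
which tends to zero using decay of harmonic functions at infinity. A standard alternative is a direct appeal to the exterior Neumann theory, as in \cite[Lemma 2.1]{DuLiu2025}.

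Finally, \eqref{eq:ext-poisson-time} follows by applying the case $k=0$ of \eqref{eq:ext-local-field} to $\phi_t$, which solves $\Delta\phi_t=q_t$ with $\partial_n\phi_t=0$ and $\phi_t\in\dot H^1$.
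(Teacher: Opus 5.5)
\textbf{There is a genuine gap in your key step.} Read literally, with the constant fixed by $\phi\in L^6$ as your ``hence zero by $L^6$'' indicates, the bound $\|\phi\|_{H^1(K')}\le C_{K'}\|q\|$ is false, so no compactness argument can prove it. Take $\psi\in C_c^\infty(\R^3)$ with $\psi=1$ on a ball $B_1\supset\overline{\mathcal O}$, and set $\phi_j(x)=\psi(x/j)$. Then $\phi_j\in\dot H^1(\Omega)$ and $\partial_n\phi_j=0$. Moreover, $\Delta\phi_j=j^{-2}(\Delta\psi)(x/j)$ gives $\|\Delta\phi_j\|_{H^k}\le Cj^{-1/2}\to0$. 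Yet $\phi_j=1$ on $K'$ for large $j$.

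This is exactly where your argument breaks. $\|\phi_j\|_{L^6}$ is not uniformly bounded, so the local limit of your contradiction sequence need not lie in $\dot H^1$; here it is the constant $1$. The $L^6$ normalization is lost in the limit, and ``constant, hence zero'' fails. Your energy-identity sketch has the same defect: it uses $\|\phi\|_{L^6(B_{2R})}$ for a limit that is only known to satisfy $\nabla\phi\in L^6$.

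The repair, which is the route the paper takes, is to normalize on the gradient and work modulo constants.
\begin{itemize}
\item Apply your near and far estimates to $\phi$ minus its mean on a fixed connected neighborhood, so that, by Poincar\'e, only $\|\nabla\phi\|_{L^2(K)}$ enters. This gives $\|\nabla^2\phi\|\le C(\|q\|+\|\nabla\phi\|_{L^2(K)})$.
\item For the contradiction, take $\|\nabla^2\phi_j\|=1$ and $\|\Delta\phi_j\|\to0$. The exterior Sobolev inequality applied to $\nabla\phi_j$ bounds $\nabla\phi_j$ in $L^6$. The mean-subtracted potentials then converge locally to some $h$, determined up to a constant, with $\Delta h=0$, $\partial_nh=0$ and $\nabla h\in L^6$.
\item The missing ingredient is the Liouville step for such an $h$. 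The harmonic expansion outside a ball excludes all growing harmonics, since their gradients are not in $L^6$. Hence $h=c+O(|x|^{-1})$ and $\nabla h=O(|x|^{-2})$, so $h-c\in\dot H^1(\Omega)$. Testing the weak Neumann problem with $h-c$ gives $\nabla h=0$.
\item Local strong convergence of $\nabla\phi_j$ in $L^2(K)$ then contradicts $\|\nabla^2\phi_j\|=1$ via the localized estimate.
\end{itemize}
After this, \eqref{eq:ext-local-field} follows from $\|\nabla\phi\|_{L^6}\le C\|\nabla^2\phi\|$. Your treatment of $k=1,2$ and of $\phi_t$ goes through unchanged once the mean-subtracted version of the local estimates is used.
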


The refined exterior estimate on the exterior of a ball is also given
in \cite[Lemma 2.8]{DuLiu2025}. We give an argument for the
exterior domains considered here.

\begin{proof}
The usual Neumann estimate first gives
$\nabla^2\phi\in L^2$ when $q\in L^2$ and $\nabla\phi\in L^2$.
The divergence--curl identity for the gradient field, together with
$\partial_n\phi=0$, bounds its boundary contribution by a fixed
multiple of $\|\nabla\phi\|_{L^2(\partial\Omega)}^2$.
The local trace inequality, with a small coefficient on
$\|\nabla^2\phi\|^2$, therefore yields
\begin{equation}\label{eq:ext-neumann-compact}
\|\nabla^2\phi\|
\leq C\bigl(\|q\|+\|\nabla\phi\|_{L^2(K)}\bigr),
\end{equation}
where $K$ is a fixed connected bounded truncation containing the
boundary.

We remove the last term by contradiction. If the estimate with
right-hand side $C\|q\|$ fails, choose $\phi_j$ such that
\[
\|\nabla^2\phi_j\|=1,\qquad \|\Delta\phi_j\|\longrightarrow0.
\]
By \eqref{eq:ext-local-sobolev}, $\nabla\phi_j$ is bounded in $L^6$.
It is bounded in $H^1$ on every bounded truncation, including its
physical boundary. After passage to a subsequence, it converges
strongly in local $L^2$ and weakly in local $H^1$.
Subtracting the means of the potentials on the same connected
truncation, and then enlarging the truncation, shows that the limit
is the gradient of a globally defined function $h$. The weak
Neumann equation passes to the limit, so
\[
\Delta h=0,\qquad \partial_nh=0,\qquad \nabla h\in L^6(\Omega).
\]
The harmonic expansion outside a ball excludes all nonconstant
growing terms, since their gradients do not belong to $L^6$.
Consequently, $h=c+O(|x|^{-1})$ and
$\nabla h=O(|x|^{-2})$ at infinity. Thus
$h-c\in\dot H^1(\Omega)$, and testing the homogeneous Neumann
equation by $h-c$ gives $\nabla h=0$.
The local strong convergence and \eqref{eq:ext-neumann-compact}
now contradict $\|\nabla^2\phi_j\|=1$.
This proves \eqref{eq:ext-poisson} for $k=0$.
Equation \eqref{eq:ext-local-field} follows from
\eqref{eq:ext-local-sobolev}.

For higher derivatives, use local Neumann regularity near the
boundary. On each fixed connected neighborhood, subtract the mean
of $\phi$; its local $L^2$ norm is then bounded by
\eqref{eq:ext-local-field} and Poincar\'e's inequality.
Away from the obstacle, apply the whole-space homogeneous elliptic
estimate to a cutoff of this mean-subtracted potential.
All derivatives of the cutoff have fixed compact support and are
controlled by the estimates already obtained. This gives
\eqref{eq:ext-poisson} for $k=1,2$. Applying the case $k=0$ to
$\phi_t$ proves \eqref{eq:ext-poisson-time}.
\end{proof}

\begin{lemma}\label{lem:elliptic}
In the half-space, for $z=0$ on $\partial\Omega$, the Lam\'e operator satisfies
\begin{equation}\label{eq:lame-estimate}
\|\nabla^2z\|\leq C\|Az\|,
\qquad
\|\nabla^3z\|\leq C\|\nabla Az\|.
\end{equation}
If $(z,\pi)$ solves
\begin{equation}\label{eq:stokes-problem}
-\mu\Delta z+\nabla\pi=g,\qquad
\Div z=h,\qquad z|_{\partial\Omega}=0,
\end{equation}
then
\begin{equation}\label{eq:stokes-estimate}
\|\nabla^{m+2}z\|+\|\nabla^{m+1}\pi\|
\leq C\bigl(\|\nabla^m g\|+\|\nabla^{m+1}h\|\bigr),
\qquad m=0,1,2.
\end{equation}
In a smooth exterior domain, the Dirichlet Lam\'e estimates are
\begin{equation}\label{eq:ext-lame}
\|\nabla^2z\|\leq C(\|Az\|+\|\nabla z\|),
\end{equation}
and
\begin{equation}\label{eq:ext-lame-energy}
\|z\|_{H^3}\leq C(\|Az\|_{H^1}+\|z\|).
\end{equation}
The half-space estimates are understood in the corresponding homogeneous
Sobolev spaces, and $\pi$ is determined up to a constant.
\end{lemma}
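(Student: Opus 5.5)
The statement is Lemma~\ref{lem:elliptic}, and I would prove it by reduction to classical results rather than from scratch, in three steps.

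\emph{Step 1: half-space Lam\'e estimate.} For \eqref{eq:lame-estimate} I would first use the Fourier transform in $y$. This reduces $Az=f$ with $z(0)=0$ to a constant-coefficient ODE system in $x_3$ for each frequency $\xi'$. The system is strongly elliptic, since $\mu>0$ and $2\mu+\lambda>0$ follow from \eqref{eq:viscosity}, and its Dirichlet condition satisfies the Lopatinskii--Shapiro condition. Agmon--Douglis--Nirenberg theory therefore gives homogeneous estimates $\|\nabla^2 z\|\le C\|Az\|$ without lower-order terms; the absence of lower-order terms comes from scaling invariance of the half-space. A cheaper route I would write out goes through the energy identity. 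Take $(Az,-\Delta_y z)$ and integrate by parts in $y$; this is allowed because $\partial_y z=0$ on the boundary. It gives $a(\nabla_y z,\nabla_y z)=(Az,-\Delta_y z)$, hence
\[
\|\nabla\nabla_y z\|\le C\|Az\|.
\]
The remaining derivative $\partial_3^2 z$ is recovered from the equation. The third component gives $(2\mu+\lambda)\partial_3^2 z_3=-(Az)_3+\text{(terms with a tangential derivative)}$. The first two components give $\mu\partial_3^2 z_y=-(Az)_y+\dots$. The second estimate in \eqref{eq:lame-estimate} follows by applying the first to $\partial_y z$, which still vanishes on $\partial\Omega$. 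The normal derivative $\partial_3\nabla^2 z$ is then handled once more through the equation, differentiated once.

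\emph{Step 2: Stokes estimate.} For \eqref{eq:stokes-estimate} I would invoke the known half-space Stokes theory (Cattabriga, Galdi) in homogeneous spaces. To explain the structure I would follow the same pattern as Step 1. Tangential derivatives are estimated by testing with $-\Delta_y z$; the pressure term is integrated as $(\nabla\pi,\Delta_y z)=-(\nabla_y\pi,\nabla_y\Div z)$ using the trace condition. Normal derivatives come from $\partial_3 z_3=h-\Div_y z_y$, from $\partial_3\pi=g_3+\mu\Delta z_3$, and finally from $\mu\partial_3^2 z_y=\nabla_y\pi-g_y-\mu\Delta_y z_y$. Higher $m$ follows by induction on the number of normal derivatives. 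The pressure is fixed up to a constant because only $\nabla\pi$ enters.

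\emph{Step 3: exterior domains.} For \eqref{eq:ext-lame} and \eqref{eq:ext-lame-energy} I would use a partition of unity. Near $\partial\Omega$, flatten the boundary and apply the local ADN/Dirichlet estimates; the commutators involve only lower-order derivatives on compact sets. Away from the obstacle, apply the whole-space estimate $\|\nabla^2(\chi z)\|\le C\|A(\chi z)\|$, whose cutoff errors are bounded by $\|\nabla z\|$ and $\|z\|$ on a fixed annulus. This yields the lower-order terms $\|\nabla z\|$ in \eqref{eq:ext-lame} and $\|z\|$ in \eqref{eq:ext-lame-energy}; to pass from $\|\nabla z\|$ to $\|z\|$ in the $H^3$ estimate I would use interpolation $\|\nabla z\|\le \epsilon\|\nabla^2 z\|+C_\epsilon\|z\|$.

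The main obstacle is keeping the half-space estimates homogeneous, without $\|z\|$ on the right, for the Lam\'e system with the $\nabla\Div$ coupling. I expect the $x_3$-normal bookkeeping above to handle this, with the classical ADN theory as backup.
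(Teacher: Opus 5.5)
Your Steps 1 and 3 follow the paper's proof. You test $Az$ with $-\Delta_y z$ to get $\|\nabla\nabla_y z\|\leq C\|Az\|$, recover $\partial_3^2 z$ from the component equations with normal coefficients $\mu$ and $2\mu+\lambda$, apply this to $\partial_y z$ for the third-order bound, and localize in the exterior domain. For \eqref{eq:stokes-estimate} you rely, as the paper does, on the classical half-space Stokes theory in homogeneous spaces. Two points need care.

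First, the energy sketch you give for the Stokes estimate does not close, so it cannot replace the citation. Testing with $-\Delta_y z$ produces the pressure term $(\nabla_y\pi,\nabla_y h)$, up to sign. Your recovery of $\partial_3^2 z_y$ also uses $\nabla_y\pi$. Nothing in the sketch bounds $\nabla_y\pi$. The tangential components of the momentum equation control only the combination $\nabla_y\pi-\mu\partial_3^2 z_y$, so even for $h=0$ neither term is determined.

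The missing ingredient is a separate pressure estimate. For example, $P=\pi-\mu h$ satisfies $\Delta P=\Div g$. Subtract a potential $P_1$ with $\nabla P_1=\QQ g$, so $\|\nabla P_1\|\leq\|g\|$. What remains, $P_2=P-P_1$, is harmonic in $\R^3_+$ with $\nabla P_2\in L^2$. Its gradient has tangential Fourier transform of the form $a(\xi')e^{-|\xi'|x_3}$, which gives $\|\nabla_y P_2\|=\|\partial_3 P_2\|$.

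Since $\partial_3\pi=g_3+\mu\Delta z_3$ is already bounded by $\|g\|+\|\nabla h\|+\|\nabla\nabla_y z\|$, Young's inequality then closes the argument. This is what the tangential Fourier/ODE analysis cited in the paper supplies. So the real difficulty in this lemma is the tangential pressure gradient, not the homogeneity of the Lam\'e estimate, which your own Step 1 already handles.

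Second, in Step 3 the local Dirichlet estimates and the cutoff errors produce $\|z\|_{L^2(K)}$ on a bounded set $K$. To obtain \eqref{eq:ext-lame} with only $\|\nabla z\|$ on the right, you must remove this term. You can do so with \eqref{eq:ext-local-sobolev}, as the paper does, or with a Poincar\'e inequality using $z=0$ on $\partial\Omega$. This matters because \eqref{eq:ext-lame} is later applied to $u_t$, whose $L^2$ norm is not part of the dissipation.
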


\begin{proof}
Since the tangential derivatives of $z$ vanish on the boundary,
\[
(Az,-\Delta_yz)
=\sum_{i=1}^2a(\partial_i z,\partial_i z).
\]
It follows that $\|\nabla\nabla_y z\|\leq C\|Az\|$.
The component equations for $Az$ then give the estimate for
$\partial_3^2z$, since their normal coefficients are $\mu$ and
$2\mu+\lambda$. Applying this argument to the tangential
derivatives of $z$, and differentiating the component equations
once in $x_3$, proves the second estimate in
\eqref{eq:lame-estimate}.

The estimate \eqref{eq:stokes-estimate} is the homogeneous
Dirichlet estimate for the Stokes system in the half-space; see
\cite[Chapter VI]{Galdi2011}. It follows by taking the
Fourier transform in the tangential variables and estimating the
decaying solutions of the resulting ordinary differential system.
Tangential differentiation preserves the zero boundary value.
The remaining normal derivatives are recovered from the component
equations and the divergence equation. This gives the estimates for
$m=0,1,2$ in the homogeneous norms stated above.
For \eqref{eq:ext-lame}, apply local Dirichlet elliptic regularity near
the boundary and the whole-space estimate after a cutoff away from it.
The local zero-order terms are bounded by \eqref{eq:ext-local-sobolev};
the commutators contain at most first derivatives of $z$.
The corresponding estimate with one more derivative gives
\eqref{eq:ext-lame-energy}.
\end{proof}

\begin{lemma}\label{lem:ext-stokes}
For the Stokes problem \eqref{eq:stokes-problem} in the exterior
domain, a fixed connected bounded neighborhood $K$ of the boundary
and the cutoff regions can be chosen so that
\begin{equation}\label{eq:ext-stokes-localized}
\begin{aligned}
\|\nabla^4z\|+\|\nabla^3\pi\|
\leq C\bigl(&\|\nabla^2g\|+\|\nabla^3h\|
+\|g\|_{H^1(K)}+\|h\|_{H^2(K)}\\
&+\|\nabla z\|_{H^2}+\|\nabla\pi\|_{H^1(K)}\bigr).
\end{aligned}
\end{equation}
\end{lemma}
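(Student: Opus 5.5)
We prove \eqref{eq:ext-stokes-localized} by a partition of unity that separates a neighborhood of the obstacle from a neighborhood of infinity. Choose radii $R_0<R_1<R_2$ with $\overline{\mathcal O}\subset B_{R_0}$, and let $K=\Omega\cap B_{R_2+1}$, which is connected since $\Omega$ is connected and $R_2$ may be enlarged. Take a cutoff $\chi\in C_c^\infty(B_{R_2})$ with $\chi=1$ on $B_{R_1}$, and write $z=\chi z+(1-\chi)z$, $\pi=\chi\pi+(1-\chi)\pi$. Because $\pi$ is determined only up to a constant, in the local piece we replace $\pi$ by $\pi-\bar\pi_K$, where $\bar\pi_K$ is its mean over $K$. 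Poincar\'e's inequality on the connected set $K$ then bounds $\|\pi-\bar\pi_K\|_{L^2(K)}$ by $\|\nabla\pi\|_{L^2(K)}$. This normalization is what allows only $\|\nabla\pi\|_{H^1(K)}$ to appear on the right.

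\emph{Far-field piece.} The pair $(w,\sigma)=((1-\chi)z,(1-\chi)(\pi-\bar\pi_K))$ extends by zero to $\R^3$ and solves
\[
-\mu\Delta w+\nabla\sigma=(1-\chi)g+2\mu\nabla\chi\cdot\nabla z+\mu z\Delta\chi-(\pi-\bar\pi_K)\nabla\chi,
\qquad \Div w=(1-\chi)h-\nabla\chi\cdot z .
\]
We apply the homogeneous whole-space Stokes estimate at the level $\|\nabla^4w\|+\|\nabla^3\sigma\|\le C(\|\nabla^2 G\|+\|\nabla^3 H\|)$, which follows from Fourier multipliers. Every commutator is supported in the annulus $\operatorname{supp}\nabla\chi\subset K$. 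The terms $\nabla^2(\nabla\chi\cdot\nabla z)$ and $\nabla^3(\nabla\chi\cdot z)$ are bounded by $\|\nabla z\|_{H^2}+\|z\|_{L^2(K)}$, and \eqref{eq:ext-local-sobolev} gives $\|z\|_{L^2(K)}\le C\|\nabla z\|$. The pressure commutator $\nabla^2((\pi-\bar\pi_K)\nabla\chi)$ is bounded by $\|\nabla\pi\|_{H^1(K)}$ after the Poincar\'e step. The terms $\nabla^2((1-\chi)g)$ and $\nabla^3((1-\chi)h)$ produce $\|\nabla^2 g\|$, $\|\nabla^3 h\|$, plus lower-order pieces on $K$ that are controlled by $\|g\|_{H^1(K)}$ and $\|h\|_{H^2(K)}$.

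\emph{Near-boundary piece.} The pair $(\chi z,\chi(\pi-\bar\pi_K))$ solves a Stokes system on the bounded smooth domain $\Omega\cap B_{R_2}$. It has zero Dirichlet data on $\partial\Omega$, vanishes near $\partial B_{R_2}$, and has right-hand sides of the same form with $\chi$ in place of $1-\chi$. The compatibility condition $\int\Div(\chi z)=0$ holds automatically, since $\chi z$ vanishes on the whole boundary of this bounded domain. We invoke the standard bounded-domain estimate of Cattabriga/Agmon--Douglis--Nirenberg type (see \cite[Chapter IV]{Galdi2011}),
\[
\|\chi z\|_{H^4}+\|\nabla(\chi\pi)\|_{H^2}\le C\bigl(\|G_{\rm loc}\|_{H^2}+\|H_{\rm loc}\|_{H^3}\bigr),
\]
where $G_{\rm loc}$ and $H_{\rm loc}$ denote the localized right-hand sides. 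The data $\chi g$ and $\chi h$ are estimated by $\|\nabla^2g\|+\|g\|_{H^1(K)}$ and $\|\nabla^3h\|+\|h\|_{H^2(K)}$ respectively. The commutators are handled exactly as in the far-field piece. Adding the two pieces yields \eqref{eq:ext-stokes-localized}.

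The main obstacle is keeping full (non-localized) norms out of the right-hand side at low order. Both pieces generate zero-order terms in $z$ and $\pi$ on the annulus. For $z$ this is resolved by \eqref{eq:ext-local-sobolev}. For $\pi$ it requires the mean subtraction on a connected $K$, and this is the reason the lemma asks that $K$ and the cutoff regions be chosen appropriately. One must also check that the bounded-domain estimate is applied with the data norms in inhomogeneous form, $\|G_{\rm loc}\|_{H^2}$, so that the low-order parts of the localized data are supported in $K$. The remaining step is routine: verify that every lower-order derivative of $g$ or $h$ produced by the Leibniz rule carries a factor of some derivative of $\chi$ and is therefore supported in $K$.
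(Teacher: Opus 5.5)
Your proposal is correct and follows essentially the same route as the paper. A cutoff splits the problem into a near-boundary piece, treated by bounded-domain Dirichlet--Stokes estimates, and a far-field piece, treated by the whole-space Fourier estimate. The pressure is normalized by its mean on a fixed connected neighborhood, so that only $\|\nabla\pi\|_{H^1(K)}$ appears, and the zero-order velocity terms are controlled by \eqref{eq:ext-local-sobolev}. Your explicit check of the compatibility condition $\int\Div(\chi z)\dd x=0$ for the bounded-domain estimate is a detail the paper leaves implicit.
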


\begin{proof}
Near the boundary use the local Dirichlet--Stokes estimates on
nested smooth neighborhoods; see \cite[Chapters IV--V]{Galdi2011}.
Their highest data norms are
$\|\nabla^2g\|$ and $\|\nabla^3h\|$; the remaining norms are
local. The zero-order velocity norm is bounded by
\eqref{eq:ext-local-sobolev}. Subtracting the mean of $\pi$ on
a fixed connected neighborhood bounds its local $H^2$ norm by
$\|\nabla\pi\|_{H^1(K)}$.

For the far field choose a cutoff $\chi$ which vanishes near the
obstacle and is one outside a larger ball. If $c$ is the mean of
$\pi$ on the fixed neighborhood, the cutoff equations are
\[
\begin{aligned}
-\mu\Delta(\chi z)+\nabla[\chi(\pi-c)]
&=\chi g-2\mu\nabla\chi\cdot\nabla z
-\mu(\Delta\chi)z+(\pi-c)\nabla\chi,\\
\Div(\chi z)&=\chi h+\nabla\chi\cdot z.
\end{aligned}
\]
The whole-space Fourier estimate controls fourth velocity and
third pressure derivatives by two derivatives of the first
right-hand side and three derivatives of the divergence data.
All cutoff terms have fixed compact support and are bounded by
the local norms displayed in \eqref{eq:ext-stokes-localized}.
Only pressure derivatives are estimated; no global $L^2$ norm of
$\pi$ or of $\pi-c$ is required.

\end{proof}

We shall also need estimates involving negative derivatives in the
tangential variables. The restriction to these variables preserves the
boundary conditions.

\begin{lemma}\label{lem:tangential-negative}
Let $\Omega=\R^3_+$ and $\frac12\leq s<1$. For $f\in H^1(\Omega)$ and $g\in L^2(\Omega)$,
\begin{equation}\label{eq:negative-product}
\|\Lambda_y^{-s}(fg)\|
\leq C_s\|f\|^{s-\frac12}
\|\nabla f\|^{\frac32-s}\|g\|.
\end{equation}
Moreover, whenever the norms on the right-hand side are finite,
\begin{equation}\label{eq:tangential-interpolation}
\|f\|\leq
\|\Lambda_y^{-s}f\|^{1/(1+s)}
\|\nabla_y f\|^{s/(1+s)}.
\end{equation}
\end{lemma}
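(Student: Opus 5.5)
Both estimates reduce, by Plancherel in $y\in\R^2$, to inequalities for each fixed $x_3>0$. We then integrate in $x_3$ with Hölder's inequality.

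\emph{The interpolation inequality \eqref{eq:tangential-interpolation}.} Write $\hat f(\xi',x_3)$ for the partial Fourier transform. Hölder's inequality in $(\xi',x_3)$ with exponents $\frac{1+s}{s}$ and $1+s$ applies to the splitting
\[
|\hat f|^2=\bigl(|\xi'|^{-2s}|\hat f|^2\bigr)^{1/(1+s)}\bigl(|\xi'|^{2}|\hat f|^2\bigr)^{s/(1+s)},
\]
since the exponents of $|\xi'|$ cancel: $-2s/(1+s)+2s/(1+s)=0$. This gives
\[
\|f\|^2\le\|\Lambda_y^{-s}f\|^{2/(1+s)}\|\nabla_yf\|^{2s/(1+s)}.
\]
Taking square roots yields the claim with constant one.

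\emph{The product estimate \eqref{eq:negative-product}.} For $0<s<1$ in two dimensions, $\Lambda_y^{-s}$ is the Riesz potential of order $s$. The Hardy--Littlewood--Sobolev inequality therefore gives, for each $x_3$,
\[
\|\Lambda_y^{-s}h(\cdot,x_3)\|_{L^2(\R^2)}\le C_s\|h(\cdot,x_3)\|_{L^{p}(\R^2)},\qquad \frac1p=\frac12+\frac s2 .
\]
We take $h=fg$ and apply Hölder in $y$:
\[
\|fg\|_{L^p_y}\le\|f\|_{L^{2/s}_y}\|g\|_{L^2_y}.
\]
Squaring and integrating in $x_3$, then taking the supremum in $x_3$ of the $f$ factor, we get
\[
\|\Lambda_y^{-s}(fg)\|\le C_s\sup_{x_3}\|f(\cdot,x_3)\|_{L^{2/s}_y}\,\|g\|.
\]
Since $s\ge\frac12$, we have $2/s\le4$, so $L^{2/s}_y\hookrightarrow$ is reached by interpolation between $L^2$ and $L^6$ of the whole space. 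This suggests a different order: bound instead by the mixed norm $L^\infty_{x_3}L^{2/s}_y$, using Gagliardo--Nirenberg in $y$ and the one-dimensional trace bound in $x_3$.

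Concretely, by the 2D Gagliardo--Nirenberg inequality,
\[
\|f\|_{L^{2/s}_y}\le C\|f\|_{L^2_y}^{s}\|\nabla_yf\|_{L^2_y}^{1-s}.
\]
By the 1D bound,
\[
\sup_{x_3}\|F(\cdot,x_3)\|_{L^2_y}^2\le 2\|F\|\,\|\partial_3F\|,
\]
which holds on $\R_+$ for $F\in H^1$. The difficulty is that $\sup_{x_3}$ does not distribute over products of powers. I would avoid this by estimating $\|f\|_{L^{2/s}_y}$ pointwise in $x_3$ directly through the anisotropic embedding. The mixed norm $L^\infty_{x_3}L^{r}_y$ with $r=2/s$ scales like $\dot H^{\sigma}(\R^3)$ with $\sigma=\frac12+(1-s)=\frac32-s\in(\frac12,1]$. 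One then needs
\[
\|f\|_{L^\infty_{x_3}L^{r}_y}\le C\|f\|^{s-\frac12}\|\nabla f\|^{\frac32-s},
\]
which matches the claimed exponents exactly.

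To prove it, extend $f$ evenly to $\R^3$, which preserves the $L^2$ and $\dot H^1$ norms up to a factor. Apply the 2D Sobolev embedding $\dot H^{1-s}_y\hookrightarrow L^{2/s}_y$ for each $x_3$, which gives
\[
\|f\|_{L^\infty_{x_3}L^r_y}\le C\sup_{x_3}\|\Lambda_y^{1-s}f(\cdot,x_3)\|_{L^2_y}.
\]
The right side is bounded by $\|\,|\xi'|^{1-s}\hat f(\xi',\cdot)\|_{L^2_{\xi'}L^\infty_{x_3}}$. The 1D bound gives
\[
\|v\|_{L^\infty}^2\le\|v\|\,\|\partial_3v\|\le C\int|\hat v(\xi_3)|^2|\xi_3|\,d\xi_3 .
\]
Hence
\[
\|f\|_{L^\infty_{x_3}L^r_y}^2\le C\int|\xi'|^{2(1-s)}|\xi_3|\,|\hat f(\xi)|^2\,d\xi\le C\|f\|^2_{\dot H^{3/2-s}(\R^3)},
\]
using $|\xi'|^{2-2s}|\xi_3|\le|\xi|^{3-2s}$. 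Interpolating $\dot H^{3/2-s}$ between $L^2$ and $\dot H^1$ gives the exponents $s-\frac12$ and $\frac32-s$, valid precisely because $\frac12\le s<1$.

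The main obstacle is this mixed-norm step, namely justifying $\sup_{x_3}$ inside the Fourier integral. I expect to settle it by the 1D estimate applied for each fixed $\xi'$ before integrating in $\xi'$, via Minkowski's inequality.
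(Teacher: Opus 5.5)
Your reduction is the same as the paper's. Plancherel and H\"older with constant one give \eqref{eq:tangential-interpolation}. Two-dimensional Hardy--Littlewood--Sobolev on slices plus H\"older reduce \eqref{eq:negative-product} to the mixed-norm bound $\sup_{x_3}\|f(\cdot,x_3)\|_{L^{2/s}_y}\le C_s\|f\|^{s-\frac12}\|\nabla f\|^{\frac32-s}$.

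The gap is in your proof of this bound. In the chain $\|v\|_{L^\infty}^2\le\|v\|\,\|\partial_3v\|\le C\int|\hat v(\xi_3)|^2|\xi_3|\dd\xi_3$ the second inequality is backwards. Cauchy--Schwarz gives $\int|\hat v|^2|\xi_3|\dd\xi_3\le\|v\|\,\|\partial_3v\|$. The reverse, combined with the first inequality, would give $\dot H^{1/2}(\R)\hookrightarrow L^\infty(\R)$, which is false. Hence your intermediate claim $\|f\|^2_{L^\infty_{x_3}L^{2/s}_y}\le C\int|\xi'|^{2-2s}|\xi_3|\,|\hat f(\xi)|^2\dd\xi$ is also false. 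To see this, test it on $f=a(y)b_k(x_3)$ with $\|b_k\|_{\dot H^{1/2}}$ bounded and $\|b_k\|_{L^\infty}\to\infty$. The embedding $\dot H^{3/2-s}(\R^3)\hookrightarrow L^\infty_{x_3}L^{2/s}_y$ that you are aiming at is true for $s<1$. It cannot be reached through a weight that spends exactly half a derivative in $x_3$, since that is the failing endpoint.

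The obstacle you name is not the real one. Moving $\sup_{x_3}$ inside the $\xi'$-integral is trivial, and $\sup_{x_3}$ does distribute over products in the needed direction. Your Gagliardo--Nirenberg attempt fails only because $\sup_{x_3}\|\nabla_yf\|_{L^2_y}$ requires second derivatives of $f$.

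Your last sentence already points to a correct repair. For each fixed $\xi'$, keep the product form $\sup_{x_3}|\hat f(\xi',x_3)|^2\le2\|\hat f(\xi',\cdot)\|\,\|\partial_3\hat f(\xi',\cdot)\|$ and do not convert it into a fractional norm. Then write
\[
|\xi'|^{2-2s}\|\hat f\|\,\|\partial_3\hat f\|
=\|\hat f\|^{2s-1}\bigl(|\xi'|\,\|\hat f\|\bigr)^{2-2s}\|\partial_3\hat f\|,
\]
and apply H\"older in $\xi'$ with exponents $\frac{2}{2s-1},\frac{2}{2-2s},2$ (Cauchy--Schwarz when $s=\frac12$). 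This gives
\[
\sup_{x_3}\|\Lambda_y^{1-s}f(\cdot,x_3)\|_{L^2_y}^2\le C\|f\|^{2s-1}\|\nabla_yf\|^{2-2s}\|\partial_3f\|.
\]
Your embedding $\dot H^{1-s}(\R^2)\hookrightarrow L^{2/s}(\R^2)$ then finishes the proof, and no even extension is needed.

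The paper takes a different route. It proves the two endpoints $\sup_{x_3}\|f\|_{L^4_y}\le C\|\nabla f\|$ and $\sup_{x_3}\|f\|_{L^2_y}\le C\|f\|^{1/2}\|\partial_3f\|^{1/2}$. The first comes from $|\xi'|\,|\hat f(\xi',x_3)|^2\le\int_{x_3}^\infty(|\xi'|^2|\hat f|^2+|\partial_z\hat f|^2)\dd z$ together with $\dot H^{1/2}(\R^2)\hookrightarrow L^4$. It then interpolates the Lebesgue norms in $y$ slice by slice.
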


\begin{proof}
For smooth $f$ of compact support in $\overline\Omega$, the tangential
Fourier transform gives, for every $x_3\geq0$,
\[
\begin{aligned}
|\xi'|\,|\widehat f(\xi',x_3)|^2
&=-\int_{x_3}^\infty|\xi'|\,
\partial_z|\widehat f(\xi',z)|^2\dd z\\
&\leq\int_{x_3}^\infty
\bigl(|\xi'|^2|\widehat f(\xi',z)|^2
+|\partial_z\widehat f(\xi',z)|^2\bigr)\dd z.
\end{aligned}
\]
After integration in $\xi'$, the two-dimensional homogeneous Sobolev
inequality yields
\[
\sup_{x_3>0}\|f(\cdot,x_3)\|_{L_y^4}
\leq C\|\nabla f\|.
\]
Similarly,
\[
\sup_{x_3>0}\|f(\cdot,x_3)\|_{L_y^2}
\leq C\|f\|^{1/2}\|\partial_3f\|^{1/2}.
\]
Interpolation between these two bounds gives
\[
\|f\|_{L_{x_3}^\infty L_y^{2/s}}
\leq C_s\|f\|^{s-\frac12}\|\nabla f\|^{\frac32-s}.
\]
Applying the two-dimensional Hardy--Littlewood--Sobolev inequality on
each horizontal slice, followed by H\"older's inequality, we obtain
\[
\|\Lambda_y^{-s}(fg)\|
\leq C_s\|fg\|_{L_{x_3}^2L_y^{2/(1+s)}}
\leq C_s\|f\|_{L_{x_3}^\infty L_y^{2/s}}\|g\|.
\]
This proves \eqref{eq:negative-product}; approximation gives the result
for the stated classes of functions. Finally,
\eqref{eq:tangential-interpolation} follows from H\"older's inequality
in the tangential Fourier variables and $x_3$.
\end{proof}

\subsection{Boundary normal coordinates in exterior domains}

\begin{lemma}\label{lem:normal-coordinates}
A collar of $\partial\Omega$ has a finite covering by coordinates
\begin{equation}\label{eq:normal-chart}
x=X_\ell(y,s)=r_\ell(y)+s\nu_\ell(y),\qquad
0\leq s<s_0,
\end{equation}
where $r_\ell$ parametrizes the boundary and $\nu_\ell=-n$ points
into $\Omega$. Put $e_\alpha=\partial_{y_\alpha}X_\ell$,
$g_{\alpha\beta}=e_\alpha\cdot e_\beta$, and
$t^\alpha=g^{\alpha\beta}e_\beta$, where
$(g^{\alpha\beta})=(g_{\alpha\beta})^{-1}$.
Repeated Greek indices are summed over $1,2$. In each chart,
\begin{equation}\label{eq:coordinate-differential}
\begin{aligned}
&\partial_s=\nu\cdot\nabla,\qquad \partial_s\nu=0,
\qquad \nabla=\nu\partial_s+t^\alpha\partial_{y_\alpha},\\
&\Div v=\nu\cdot\partial_sv+t^\alpha\cdot\partial_{y_\alpha}v,\\
&\Delta v=\partial_s^2v+g^{\alpha\beta}
\partial_{y_\alpha y_\beta}v
+(\Div\nu)\partial_sv+(\Div t^\alpha)\partial_{y_\alpha}v.
\end{aligned}
\end{equation}
Vector fields in these identities have physical Cartesian components.
The volume element is $\dd x=J_\ell(y,s)\dd y\dd s$, where
$J_\ell=\sqrt{\det(g_{\alpha\beta})}$ is bounded above and below
by positive constants on the chosen patches.

There are smooth cutoffs $\chi_\ell$ supported in these charts and
$\chi_0$ vanishing near $\partial\Omega$ and equal to one outside
a ball, such that
\begin{equation}\label{eq:coordinate-cutoffs}
\chi_0^2+\sum_\ell\chi_\ell^2\geq c>0,
\qquad \partial_s\chi_\ell(y,0)=0.
\end{equation}
For smooth $v$ and $f$, the finite family
\begin{equation}\label{eq:localized-operators}
L=\chi_\ell\partial_y^\beta\quad(1\leq|\beta|\leq3),
\qquad L=\chi_0\partial^\alpha\quad(1\leq|\alpha|\leq3),
\end{equation}
has $Lv=0$ on $\partial\Omega$ if $v=0$, and
$\partial_n(Lf)=0$ if $\partial_nf=0$.
\end{lemma}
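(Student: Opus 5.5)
The construction is local differential geometry followed by a partition-of-unity argument; I would carry it out in three steps.

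\emph{Step 1: the charts.} Since $\partial\Omega$ is compact and $C^\infty$, the tubular neighbourhood theorem gives some $s_0>0$ for which the map $(p,s)\mapsto p+s\nu(p)$ is a diffeomorphism from $\partial\Omega\times[0,s_0)$ onto a collar. Here $\nu=-n$ is the inward unit normal. I take finitely many boundary parametrizations $r_\ell$ on relatively compact patches and set $X_\ell(y,s)=r_\ell(y)+s\nu_\ell(y)$, so that $\nu_\ell(y)=\nu(r_\ell(y))$.

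\emph{Step 2: the differential identities.} Since $\nu_\ell$ does not depend on $s$, we have $\partial_s\nu=0$ and $\partial_sX_\ell=\nu$. Differentiating $|\nu|^2=1$ gives $\nu\cdot\partial_{y_\alpha}\nu=0$. Together with $\partial_{y_\alpha}r_\ell\perp\nu$, this shows $e_\alpha\perp\nu$. The Jacobian of $X_\ell$ therefore has columns $(e_1,e_2,\nu)$ with $\nu$ orthogonal to the other two, and its inverse gives the gradient decomposition
\[
\nabla=\nu\partial_s+t^\alpha\partial_{y_\alpha}.
\]
Dotting this with $\nu$ gives $\partial_s=\nu\cdot\nabla$, and taking its trace against $v$ gives the formula for $\Div v$. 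For the Laplacian I write $\Delta v=\Div(\nabla v)$ componentwise and apply the decomposition twice:
\[
\Delta v=\Div(\nu\partial_sv)+\Div(t^\alpha\partial_{y_\alpha}v)
=(\Div\nu)\partial_sv+\nu\cdot\nabla\partial_sv+(\Div t^\alpha)\partial_{y_\alpha}v+t^\alpha\cdot\nabla\partial_{y_\alpha}v.
\]
Using the decomposition once more, together with $\nu\cdot t^\alpha=0$ and $t^\alpha\cdot t^\beta=g^{\alpha\beta}$, gives
\[
\nu\cdot\nabla\partial_s=\partial_s^2,\qquad t^\alpha\cdot\nabla\partial_{y_\alpha}=g^{\alpha\beta}\partial_{y_\alpha y_\beta}.
\]
This is the stated formula. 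Since the identities are applied to each Cartesian component separately, the vector fields keep their physical components. The Jacobian determinant is $J_\ell=\det(e_1,e_2,\nu)=\sqrt{\det g}$. At $s=0$ the vectors $e_\alpha=\partial_\alpha r_\ell$ are independent, so after shrinking $s_0$ and the patches $J_\ell$ is bounded above and below.

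\emph{Step 3: the cutoffs.} This is the only delicate step; the main obstacle is ensuring $\partial_s\chi_\ell(y,0)=0$ and keeping the normal-derivative property of $L$. I would take a partition of unity $\eta_\ell(y)$ on $\partial\Omega$ subordinate to the patches and a function $\theta\in C_c^\infty([0,s_0))$ with $\theta=1$ on $[0,s_0/2]$. I then set $\chi_\ell=\eta_\ell(y)\theta(s)$, which is constant in $s$ near $s=0$. Next I choose $\chi_0$ vanishing on the set $\{s<s_0/4\}$ and equal to one off $\{s<s_0/2\}$, hence equal to one outside a large ball. Near the boundary $\sum_\ell\chi_\ell^2\ge c$ because the $\eta_\ell$ sum to one, and elsewhere $\chi_0=1$; this gives the lower bound.

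For $L=\chi_0\partial^\alpha$, the function $Lv$ vanishes identically near $\partial\Omega$, so both boundary properties are trivial. For $L=\chi_\ell\partial_y^\beta$, the derivatives $\partial_y^\beta$ are taken at fixed $s$ and are therefore tangential to the level sets $\{s=\mathrm{const}\}$. If $v=0$ on $\{s=0\}$, then $\partial_y^\beta v=0$ there, and so $Lv=0$.

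For the Neumann property, $\partial_nf=-\partial_sf$ on the boundary. Because $\partial_s$ commutes with $\partial_y^\beta$ in these coordinates, we get $\partial_s(\partial_y^\beta f)=\partial_y^\beta(\partial_sf)=0$ at $s=0$. Hence
\[
\partial_s(Lf)=(\partial_s\chi_\ell)\,\partial_y^\beta f+\chi_\ell\,\partial_y^\beta\partial_sf=0\quad\text{at }s=0,
\]
since $\partial_s\chi_\ell(y,0)=0$.
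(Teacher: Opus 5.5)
Your proof is correct and follows essentially the same route as the paper: tubular-neighbourhood charts, the chain rule with the dual basis $t^\alpha$ (using $e_\alpha\cdot\nu=0$ and $\partial_sX_\ell=\nu$) for the gradient and divergence, and the divergence of the gradient for the Laplacian. The cutoffs are the same tensor products $\eta_\ell(y)\theta(s)$, constant in $s$ near the boundary, and you use the same product-rule computation of $\partial_s(Lf)$ at $s=0$; you simply supply details the paper leaves implicit, such as $t^\alpha\cdot t^\beta=g^{\alpha\beta}$, $J_\ell=\sqrt{\det g}$, and the bound $\sum_\ell\chi_\ell^2\geq k^{-1}\bigl(\sum_\ell\chi_\ell\bigr)^2=k^{-1}$ near the boundary (with $k$ the number of charts) behind your remark that the $\eta_\ell$ sum to one.
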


\begin{proof}
The tubular neighborhood theorem and compactness of the boundary give
\eqref{eq:normal-chart}. Since
$e_\alpha\cdot\nu=0$, $t^\alpha\cdot e_\beta=\delta^\alpha_\beta$
and $\partial_sX_\ell=\nu$, the chain rule gives the gradient and
divergence formulas. Applying the divergence formula to the gradient
gives the Laplacian formula. These identities hold componentwise.

Choose a partition of unity in the surface variables and extend it
constantly along the normal lines near $s=0$. Multiplication by a
cutoff in $s$ that is constant near zero gives the boundary cutoffs.
A further cutoff supported away from the boundary completes the
covering. The cutoffs may be chosen nonnegative, with positive sum,
so their squared sum has a positive lower bound.
For a boundary operator in \eqref{eq:localized-operators},
\[
\partial_s(\chi_\ell\partial_y^\beta f)
=(\partial_s\chi_\ell)\partial_y^\beta f
+\chi_\ell\partial_y^\beta\partial_sf=0\quad(s=0).
\]
The assertion about zero Dirichlet values follows by tangential
differentiation. The operators containing $\chi_0$ vanish near the
boundary.
\end{proof}

The coefficients of the charts and their inverses have bounded
derivatives on each fixed patch. The commutators $[A,L]$, $[L,\nabla]$
and $[L,\Div]$ have orders at most $k+1,k,k$, respectively, when
$L$ has order $k$. The commutator $[L,\Delta]$ has order at most
$k+1$. All these commutators are supported in a fixed bounded set.
For the last commutator, every term contains at least one derivative
of its argument. These facts follow by expanding the product rule
in the Cartesian representation of $L$.

\subsection{Whole-space negative-order estimates}

\begin{lemma}\label{lem:ext-negative}
Let $\Lambda=(-\Delta)^{1/2}$ on $\R^3$ and $0<s<\frac14$.
For $f\in H^1(\R^3)$,
\begin{equation}\label{eq:ext-negative-local}
\|\Lambda^{-2s}f\|_{L^{6/(1-4s)}(\R^3)}
\leq C_s\|\nabla f\|_{L^2(\R^3)}.
\end{equation}
For every fixed bounded measurable set $\mathcal A$,
\begin{equation}\label{eq:ext-negative-annulus}
\|\Lambda^{-2s}f\|_{L^2(\mathcal A)}
\leq C_{s,\mathcal A}\|\nabla f\|_{L^2(\R^3)}.
\end{equation}
In addition,
\begin{align}
\|\nabla\Lambda^{-2s}f\|_{L^2}
&\leq C_s\|f\|_{H^1},\label{eq:ext-negative-density}\\
\|\nabla\Lambda^{-2s}f\|_{L^{6/(3-4s)}}
&\leq C_s\|\nabla f\|_{L^2},\label{eq:ext-negative-gradient}\\
\|f\|_{L^2}
&\leq\|\Lambda^{-s}f\|_{L^2}^{1/(1+s)}
\|\nabla f\|_{L^2}^{s/(1+s)},\label{eq:ext-negative-interpolation}
\end{align}
whenever the right-hand sides are finite.
\end{lemma}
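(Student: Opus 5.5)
All five inequalities are standard consequences of the Hardy--Littlewood--Sobolev (HLS) inequality, the homogeneous Sobolev embedding on $\R^3$, and Plancherel's theorem. I would prove each one for $f$ in the Schwartz class and conclude by density.

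For \eqref{eq:ext-negative-local}, write $\Lambda^{-2s}f=\Lambda^{-2s}\Lambda^{-1}(\Lambda f)$. The operator $\Lambda^{-(1+2s)}$ is the Riesz potential of order $1+2s<3$. The HLS inequality $\|\Lambda^{-\sigma}g\|_{L^r}\le C\|g\|_{L^2}$ holds with $1/r=1/2-\sigma/3$. Taking $\sigma=1+2s$ gives $1/r=(1-4s)/6$, which is positive precisely because $s<\frac14$. Since $\|\Lambda f\|=\|\nabla f\|$, this yields the claim.

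Estimate \eqref{eq:ext-negative-annulus} then follows from H\"older's inequality on $\mathcal A$:
\[
\|\Lambda^{-2s}f\|_{L^2(\mathcal A)}\le|\mathcal A|^{\frac12-\frac{1-4s}{6}}\,\|\Lambda^{-2s}f\|_{L^{6/(1-4s)}}.
\]
The exponent is nonnegative because $6/(1-4s)>2$.

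For \eqref{eq:ext-negative-gradient}, note that $\nabla\Lambda^{-2s}f=\Lambda^{-2s}\nabla f$. Applying HLS with $\sigma=2s$ gives $1/r=1/2-2s/3=(3-4s)/6$, as required.

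For \eqref{eq:ext-negative-density}, use Plancherel and split into low and high frequencies. The multiplier $|\xi|^{1-2s}$ is at most $1$ for $|\xi|\le1$ and at most $|\xi|$ for $|\xi|\ge1$. Hence $\|\nabla\Lambda^{-2s}f\|\le\|f\|+\|\nabla f\|$.

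For \eqref{eq:ext-negative-interpolation}, write $|\hat f|^2=(|\xi|^{-2s}|\hat f|^2)^{1/(1+s)}(|\xi|^2|\hat f|^2)^{s/(1+s)}$. The powers of $|\xi|$ cancel, since $-2s/(1+s)+2s/(1+s)=0$. H\"older's inequality in $\xi$ with exponents $1+s$ and $(1+s)/s$ then gives the bound with constant one. This is the same argument used for \eqref{eq:tangential-interpolation}.

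The only genuinely delicate point is justifying the statements "whenever finite" for general $f\in H^1(\R^3)$, where $\Lambda^{-2s}f$ must be defined. For $f\in H^1$, $\hat f\in L^2$, and $|\xi|^{-2s}\hat f$ is locally integrable near $0$ because $2s<3/2$. So $\Lambda^{-2s}f$ is a tempered distribution. The HLS bounds, proved on a dense class, then identify it with the $L^r$ limit.
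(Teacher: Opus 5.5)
Your proposal is correct and follows the paper's proof essentially step by step. The paper uses the same tools: the order-$(1+2s)$ homogeneous Sobolev (HLS) bound for \eqref{eq:ext-negative-local}, H\"older on $\mathcal A$, HLS applied to $\Lambda^{-2s}\nabla f$, the multiplier bound $|\xi|^{2-4s}\le C_s(1+|\xi|^2)$ (your low/high-frequency split), and H\"older in Fourier space for the interpolation. Your added justification of $\Lambda^{-2s}f$ for $f\in H^1$, via local integrability of $|\xi|^{-2s}\hat f$ and density, is a point the paper leaves implicit.
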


\begin{proof}
The homogeneous Sobolev inequality of order $1+2s<3/2$ gives
\[
\|\Lambda^{-2s}f\|_{L^{6/(1-4s)}}
\leq C_s\|\Lambda^{1+2s}\Lambda^{-2s}f\|_{L^2}
=C_s\|\nabla f\|_{L^2}.
\]
H\"older's inequality on $\mathcal A$ proves
\eqref{eq:ext-negative-annulus}. The bound
$|\xi|^{2-4s}\leq C_s(1+|\xi|^2)$ proves
\eqref{eq:ext-negative-density}. The Hardy--Littlewood--Sobolev
inequality applied to $\Lambda^{-2s}\nabla f$ gives
\eqref{eq:ext-negative-gradient}. Finally, write
\[
|\widehat f|^2
=\bigl(|\xi|^{-2s}|\widehat f|^2\bigr)^{1/(1+s)}
\bigl(|\xi|^2|\widehat f|^2\bigr)^{s/(1+s)}
\]
and apply H\"older's inequality in Fourier space.
\end{proof}

\subsection{Perturbation equations and local solutions}

Dividing the momentum equation by $1+q$, we rewrite
\eqref{eq:NSP} as
\begin{equation}\label{eq:perturbation}
\left\{
\begin{aligned}
&q_t+\Div u=-\Div(qu),\\
&u_t+Au+\gamma\nabla q-\nabla\phi=F,\\
&\Delta\phi=q,
\end{aligned}
\right.
\end{equation}
where
\begin{equation}\label{eq:F}
F=-u\cdot\nabla u+\frac{q}{1+q}Au
-\gamma\bigl((1+q)^{\gamma-2}-1\bigr)\nabla q.
\end{equation}
The continuity and Poisson equations imply
\begin{equation}\label{eq:field-time}
\nabla\phi_t=-\QQ((1+q)u).
\end{equation}
Indeed, in the duality between $\dot H^1(\Omega)$ and its dual,
\[
(\nabla\phi_t,\nabla\zeta)
=-\langle q_t,\zeta\rangle
=-((1+q)u,\nabla\zeta),
\]
for $\zeta\in\dot H^1(\Omega)$. Here the boundary term vanishes since $u=0$ on $\partial\Omega$.
This also gives
\begin{equation}\label{eq:field-time-bound}
\|\nabla\phi_t\|\leq C\|u\|,
\end{equation}
whenever $\|q\|_{L^\infty}\leq1/2$.

The local existence and continuation result is stated next.

\begin{lemma}\label{lem:local}
Let $\Omega$ be either domain described in Section~\ref{sec:introduction}.
Let $q_0\in H^3$, $u_0\in H^3\cap H^1_0$, and let
$\phi_0\in\dot H^1(\Omega)$ satisfy \eqref{eq:initial-poisson}.
Assume \eqref{eq:viscosity}, \eqref{eq:compatibility},
$\gamma\geq1$, and $\inf_\Omega(1+q_0)>0$.
Then there exists $T_0>0$ such that the problem
\eqref{eq:NSP}, \eqref{eq:data}, with
$\phi(t)\in\dot H^1(\Omega)$, has a unique strong solution
with the regularity \eqref{eq:regularity} on $[0,T_0]$.
If its maximal existence time $T^*$ is finite, then either
\[
\liminf_{t\uparrow T^*}\inf_{x\in\Omega}(1+q(t,x))=0
\]
or
\[
\sup_{0\leq t<T^*}
\bigl(\|q(t)\|_{H^3}+\|u(t)\|_{H^3}
+\|\nabla\phi(t)\|\bigr)=\infty.
\]
\end{lemma}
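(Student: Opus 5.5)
The plan is a standard linearize-and-iterate argument followed by a blow-up analysis. For fixed $(\tilde q,\tilde u)$ in a closed ball $X_T$, I would solve a linear problem and obtain a solution operator $\Phi$. The ball $X_T$ is taken in the space defined by \eqref{eq:regularity} on $[0,T]$, with norm bounded by $2M$, where $M$ controls $\|q_0\|_{H^3}+\|u_0\|_{H^3}+\|u_t(0)\|_{H^1}$. On it we impose $\inf(1+\tilde q)\ge\frac12\inf(1+q_0)$.

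\textbf{The linear problem.} First, let $\tilde\phi\in\dot H^1$ solve the Neumann problem $\Delta\tilde\phi=\tilde q$. Lemma~\ref{lem:poisson} (half-space) or Lemma~\ref{lem:ext-poisson} (exterior) gives $\nabla\tilde\phi\in C([0,T];H^3)$, with $\|\nabla\tilde\phi\|$ controlled by \eqref{eq:field-time-bound} and $\|\nabla\phi_0\|$. Next, solve the transport equation $q_t+\tilde u\cdot\nabla q=-(1+q)\Div\tilde u$. Since $\tilde u=0$ on $\partial\Omega$, there is no inflow boundary, so characteristics give $q\in C([0,T];H^3)$ via standard commutator estimates \eqref{eq:sobolev-commutator}. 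Finally, solve the linear parabolic Lamé system
\[
(1+q)u_t+Au=-(1+q)\tilde u\cdot\nabla u-\nabla p(1+q)+(1+q)\nabla\tilde\phi,\qquad u|_{\partial\Omega}=0,
\]
by Galerkin approximation. Testing with $u_t$, then differentiating in time and testing with $u_{tt}$ and with $Au_t$, controls $u_t$ in $C(H^1_0)\cap L^2(H^2)$ and $u_{tt}\in L^2(L^2)$. The compatibility condition \eqref{eq:compatibility} makes $u_t(0)\in H^1_0$ finite and consistent with the boundary condition. Spatial regularity $u\in C(H^3)\cap L^2(H^4)$ is then recovered from the elliptic estimates of Lemma~\ref{lem:elliptic}, and of Lemma~\ref{lem:ext-stokes} in the exterior case. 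In those estimates $Au$ is expressed through $u_t$ and lower-order terms.

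\textbf{Existence and uniqueness.} For $T$ small, depending on $M$ and on $\inf(1+q_0)$, $\Phi$ maps $X_T$ into itself. $\Phi$ is a contraction in the weaker norm $C(L^2)$ for $q$ and $C(L^2)\cap L^2(H^1)$ for $u$. The contraction estimate follows by testing the difference equations with the differences. The field difference is bounded through \eqref{eq:field-time-bound} and Lemma~\ref{lem:poisson}/\ref{lem:ext-poisson} at order zero. Weak-$*$ lower semicontinuity passes the high-norm bounds to the limit. Time continuity in $H^3$ follows from the standard energy-equality argument for the transport equation, together with the parabolic structure for $u$. Uniqueness is the same difference estimate applied to two strong solutions with Gronwall, since $\nabla u\in L^1(0,T;L^\infty)$.

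\textbf{Continuation criterion.} The key point is that $T_0$ depends only on upper bounds for $\|q_0\|_{H^3}+\|u_0\|_{H^3}+\|\nabla\phi_0\|$ and on a lower bound for $\inf(1+q_0)$. The quantity $\|u_t(0)\|_{H^1}$ is expressed by the equation in terms of these. If $T^*<\infty$ and neither alternative holds, the solution can be restarted from times near $T^*$ with a uniform existence time. The compatibility condition holds automatically at every later time, since $u_t\in C(H^1_0)$. Restarting therefore extends the solution past $T^*$, a contradiction.

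\textbf{Main obstacle.} I expect the hardest part to be the $H^3$ regularity of $u$ up to the boundary in the exterior domain. Only $\|\nabla^k\pi\|$-type and localized norms are available there (Lemma~\ref{lem:ext-stokes}), and the $L^2$ norm of $u$ is not controlled by a Poincaré inequality. I would handle this by carrying the zero-order term $\|u\|$ through the simple $L^2$ energy estimate, which grows at most linearly in $T$. This is harmless on a short time interval.
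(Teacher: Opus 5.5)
Your overall scheme is the same Matsumura--Nishida iteration and restart argument that the paper invokes. However, the one step the paper's proof actually has to supply, namely how the Poisson equation enters the iteration on an unbounded domain, fails as you set it up.

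You let $\tilde\phi\in\dot H^1(\Omega)$ solve $\Delta\tilde\phi=\tilde q$, $\partial_n\tilde\phi=0$, for an arbitrary $\tilde q$ in the ball $X_T$. On $\R^3_+$ or an exterior domain, a solution with $\nabla\tilde\phi\in L^2$ exists only if $\tilde q$ lies in the dual of $\dot H^1(\Omega)$. This is a low-frequency condition that $H^3$ bounds do not give. For instance, $\tilde q(t)=q_0+tg$, with $g\in H^3$ outside that dual space, lies in $X_T$ for small $t$ but has no such potential for $t>0$.

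Lemmas~\ref{lem:poisson} and~\ref{lem:ext-poisson} are a priori estimates for $\nabla^2\phi$ and higher derivatives of an existing $\dot H^1$ solution. They neither produce $\tilde\phi$ nor bound $\|\nabla\tilde\phi\|$. Your appeal to \eqref{eq:field-time-bound} presupposes that $\tilde q$ satisfies a continuity equation $\tilde q_t+\Div((1+\tilde q)v)=0$ with a known $v\in L^2$, and a generic element of $X_T$ does not.

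The same defect reappears in the contraction step. Testing the momentum difference with $\delta u$ produces $\|\nabla\delta\tilde\phi\|$. On an unbounded domain this cannot be bounded by $\|\delta\tilde q\|$ in your $C(L^2)$ metric, since Lemma~\ref{lem:poisson} at order zero controls only $\nabla^2\delta\tilde\phi$.

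The missing idea is the paper's: define the field along the continuity equation by $\nabla\phi(t)=\nabla\phi_0-\int_0^t\QQ((1+q)u)(s)\dd s$. This field has the following properties.
\begin{itemize}
\item It is an $L^2$ gradient for every pair in the ball, with $\|\nabla\phi(t)\|\leq\|\nabla\phi_0\|+\int_0^t\|(1+q)u\|\dd s$.
\item It is Lipschitz in exactly your weak metric: $\|\nabla\delta\phi(t)\|\leq C\int_0^t(\|\delta q\|+\|\delta u\|)\dd s$.
\item Testing with $\zeta\in\dot H^1(\Omega)$ and using $u|_{\partial\Omega}=0$ shows that it is the weak Neumann solution of $\Delta\phi=q$ whenever $q_t+\Div((1+q)u)=0$.
\end{itemize}

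In your scheme this means reordering the steps. First transport $q$ by $\tilde u$. Then set $\nabla\phi=\nabla\phi_0-\int_0^t\QQ((1+q)\tilde u)\dd s$. Only after that, solve the Lam\'e system. The bound $\|\nabla\phi\|_{H^3}\leq C(\|\nabla\phi\|+\|q\|_{H^2})$ from Lemmas~\ref{lem:poisson} and~\ref{lem:ext-poisson} then gives $\nabla\phi\in C([0,T];H^3)$.

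With this change, the rest of your plan goes through as in the paper: the parabolic estimates, $u_t(0)\in H^1_0$ from \eqref{eq:compatibility}, uniqueness, and the restart argument.
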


\begin{proof}
We use the local iteration for positive-density compressible
viscous flow with the homogeneous Dirichlet condition, as in
\cite{MatsumuraNishida1983}. We indicate how the Poisson equation
is included in this construction. Along the continuity equation,
the electric field is determined by
\[
\nabla\phi(t)=\nabla\phi_0
-\int_0^t\QQ\bigl((1+q)u\bigr)(s)\dd s.
\]
This identity preserves the Poisson equation and the homogeneous
Neumann condition in their weak form. Moreover,
\[
\|\nabla\phi(t)\|
\leq\|\nabla\phi_0\|
+\int_0^t\|(1+q)u\|(s)\dd s,
\qquad
\|\nabla\phi(t)\|_{H^3}
\leq C\bigl(\|\nabla\phi(t)\|+\|q(t)\|_{H^2}\bigr).
\]
The second estimate follows from Lemmas~\ref{lem:poisson}
and~\ref{lem:ext-poisson} in the respective domains.
For two such triples $(q^{(i)},u^{(i)},\phi^{(i)})$, $i=1,2$,
with the same initial data and a common $H^3$ bound for $q^{(i)}$
and $u^{(i)}$, the $L^2$ boundedness of $\QQ$ gives
\[
\begin{aligned}
\|\nabla(\phi^{(1)}-\phi^{(2)})(t)\|
&\leq\int_0^t
\|(1+q^{(1)})u^{(1)}-(1+q^{(2)})u^{(2)}\|(s)\dd s\\
&\leq C\int_0^t
\bigl(\|u^{(1)}-u^{(2)}\|+\|q^{(1)}-q^{(2)}\|\bigr)(s)\dd s.
\end{aligned}
\]
Here $C$ depends on the common $H^3$ bound. Applying
Lemma~\ref{lem:poisson} to the difference of the Poisson equations
also yields
\[
\|\nabla(\phi^{(1)}-\phi^{(2)})\|_{H^3}
\leq C\bigl(
\|\nabla(\phi^{(1)}-\phi^{(2)})\|
+\|q^{(1)}-q^{(2)}\|_{H^2}\bigr).
\]
These estimates give the required local Lipschitz bound for the electric
force. Thus the transport estimates for $q$ and the Dirichlet parabolic
estimates for $u$
give a uniform bound and convergence on a sufficiently short
time interval. The compatibility condition gives
$u_t(0)\in H^1_0$ and hence the stated time regularity.
The existence time depends only on a positive lower bound for
$1+q_0$ and an upper bound for
$\|q_0\|_{H^3}+\|u_0\|_{H^3}+\|\nabla\phi_0\|$.
Restarting the same construction proves the continuation
criterion.
\end{proof}

\section{Uniform estimates in the half-space}\label{sec:estimates}

In this section $\Omega=\R^3_+$. Let $(q,u,\phi)$ be a solution on $[0,T]$ with the regularity
\eqref{eq:regularity}. Set
\begin{equation}\label{eq:ND}
\begin{aligned}
&N(t)^2=\|q(t)\|_{H^3}^2+\|u(t)\|_{H^3}^2
       +\|\nabla\phi(t)\|^2+\|u_t(t)\|_{H^1}^2,\\
&D(t)=\|q(t)\|_{H^3}^2+\|\nabla u(t)\|_{H^3}^2
       +\|\nabla u_t(t)\|_{H^1}^2.
\end{aligned}
\end{equation}
Assume that
\begin{equation}\label{eq:apriori-assumption}
\sup_{0\leq t\leq T}N(t)\leq\delta,
\end{equation}
where $0<\delta\leq1$ is sufficiently small. In particular,
$\|q\|_{L^\infty}\leq1/2$. The constants below are independent of $T$.

Writing the continuity equation as
\[
q_t=-\Div u-u\cdot\nabla q-q\Div u,
\]
and applying the Sobolev product estimates, we obtain
\begin{equation}\label{eq:qt-estimates}
\begin{aligned}
&\|\nabla q_t\|
\leq C\|\nabla u\|_{H^1}+CN(t)\|q\|_{H^2},\\
&\|q_t\|_{H^2}
\leq C\|\nabla u\|_{H^2}+CN(t)\|q\|_{H^3}.
\end{aligned}
\end{equation}
In particular, $\|q_t\|_{H^2}\leq CN(t)$ and
$\|q_t\|_{H^2}\leq CD(t)^{1/2}$. Since $\|q\|_{L^\infty}\leq1/2$,
the coefficients in \eqref{eq:F} satisfy
\[
\left\|\frac{q}{1+q}\right\|_{H^m}
+\|(1+q)^{\gamma-2}-1\|_{H^m}
\leq C\|q\|_{H^m},\qquad m=2,3.
\]
It follows that
\begin{equation}\label{eq:products}
\|F\|_{H^2}+\|q\Div u\|_{H^3}\leq CN(t)D(t)^{1/2},
\end{equation}
where the nonlinear terms are bounded by
\begin{equation}\label{eq:products-detail}
\begin{aligned}
\|u\cdot\nabla u\|_{H^2}
&\leq C\|u\|_{H^2}\|\nabla u\|_{H^2},\\
\left\|\frac{q}{1+q}Au\right\|_{H^2}
&\leq C\|q\|_{H^2}\|\nabla^2u\|_{H^2},\\
\|((1+q)^{\gamma-2}-1)\nabla q\|_{H^2}
&\leq C\|q\|_{H^2}\|\nabla q\|_{H^2},\\
\|q\Div u\|_{H^3}
&\leq C\|q\|_{H^3}\|\nabla u\|_{H^3}.
\end{aligned}
\end{equation}
Each right-hand side is at most $CN(t)D(t)^{1/2}$.
Differentiating \eqref{eq:F} in time gives
\begin{equation}\label{eq:Ft}
\begin{aligned}
F_t={}&-u_t\cdot\nabla u-u\cdot\nabla u_t
+\frac{q}{1+q}Au_t+\frac{q_t}{(1+q)^2}Au\\
&-\gamma\nabla\bigl[((1+q)^{\gamma-2}-1)q_t\bigr].
\end{aligned}
\end{equation}
Hence
\begin{equation}\label{eq:Ft-bound}
\|F_t\|\leq CN(t)D(t)^{1/2},
\end{equation}
where, by \eqref{eq:qt-estimates},
\begin{equation}\label{eq:Ft-detail}
\begin{aligned}
\|u_t\cdot\nabla u\|
&\leq\|u_t\|_{L^3}\|\nabla u\|_{L^6}
\leq CN(t)D(t)^{1/2},\\
\|u\cdot\nabla u_t\|
&\leq\|u\|_{L^\infty}\|\nabla u_t\|
\leq CN(t)D(t)^{1/2},\\
\left\|\frac{q}{1+q}Au_t\right\|
&\leq C\|q\|_{L^\infty}\|\nabla^2u_t\|
\leq CN(t)D(t)^{1/2},\\
\left\|\frac{q_t}{(1+q)^2}Au\right\|
&\leq C\|q_t\|_{L^\infty}\|\nabla^2u\|
\leq CN(t)D(t)^{1/2},\\
\|\nabla[((1+q)^{\gamma-2}-1)q_t]\|
&\leq C\|q\|_{L^\infty}\|\nabla q_t\|
+C\|q_t\|_{L^\infty}\|\nabla q\|\\
&\leq CN(t)D(t)^{1/2}.
\end{aligned}
\end{equation}
We shall use these bounds in the following estimates.

We begin with the basic energy identity.

\begin{lemma}\label{lem:basic}
Let $G(1)=G'(1)=0$ and $G''(\rho)=p'(\rho)/\rho$. Define
\begin{equation}\label{eq:E0}
\mathcal E_0(t)=\int_\Omega
\left(\frac12\rho|u|^2+G(\rho)+\frac12|\nabla\phi|^2\right)\dd x.
\end{equation}
Then
\begin{equation}\label{eq:basic-energy}
\frac{\dd}{\dd t}\mathcal E_0+a(u,u)=0,
\end{equation}
and $\mathcal E_0$ is equivalent to
$\|q\|^2+\|u\|^2+\|\nabla\phi\|^2$ under
\eqref{eq:apriori-assumption}.
\end{lemma}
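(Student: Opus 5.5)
We compute the time derivative of each of the three terms in $\mathcal E_0$, using the original system \eqref{eq:NSP} rather than the perturbation form. For the kinetic part, the continuity equation gives the standard identity
\[
\frac{\dd}{\dd t}\int_\Omega\frac12\rho|u|^2\dd x
=\int_\Omega\rho(u_t+u\cdot\nabla u)\cdot u\dd x,
\]
where the boundary term $\frac12\int_{\partial\Omega}\rho|u|^2u\cdot n$ vanishes by $u=0$. Inserting the momentum equation and integrating by parts with $u\in H^1_0$ yields
\[
-a(u,u)-\int_\Omega\nabla p\cdot u\dd x+\int_\Omega\rho\nabla\phi\cdot u\dd x .
\]

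For the potential part $G$, we use $G''(\rho)=p'(\rho)/\rho$, so $\nabla G'(\rho)=\nabla p/\rho$. Then $\frac{\dd}{\dd t}\int G(\rho)=\int G'(\rho)\rho_t=-\int G'(\rho)\Div(\rho u)=\int\rho u\cdot\nabla G'(\rho)=\int u\cdot\nabla p$, again using $u=0$ on the boundary. This cancels the pressure term.

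For the field energy we use \eqref{eq:field-time}: $(\nabla\phi,\nabla\phi_t)=-(\nabla\phi,\QQ(\rho u))=-(\nabla\phi,\rho u)$, since $\nabla\phi$ is a gradient of a $\dot H^1$ function and $\QQ$ is the orthogonal projection characterized by \eqref{eq:projection-definition}. This cancels $\int\rho\nabla\phi\cdot u$, and \eqref{eq:basic-energy} follows. The justification is the only delicate point: with the regularity \eqref{eq:regularity}, $\nabla\phi_t\in L^2$ by \eqref{eq:field-time-bound}, so all integrations by parts are legitimate. Decay at infinity is handled by $H^3$ membership, and the Neumann condition is used only through the weak formulation behind \eqref{eq:field-time}.

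For the equivalence, Taylor's formula with $G(1)=G'(1)=0$ gives $G(\rho)=\frac12G''(\tilde\rho)q^2$ with $\tilde\rho$ between $1$ and $\rho$. Under \eqref{eq:apriori-assumption}, $\|q\|_{L^\infty}\le1/2$, so $\tilde\rho\in[1/2,3/2]$ and $G''(\tilde\rho)=\gamma\tilde\rho^{\gamma-2}$ is bounded above and below by positive constants depending on $\gamma$. Similarly, $\frac12\le\rho\le\frac32$ makes $\frac12\rho|u|^2$ comparable to $|u|^2$. Hence $\mathcal E_0\simeq\|q\|^2+\|u\|^2+\|\nabla\phi\|^2$. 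There is no real obstacle; the step needing care is the electric-field cancellation, which is exactly what \eqref{eq:field-time} supplies.
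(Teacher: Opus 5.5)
Your proposal is correct and follows essentially the same route as the paper: you compute the kinetic energy derivative via the continuity equation, cancel the pressure work against $\frac{\dd}{\dd t}\int_\Omega G(\rho)\dd x$, cancel the electric work using \eqref{eq:field-time} and the orthogonality of $\QQ$ tested against $\nabla\phi$, and obtain the equivalence from Taylor's formula with $\|q\|_{L^\infty}\leq 1/2$. The only difference is cosmetic: you cancel the pressure through $\rho\nabla G'(\rho)=\nabla p$, whereas the paper uses $\rho G'(\rho)-G(\rho)=p(\rho)-p(1)$ to write a local conservation identity for $G(\rho)$; both are the same computation.
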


\begin{proof}
The definition of $G$ implies
\[
\rho G'(\rho)-G(\rho)=p(\rho)-p(1).
\]
The continuity equation therefore yields
\[
\partial_tG(\rho)+\Div(G(\rho)u)
+(p(\rho)-p(1))\Div u=0.
\]
Taking the inner product of the momentum equation in
\eqref{eq:NSP} with $u$ and adding the integral of this identity,
we obtain
\[
\frac{\dd}{\dd t}\int_\Omega
\left(\frac12\rho|u|^2+G(\rho)\right)\dd x
+a(u,u)=(\rho u,\nabla\phi).
\]
Here the continuity equation gives the kinetic energy derivative,
and the pressure terms cancel. The boundary fluxes vanish since $u=0$. By \eqref{eq:field-time} and the
self-adjointness of $\QQ$,
\[
\frac12\frac{\dd}{\dd t}\|\nabla\phi\|^2
=-(\rho u,\nabla\phi).
\]
Adding these equalities proves \eqref{eq:basic-energy}.
Finally, Taylor's formula gives
\[
G(1+q)=\gamma q^2\int_0^1(1-s)(1+sq)^{\gamma-2}\dd s,
\]
which is bounded above and below by positive multiples of $q^2$
for $|q|\leq1/2$. This proves the asserted equivalence.
\end{proof}

Differentiation in time preserves both boundary conditions. We use this
fact to estimate the first time derivatives.

\begin{lemma}\label{lem:time}
Set
\begin{equation}\label{eq:E1}
\mathcal E_1(t)=\frac12
\bigl(\gamma\|q_t\|^2+\|u_t\|^2+\|\nabla\phi_t\|^2\bigr).
\end{equation}
Then
\begin{equation}\label{eq:time-energy}
\frac{\dd}{\dd t}\mathcal E_1+c\|\nabla u_t\|^2\leq CN(t)D(t).
\end{equation}
\end{lemma}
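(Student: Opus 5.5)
We differentiate the perturbation system \eqref{eq:perturbation} in time. Since the boundary conditions do not depend on $t$, the differentiated system is
\[
q_{tt}+\Div u_t=-\Div(qu)_t,\qquad
u_{tt}+Au_t+\gamma\nabla q_t-\nabla\phi_t=F_t,\qquad
\Delta\phi_t=q_t,
\]
with $u_t=0$ and $\partial_n\phi_t=0$ on $\partial\Omega$. The plan is to test the momentum equation with $u_t$ and the continuity equation with $\gamma q_t$, and to let the electric term produce $\frac{\dd}{\dd t}\frac12\|\nabla\phi_t\|^2$. This mirrors the linear part of Lemma~\ref{lem:basic}.

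\textbf{Momentum equation.} Testing with $u_t\in H^1_0$ gives $\frac12\frac{\dd}{\dd t}\|u_t\|^2+a(u_t,u_t)$ from the first two terms. The pressure term is $\gamma(\nabla q_t,u_t)=-\gamma(q_t,\Div u_t)$, with no boundary term because $u_t=0$ on $\partial\Omega$.

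\textbf{Continuity equation.} Testing the time-differentiated continuity equation with $\gamma q_t$ gives $\frac\gamma2\frac{\dd}{\dd t}\|q_t\|^2+\gamma(\Div u_t,q_t)$. This cancels the pressure term above.

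\textbf{Electric term.} We use \eqref{eq:field-time} differentiated in time, $\nabla\phi_{tt}=-\QQ((1+q)u)_t=-\QQ u_t-\QQ(qu)_t$. By self-adjointness of $\QQ$ and the fact that $\nabla\phi_t$ is a gradient, so $\QQ\nabla\phi_t=\nabla\phi_t$,
\[
-(\nabla\phi_t,u_t)=\tfrac12\tfrac{\dd}{\dd t}\|\nabla\phi_t\|^2+(\nabla\phi_t,(qu)_t).
\]
Equivalently, one may integrate by parts with $u_t=0$ and use the differentiated continuity equation together with $\Delta\phi_t=q_t$ and $\partial_n\phi_t=0$.

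Adding the three identities and using $a(u_t,u_t)\ge c\|\nabla u_t\|^2$ yields
\[
\frac{\dd}{\dd t}\mathcal E_1+c\|\nabla u_t\|^2\le |(F_t,u_t)|+\gamma|(\Div(qu)_t,q_t)|+|(\nabla\phi_t,(qu)_t)|.
\]

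\textbf{Bounding the remainders.}
\begin{itemize}
\item[(i)] The first term satisfies $|(F_t,u_t)|\le\|F_t\|\|u_t\|\le CN(t)D(t)^{1/2}\|u_t\|$ by \eqref{eq:Ft-bound}. But $\|u_t\|$ is not a dissipative quantity, because $D$ contains only $\|\nabla u_t\|_{H^1}$, not $\|u_t\|$. So we instead use $\|u_t\|_{L^6}\le C\|\nabla u_t\|\le CD^{1/2}$ and estimate $F_t$ in $L^{6/5}$, term by term as in \eqref{eq:Ft-detail}. Each term is a product of one factor bounded by $N$ in $L^3$ (or by Sobolev) and one factor bounded by $D^{1/2}$ in $L^2$. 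For example, $\|u_t\cdot\nabla u\|_{L^{6/5}}\le\|u_t\|_{L^2}\|\nabla u\|_{L^3}\le N D^{1/2}$, and $\|qAu_t\|_{L^{6/5}}\le\|q\|_{L^3}\|\nabla^2u_t\|$. This gives $|(F_t,u_t)|\le CND$.
\item[(ii)] The second term involves $(qu)_t=q_tu+qu_t$. Its divergence contains $\nabla q_t\,u$, $q_t\Div u$, $\nabla q\cdot u_t$ and $q\Div u_t$. We pair these with $q_t\in L^2$, using $\|q_t\|\le CD^{1/2}$ from \eqref{eq:qt-estimates} and putting $u,q$ in $L^\infty$ and $u_t$ in $L^6$. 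Each piece is at most $CND$.
\item[(iii)] For the third term we use \eqref{eq:poisson-time}, $\|\nabla\phi_t\|_{L^6}\le C\|q_t\|\le CD^{1/2}$, and $\|(qu)_t\|_{L^{6/5}}\le\|q_t\|_{L^2}\|u\|_{L^3}+\|q\|_{L^{3/2}}\|u_t\|_{L^6}$. The factor $\|q\|_{L^{3/2}}$ is not controlled, so for the second piece we use $\|q\|_{L^3}\|u_t\|_{L^2}$ and then bound $\|u_t\|$ by $\|u_t\|_{H^1}\le N$. It remains to check that the other factor is dissipative: $\|q\|_{L^3}\le\|q\|_{H^1}\le D^{1/2}$. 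This gives $CND$.
\end{itemize}

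\textbf{Main obstacle.} The difficulty is exactly the bookkeeping in (i)–(iii). Neither $\|u\|$ nor $\|u_t\|$ appears in $D$, so every product must place a factor with a derivative, or $q$ or $q_t$, in the dissipative slot. We achieve this by always putting $u_t$ in $L^6$ via $\nabla u_t$, or $u$ in $L^\infty$ or $L^3$ as an $N$ factor paired with a $D^{1/2}$ factor.
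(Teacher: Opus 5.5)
Your proposal is correct and follows the paper's proof. You use the same time-differentiated energy identity, convert the electric term through $\nabla\phi_{tt}=-\QQ u_t-\QQ(qu)_t$, and rely on the same key point: since $\|u_t\|$ is not part of $D(t)$, $u_t$ must go in $L^6$ (or the dissipative slot must be taken by $q$, $q_t$ or a derivative).

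The differences are cosmetic. You bound $(F_t,u_t)$ by $\|F_t\|_{L^{6/5}}\|u_t\|_{L^6}$ and keep $\Div(qu)_t$ paired with $q_t$ without integrating by parts. The paper instead integrates the viscous, pressure and transport pieces by parts, which avoids $\|\nabla^2u_t\|$ and $\|\nabla q_t\|$. That is unnecessary here, since both are bounded by $CD(t)^{1/2}$.
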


\begin{proof}
Differentiating the first two equations of \eqref{eq:perturbation}
in time and testing them with $\gamma q_t$ and $u_t$, respectively,
gives
\begin{equation}\label{eq:time-identity}
\frac{\dd}{\dd t}\mathcal E_1+a(u_t,u_t)
=(F_t,u_t)+\gamma((qu)_t,\nabla q_t)
-(\nabla\phi_t,(qu)_t).
\end{equation}
Here $u_t=0$ and $(qu)_t=0$ on the boundary. Differentiating
\eqref{eq:field-time} and using the orthogonality of $\QQ$, we have
\[
\frac12\frac{\dd}{\dd t}\|\nabla\phi_t\|^2
=-(u_t,\nabla\phi_t)-((qu)_t,\nabla\phi_t),
\]
which supplies the electric part of \eqref{eq:time-identity}.

We estimate $(F_t,u_t)$ using \eqref{eq:Ft}. For its principal
viscous term, integration by parts yields
\begin{equation}\label{eq:time-viscous}
\begin{aligned}
\left(\frac{q}{1+q}Au_t,u_t\right)
={}&\int_\Omega\frac{q}{1+q}
\left(\mu|\nabla u_t|^2+(\mu+\lambda)|\Div u_t|^2\right)\dd x\\
&+\mu\sum_{i,j=1}^3\int_\Omega
\partial_j\left(\frac{q}{1+q}\right)
\partial_j u_{t,i}\,u_{t,i}\dd x\\
&+(\mu+\lambda)\int_\Omega
\Div u_t\,\nabla\left(\frac{q}{1+q}\right)\cdot u_t\dd x.
\end{aligned}
\end{equation}
The right-hand side is bounded in absolute value by
\[
C\|q\|_{L^\infty}\|\nabla u_t\|^2
+C\|\nabla q\|_{L^3}\|\nabla u_t\|\|u_t\|_{L^6}
\leq CN(t)\|\nabla u_t\|^2.
\]
We used $\|u_t\|_{L^6}\leq C\|\nabla u_t\|$ in the last step.
The other viscous term satisfies
\[
\left|\left(\frac{q_t}{(1+q)^2}Au,u_t\right)\right|
\leq C\|q_t\|_{L^3}\|Au\|\|u_t\|_{L^6}
\leq CN(t)D(t).
\]
Integration by parts in the pressure term and H\"older's
inequality give
\begin{align*}
\left|\left(\nabla[((1+q)^{\gamma-2}-1)q_t],u_t\right)\right|
&\leq C\|q\|_{L^\infty}\|q_t\|\|\nabla u_t\|
\leq CN(t)D(t),\\
|(u_t\cdot\nabla u,u_t)|
&\leq\|u_t\|_{L^3}\|\nabla u\|\|u_t\|_{L^6}
\leq CN(t)D(t),\\
|(u\cdot\nabla u_t,u_t)|
&=\frac12|(\Div u,|u_t|^2)|\\
&\leq C\|\nabla u\|\|u_t\|_{L^3}\|u_t\|_{L^6}
\leq CN(t)D(t).
\end{align*}
Consequently, $|(F_t,u_t)|\leq CN(t)D(t)$.
For the continuity term, we write $(qu)_t=q_tu+qu_t$ and obtain
\begin{align*}
|(q_tu,\nabla q_t)|
&=\frac12|(\Div u,q_t^2)|
\leq C\|\nabla u\|_{L^\infty}\|q_t\|^2
\leq CN(t)D(t),\\
|(qu_t,\nabla q_t)|
&\leq\|q\|_{L^3}\|u_t\|_{L^6}\|\nabla q_t\|
\leq CN(t)D(t).
\end{align*}
Finally, Lemma~\ref{lem:poisson} gives
\begin{align*}
|(\nabla\phi_t,(qu)_t)|
&\leq\|\nabla\phi_t\|_{L^6}
\bigl(\|q_t\|\|u\|_{L^3}+\|q\|\|u_t\|_{L^3}\bigr)\\
&\leq CN(t)\bigl(\|q_t\|^2+\|q\|^2\bigr)
\leq CN(t)D(t).
\end{align*}
Substituting these bounds into \eqref{eq:time-identity} proves
\eqref{eq:time-energy}.
\end{proof}

The previous identities give the viscous dissipation. The following
estimate supplies the $L^2$ and $H^1$ control of the density in the
time integral.

\begin{lemma}\label{lem:density}
The density satisfies
\begin{equation}\label{eq:density-energy}
\frac{\dd}{\dd t}(u,\nabla q)+c\|q\|_{H^1}^2
\leq C\|\nabla u\|_{H^1}^2+CN(t)D(t).
\end{equation}
\end{lemma}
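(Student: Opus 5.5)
We test the momentum equation in \eqref{eq:perturbation} with $\nabla q$:
\[
(u_t,\nabla q)+(Au,\nabla q)+\gamma\|\nabla q\|^2-(\nabla\phi,\nabla q)=(F,\nabla q).
\]
For the electric term, the Neumann condition $\partial_n\phi=0$ and $\Delta\phi=q$ give
\[
-(\nabla\phi,\nabla q)=(\Delta\phi,q)=\|q\|^2.
\]
This is how the zeroth-order density dissipation arises even though there is no Poincaré inequality. The left side therefore contains $\gamma\|\nabla q\|^2+\|q\|^2\geq c\|q\|_{H^1}^2$.

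For the time derivative term, we write
\[
(u_t,\nabla q)=\frac{\dd}{\dd t}(u,\nabla q)-(u,\nabla q_t).
\]
Since $u=0$ on $\partial\Omega$, we have $-(u,\nabla q_t)=(\Div u,q_t)$. Substituting the continuity equation $q_t=-\Div u-\Div(qu)$ gives
\[
(\Div u,q_t)=-\|\Div u\|^2-(\Div u,\Div(qu)).
\]
The first piece is bounded by $\|\nabla u\|^2$. The second is bounded by $\|\nabla u\|\,(\|u\|_{L^\infty}\|\nabla q\|+\|q\|_{L^\infty}\|\nabla u\|)\leq CN(t)D(t)$, using \eqref{eq:sobolev}.

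The remaining terms are handled as follows. The viscous term is bounded by $|(Au,\nabla q)|\leq C\|\nabla^2u\|\|\nabla q\|$, and Young's inequality absorbs $\frac{\gamma}{4}\|\nabla q\|^2$ into the left side, leaving $C\|\nabla u\|_{H^1}^2$. The nonlinear term is bounded by $|(F,\nabla q)|\leq\|F\|\|\nabla q\|\leq CN(t)D(t)^{1/2}\cdot D(t)^{1/2}$, using \eqref{eq:products}. Collecting everything yields \eqref{eq:density-energy} with $c=\min(1,\gamma/2)$.

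The main point to check is the boundary term in the electric identity. It is $\int_{\partial\Omega}q\,\partial_n\phi$, which vanishes by the Neumann condition. We also need $\phi\in\dot H^1$ with $\nabla^2\phi\in L^2$ (Lemma~\ref{lem:poisson}) to justify the integration by parts on the unbounded domain; this can be done by approximating $q$ by compactly supported functions. Everything else is routine once we verify that no boundary term appears in $(u_t,\nabla q)$ or $(u,\nabla q_t)$, which follows from $u=0$ on $\partial\Omega$.
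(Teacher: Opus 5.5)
Your proposal is correct and follows the paper's proof essentially step for step. You test the momentum equation by $\nabla q$ and use the Neumann condition to turn the electric term into $\|q\|^2$. You then rewrite $(u_t,\nabla q)$ as a time derivative plus $\|\Div u\|^2+(\Div u,\Div(qu))$ via the continuity equation, and bound $(Au,\nabla q)$ and $(F,\nabla q)$ by Young's inequality and \eqref{eq:products}, exactly as in the paper.
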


\begin{proof}
Taking the inner product of the momentum equation in
\eqref{eq:perturbation} with $\nabla q$ and using the continuity
equation, we find
\begin{equation}\label{eq:density-identity}
\begin{aligned}
\frac{\dd}{\dd t}(u,\nabla q)
+\gamma\|\nabla q\|^2+\|q\|^2
={}&\|\Div u\|^2-(Au,\nabla q)+(F,\nabla q)\\
&+(\Div(qu),\Div u).
\end{aligned}
\end{equation}
In this calculation,
\[
(u,\nabla q_t)=-(\Div u,q_t),\qquad
(\nabla\phi,\nabla q)=-\|q\|^2,
\]
because $u\cdot n=0$ and $\partial_n\phi=0$. Moreover,
\begin{align*}
|(Au,\nabla q)|
&\leq\frac\gamma2\|\nabla q\|^2+C\|\nabla^2u\|^2,\\
|(F,\nabla q)|&\leq\|F\|\|\nabla q\|\leq CN(t)D(t),\\
|(\Div(qu),\Div u)|
&\leq C\bigl(\|u\|_{L^\infty}\|\nabla q\|
+\|q\|_{L^\infty}\|\nabla u\|\bigr)\|\nabla u\|
\leq CN(t)D(t).
\end{align*}
Substitution into \eqref{eq:density-identity} leaves positive
multiples of $\|q\|^2$ and $\|\nabla q\|^2$ on the left-hand
side and proves \eqref{eq:density-energy}.
\end{proof}

We next estimate the spatial gradient of $u_t$. In this estimate,
the electric contribution at the boundary is retained through a
correction to the energy.

\begin{lemma}\label{lem:gradient-time}
Define
\begin{equation}\label{eq:E2}
\mathcal E_2(t)
=\frac12a(u_t,u_t)+(\QQ u,Au)-\frac12a(u,u).
\end{equation}
Then
\begin{equation}\label{eq:gradient-time-energy}
\frac{\dd}{\dd t}\mathcal E_2+c\|\nabla^2u_t\|^2
\leq C\|\nabla u\|_{H^1}^2+CN(t)D(t).
\end{equation}
\end{lemma}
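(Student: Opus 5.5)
The plan is to apply $\partial_t$ to the momentum equation in \eqref{eq:perturbation} and test the result with $Au_t$. The differentiated equation reads
\[
u_{tt}+Au_t+\gamma\nabla q_t-\nabla\phi_t=F_t .
\]
Since $u_{tt}=0$ on $\partial\Omega$, we have $(u_{tt},Au_t)=a(u_{tt},u_t)=\frac12\frac{\dd}{\dd t}a(u_t,u_t)$. Testing therefore gives
\[
\frac12\frac{\dd}{\dd t}a(u_t,u_t)+\|Au_t\|^2
=(F_t,Au_t)-\gamma(\nabla q_t,Au_t)+(\nabla\phi_t,Au_t).
\]
By Lemma~\ref{lem:elliptic}, $\|\nabla^2u_t\|\leq C\|Au_t\|$, so the dissipation $c\|\nabla^2u_t\|^2$ comes from $\|Au_t\|^2$.

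The first two terms on the right are routine. Using \eqref{eq:Ft-bound} and Young's inequality,
\[
|(F_t,Au_t)|\leq\tfrac14\|Au_t\|^2+CN(t)^2D(t).
\]
Using \eqref{eq:qt-estimates},
\[
\gamma|(\nabla q_t,Au_t)|\leq\tfrac14\|Au_t\|^2+C\|\nabla u\|_{H^1}^2+CN(t)^2\|q\|_{H^2}^2 .
\]
Both remainders fit into the right-hand side of \eqref{eq:gradient-time-energy}, since $N(t)\leq\delta\leq1$.

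The main obstacle is the electric term $(\nabla\phi_t,Au_t)$. We cannot integrate by parts onto $u_t$: the Neumann condition controls only the normal component of $\nabla\phi_t$, and $Au_t$ has no boundary vanishing. Instead we use \eqref{eq:field-time} to write
\[
(\nabla\phi_t,Au_t)=-(\QQ u,Au_t)-(\QQ(qu),Au_t).
\]
For the second piece, the bound $\|\QQ(qu)\|\leq\|qu\|\leq\|q\|_{L^3}\|u\|_{L^6}\leq CN(t)\|\nabla u\|$ gives
\[
|(\QQ(qu),Au_t)|\leq\tfrac14\|Au_t\|^2+CN(t)^2D(t).
\]
For the first piece, we convert it into a time derivative. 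Since $\QQ$ is time independent,
\[
(\QQ u,Au_t)=\frac{\dd}{\dd t}(\QQ u,Au)-(\QQ u_t,Au).
\]
Because $u_t\in H^1_0$ and $\QQ u_t=u_t-\PP u_t$,
\[
(\QQ u_t,Au)=(u_t,Au)-(\PP u_t,Au)=\frac12\frac{\dd}{\dd t}a(u,u)-(\PP u_t,Au).
\]
Hence
\[
(\QQ u,Au_t)=\frac{\dd}{\dd t}\Bigl[(\QQ u,Au)-\tfrac12a(u,u)\Bigr]+(\PP u_t,Au).
\]
This is exactly the correction built into $\mathcal E_2$ in \eqref{eq:E2}. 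The leftover term is handled by applying $\PP$ to the momentum equation. Since $\PP$ annihilates $\nabla q$ and $\nabla\phi$, we get $\PP u_t=\PP(F-Au)$, and therefore
\[
|(\PP u_t,Au)|\leq\|Au\|^2+\|F\|\|Au\|\leq C\|\nabla u\|_{H^1}^2+CN(t)D(t),
\]
using \eqref{eq:products}.

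Collecting these pieces and absorbing the three $\frac14\|Au_t\|^2$ terms into the left-hand side yields \eqref{eq:gradient-time-energy}. The needed regularity for the time differentiation, namely $u_{tt}\in L^2L^2$ and $u_t\in L^2H^2$, is supplied by \eqref{eq:regularity}.
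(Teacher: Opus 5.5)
Your proposal is correct and follows the paper's proof essentially step for step. You rewrite $(\nabla\phi_t,Au_t)$ via \eqref{eq:field-time}, turn $-(\QQ u,Au_t)$ into the time derivative of the correction $-(\QQ u,Au)+\frac12a(u,u)$ plus the term $-(\PP u_t,Au)=\|\PP Au\|^2-(\PP F,Au)$, and close by Young's inequality and $\|\nabla^2u_t\|\leq C\|Au_t\|$; the only differences are cosmetic, such as bounding $\|\QQ(qu)\|$ by $\|q\|_{L^3}\|u\|_{L^6}$ instead of $\|u\|_{L^\infty}\|q\|$.
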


\begin{proof}
Direct integration by parts gives
\begin{equation}\label{eq:electric-boundary}
\begin{aligned}
(\nabla\phi_t,Au_t)
={}&\mu(\nabla^2\phi_t,\nabla u_t)
+(\mu+\lambda)(q_t,\Div u_t)\\
&-\mu\int_{\partial\Omega}
\nabla_y\phi_t\cdot\partial_n u_{t,y}\dd y.
\end{aligned}
\end{equation}
The last integral need not vanish, since the no-slip condition does not
force $\partial_n u_{t,y}$ to vanish on the boundary.
To retain its contribution, we apply $\PP$ to the momentum equation and use
\eqref{eq:field-time}:
\[
\PP u_t=-\PP Au+\PP F,\qquad
\QQ u_t=u_t+\PP Au-\PP F,\qquad
\nabla\phi_t=-\QQ u-\QQ(qu).
\]
It follows that
\begin{equation}\label{eq:projection-energy-identity}
\begin{aligned}
(\nabla\phi_t,Au_t)
={}&-\frac{\dd}{\dd t}(\QQ u,Au)
+(\QQ u_t,Au)-(\QQ(qu),Au_t)\\
={}&\frac{\dd}{\dd t}
\left[-(\QQ u,Au)+\frac12a(u,u)\right]
+\|\PP Au\|^2\\
&-(\PP F,Au)-(\QQ(qu),Au_t).
\end{aligned}
\end{equation}
Here $(u_t,Au)=a(u,u_t)$ since $u_t=0$ on the boundary.

Testing the time derivative of the momentum equation with $Au_t$
and substituting \eqref{eq:projection-energy-identity}, we obtain
\begin{equation}\label{eq:gradient-time-identity}
\begin{aligned}
\frac{\dd}{\dd t}\mathcal E_2+\|Au_t\|^2
={}&\|\PP Au\|^2-(\PP F,Au)-(\QQ(qu),Au_t)\\
&-\gamma(\nabla q_t,Au_t)+(F_t,Au_t).
\end{aligned}
\end{equation}
Since
\[
\|\QQ(qu)\|\leq\|u\|_{L^\infty}\|q\|,
\]
the $L^2$ boundedness of the projections and
\eqref{eq:products}--\eqref{eq:Ft-bound} give
\begin{align*}
\|\PP Au\|^2&\leq C\|\nabla^2u\|^2,\\
|(\PP F,Au)|&\leq\|F\|\|Au\|\leq CN(t)D(t),\\
|(\QQ(qu),Au_t)|+|(F_t,Au_t)|
&\leq\epsilon\|Au_t\|^2+C_\epsilon N(t)^2D(t).
\end{align*}
For the pressure term, \eqref{eq:qt-estimates} implies
\[
\gamma|(\nabla q_t,Au_t)|
\leq\epsilon\|Au_t\|^2
+C_\epsilon\bigl(\|\nabla u\|_{H^1}^2
+N(t)^2\|q\|_{H^2}^2\bigr).
\]
Choosing $\epsilon$ sufficiently small absorbs the two terms
containing $\|Au_t\|^2$. Since $N(t)\leq\delta\leq1$ and
$\|\nabla^2u_t\|\leq C\|Au_t\|$ by
Lemma~\ref{lem:elliptic}, these bounds prove
\eqref{eq:gradient-time-energy}.
\end{proof}

Since tangential differentiation preserves the boundary conditions,
we can now derive the estimates for tangential derivatives up to order
three.

\begin{lemma}\label{lem:tangential}
Under \eqref{eq:apriori-assumption},
\begin{equation}\label{eq:tangential-energy}
\begin{aligned}
&\frac12\frac{\dd}{\dd t}
\sum_{1\leq|\beta|\leq3}
\left(\gamma\|\partial_y^\beta q\|^2
+\|\partial_y^\beta u\|^2
+\|\nabla\partial_y^\beta\phi\|^2\right)\\
&\qquad+c\sum_{1\leq|\beta|\leq3}
\|\nabla\partial_y^\beta u\|^2\leq CN(t)D(t).
\end{aligned}
\end{equation}
\end{lemma}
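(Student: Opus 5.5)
We apply $\partial_y^\beta$ with $1\leq|\beta|\leq3$ to the perturbation system \eqref{eq:perturbation} and repeat the basic energy computation at the linear level. Since $\partial_y^\beta$ commutes with the boundary conditions in the half-space, $\partial_y^\beta u=0$ and $\partial_3\partial_y^\beta\phi=0$ on $\partial\Omega$. We test the differentiated continuity equation with $\gamma\partial_y^\beta q$ and the differentiated momentum equation with $\partial_y^\beta u$. The viscous term then gives $a(\partial_y^\beta u,\partial_y^\beta u)\geq c\|\nabla\partial_y^\beta u\|^2$, and the pressure terms $\gamma(\nabla\partial_y^\beta q,\partial_y^\beta u)+\gamma(\Div\partial_y^\beta u,\partial_y^\beta q)$ cancel after integration by parts, because $\partial_y^\beta u$ vanishes on the boundary.

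For the electric term we differentiate \eqref{eq:field-time} tangentially; $\QQ$ commutes with $\partial_y$. This gives
\[
\tfrac12\tfrac{\dd}{\dd t}\|\nabla\partial_y^\beta\phi\|^2
=-(\partial_y^\beta u,\nabla\partial_y^\beta\phi)
-(\partial_y^\beta(qu),\nabla\partial_y^\beta\phi),
\]
and the first term cancels $-(\nabla\partial_y^\beta\phi,\partial_y^\beta u)$ from the momentum equation. We obtain the identity
\[
\frac12\frac{\dd}{\dd t}\bigl(\gamma\|\partial_y^\beta q\|^2+\|\partial_y^\beta u\|^2+\|\nabla\partial_y^\beta\phi\|^2\bigr)
+a(\partial_y^\beta u,\partial_y^\beta u)
=-\gamma(\partial_y^\beta\Div(qu),\partial_y^\beta q)+(\partial_y^\beta F,\partial_y^\beta u)-(\partial_y^\beta(qu),\nabla\partial_y^\beta\phi).
\]

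It remains to bound the three nonlinear terms by $CN(t)D(t)$. For the electric term we use $\|\nabla\partial_y^\beta\phi\|\leq C\|q\|_{H^2}$ from Lemma~\ref{lem:poisson}, since $|\beta|\geq1$ gives at least two derivatives on $\phi$. Together with $\|\partial_y^\beta(qu)\|\leq C\|q\|_{H^3}\|u\|_{H^3}$, and putting one factor of $q$ into $D^{1/2}$ and $u$ into $N$, this gives $CN(t)D(t)$. For the momentum nonlinearity with $|\beta|\leq2$ we use \eqref{eq:products}. When $|\beta|=3$ we move one tangential derivative onto $\partial_y^\beta u$, which is allowed since there is no boundary term in $y$. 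This leaves $\|\partial_y^{2}F\|\,\|\nabla\partial_y^\beta u\|\leq CN(t)D(t)^{1/2}D(t)^{1/2}$.

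The main obstacle is the transport term $(\partial_y^\beta(u\cdot\nabla q),\partial_y^\beta q)$ at $|\beta|=3$, which appears to need four derivatives of $q$. We write $\Div(qu)=u\cdot\nabla q+q\Div u$. The term $q\Div u$ is handled by \eqref{eq:products} ($\|q\Div u\|_{H^3}\leq CN D^{1/2}$). For the transport part we split
\[
\partial_y^\beta(u\cdot\nabla q)=u\cdot\nabla\partial_y^\beta q+[\partial_y^\beta,u]\cdot\nabla q.
\]
The principal part is $\frac12(u,\nabla|\partial_y^\beta q|^2)=-\frac12(\Div u,|\partial_y^\beta q|^2)$, with no boundary term since $u\cdot n=0$; it is bounded by $\|\nabla u\|_{L^\infty}\|q\|_{H^3}^2\leq CN D$. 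The commutator is controlled by \eqref{eq:sobolev-commutator}: $C(\|\nabla u\|_{L^\infty}\|\nabla^3 q\|+\|\nabla^3u\|\|\nabla q\|_{L^\infty})\|q\|_{H^3}$. Since $\|\nabla q\|_{L^\infty}\leq C\|q\|_{H^3}$, this is again at most $CN(t)D(t)$. Summing over $\beta$ proves \eqref{eq:tangential-energy}.
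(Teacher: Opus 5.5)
Your proof is correct. Its skeleton coincides with the paper's: the same tangential energy identity \eqref{eq:tangential-identity}, the same cancellation of the pressure and electric cross terms via \eqref{eq:field-time} and the commutation of $\QQ$ with $\partial_y$, and the same treatment of the transport term by the cancellation $-\frac12(\Div u,|\partial_y^\beta q|^2)$ plus \eqref{eq:sobolev-commutator}.

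The genuine difference is in the momentum nonlinearity. The paper handles $(\partial_y^\beta F,\partial_y^\beta u)$ term by term:
the convective part by the same cancellation used for transport;
the nonlinear pressure by moving its primitive $\int_0^q\gamma((1+s)^{\gamma-2}-1)\dd s$ onto $\Div\partial_y^\beta u$;
the viscous part by the Leibniz expansion \eqref{eq:tangential-viscous}, integrating the top-order piece $\frac{q}{1+q}A\partial_y^\beta u$ by parts once.
Some such device is needed, since $\|F\|_{H^3}$ is not controlled by $N(t)D(t)^{1/2}$: its viscous part would require five derivatives of $u$.

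You avoid all of this by shifting one tangential derivative from $\partial_y^\beta F$ onto $\partial_y^\beta u$ when $|\beta|=3$. This costs nothing in the half-space, since integration in $y$ produces no boundary term. It reduces the term to $\|F\|_{H^2}\|\nabla\partial_y^\beta u\|$, which is already covered by \eqref{eq:products} and $\|\nabla\partial_y^\beta u\|\leq\|\nabla u\|_{H^3}$. Likewise, you bound $\partial_y^\beta(q\Div u)$ directly through $\|q\Div u\|_{H^3}$, instead of splitting off $q\Div\partial_y^\beta u$ and a commutator.

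Your route is shorter and uses only estimates already established before the lemma. The paper's term-by-term version is the one it later invokes for the localized operators $L=\chi_\ell\partial_y^\beta$ in Lemma~\ref{lem:ext-tangential}. Your shortcut would carry over there as well, at the price of tracking lower-order terms produced by derivatives of the cutoff and the chart Jacobian.
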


\begin{proof}
Tangential derivatives preserve the boundary conditions
$\partial_y^\beta u=0$ and
$\partial_n\partial_y^\beta\phi=0$. Applying
$\partial_y^\beta$ to \eqref{eq:perturbation}, taking the
corresponding inner products, and using \eqref{eq:field-time},
we obtain
\begin{equation}\label{eq:tangential-identity}
\begin{aligned}
&\frac12\frac{\dd}{\dd t}
\left(\gamma\|\partial_y^\beta q\|^2
+\|\partial_y^\beta u\|^2
+\|\nabla\partial_y^\beta\phi\|^2\right)
+a(\partial_y^\beta u,\partial_y^\beta u)\\
&\quad=(\partial_y^\beta F,\partial_y^\beta u)
-\gamma(\partial_y^\beta\Div(qu),\partial_y^\beta q)
-(\nabla\partial_y^\beta\phi,\partial_y^\beta(qu)).
\end{aligned}
\end{equation}
For the continuity term, write
\begin{equation}\label{eq:tangential-transport}
\begin{aligned}
\partial_y^\beta\Div(qu)
={}&u\cdot\nabla\partial_y^\beta q
+q\Div\partial_y^\beta u\\
&+[\partial_y^\beta,u\cdot\nabla]q
+[\partial_y^\beta,q]\Div u.
\end{aligned}
\end{equation}
The zero boundary flux and \eqref{eq:sobolev-commutator} give
\begin{align*}
(u\cdot\nabla\partial_y^\beta q,\partial_y^\beta q)
&=-\frac12(\Div u,|\partial_y^\beta q|^2),\\
\|[\partial_y^\beta,u\cdot\nabla]q\|
+\|[\partial_y^\beta,q]\Div u\|
&\leq C\bigl(\|\nabla u\|_{L^\infty}\|q\|_{H^3}
+\|\nabla q\|_{L^\infty}\|\nabla u\|_{H^2}\bigr)
\leq CN(t)D(t)^{1/2}.
\end{align*}
Together with
\[
|(q\Div\partial_y^\beta u,\partial_y^\beta q)|
\leq\|q\|_{L^\infty}\|\Div\partial_y^\beta u\|
\|\partial_y^\beta q\|\leq CN(t)D(t),
\]
these estimates bound the continuity term in
\eqref{eq:tangential-identity} by $CN(t)D(t)$.
The convective term in $F$ is treated by the same integration:
\begin{align*}
|(\partial_y^\beta(u\cdot\nabla u),\partial_y^\beta u)|
&\leq\frac12\|\Div u\|_{L^\infty}\|\partial_y^\beta u\|^2
+\|[\partial_y^\beta,u\cdot\nabla]u\|
\|\partial_y^\beta u\|\\
&\leq CN(t)\|\nabla u\|_{H^2}^2\leq CN(t)D(t).
\end{align*}

For the nonlinear pressure, integration by parts gives
\begin{equation}\label{eq:pressure-primitive}
\begin{aligned}
&\left(-\partial_y^\beta
\left[\gamma\bigl((1+q)^{\gamma-2}-1\bigr)\nabla q\right],
\partial_y^\beta u\right)\\
&\qquad=
\left(\partial_y^\beta
\int_0^q\gamma\bigl((1+s)^{\gamma-2}-1\bigr)\dd s,
\Div\partial_y^\beta u\right).
\end{aligned}
\end{equation}
The primitive vanishes to second order at $q=0$. Hence the
composition and product estimates imply
\[
\left\|\partial_y^\beta
\int_0^q\gamma\bigl((1+s)^{\gamma-2}-1\bigr)\dd s\right\|
\leq C\|q\|_{L^\infty}\|q\|_{H^3}
\leq CN(t)\|q\|_{H^3}.
\]
Multiplication by $\|\Div\partial_y^\beta u\|$ bounds
\eqref{eq:pressure-primitive} by $CN(t)D(t)$.

For the viscous term in $F$, expand
\begin{equation}\label{eq:tangential-viscous}
\partial_y^\beta\left(\frac{q}{1+q}Au\right)
=\frac{q}{1+q}A\partial_y^\beta u
+\sum_{0<\zeta\leq\beta}\binom\beta\zeta
\partial_y^\zeta\left(\frac{q}{1+q}\right)
A\partial_y^{\beta-\zeta}u.
\end{equation}
The first term is integrated once, with zero boundary contribution,
to obtain
\begin{align*}
\left|\left(\frac{q}{1+q}A\partial_y^\beta u,
\partial_y^\beta u\right)\right|
&\leq C\left\|\frac{q}{1+q}\right\|_{L^\infty}
\|\nabla\partial_y^\beta u\|^2\\
&\quad+C\left\|\nabla\left(\frac{q}{1+q}\right)\right\|_{L^3}
\|\nabla\partial_y^\beta u\|\|\partial_y^\beta u\|_{L^6}
\leq CN(t)\|\nabla\partial_y^\beta u\|^2.
\end{align*}
For $m=|\beta|$ and $0<\zeta\leq\beta$, the terms with
$|\zeta|=1,2$ satisfy, respectively,
\begin{align*}
&\left|\left(\partial_y^\zeta\left(\frac{q}{1+q}\right)
A\partial_y^{\beta-\zeta}u,\partial_y^\beta u\right)\right|\\
&\quad\leq
\begin{cases}
C\left\|\nabla\left(\dfrac{q}{1+q}\right)\right\|_{L^\infty}
\|\nabla^{m+1}u\|\|\partial_y^\beta u\|,& |\zeta|=1,\\[1mm]
C\left\|\nabla^2\left(\dfrac{q}{1+q}\right)\right\|_{L^3}
\|\nabla^m u\|_{L^6}\|\partial_y^\beta u\|,& |\zeta|=2,
\end{cases}
\end{align*}
and both bounds are at most $CN(t)D(t)$ by \eqref{eq:sobolev}.
When $|\beta|=3$, the term containing the highest derivative of
the coefficient satisfies
\begin{align*}
&\left|\left(
\partial_y^\beta\left(\frac{q}{1+q}\right)Au,
\partial_y^\beta u\right)\right|\\
&\qquad\leq
\left\|\partial_y^\beta\left(\frac{q}{1+q}\right)\right\|
\|Au\|_{L^3}\|\partial_y^\beta u\|_{L^6}
\leq CN(t)D(t).
\end{align*}
Here the coefficient norms are bounded by $CN(t)$, while
$\|Au\|_{L^3}\leq C\|\nabla u\|_{H^2}$ and
$\|\partial_y^\beta u\|_{L^6}
\leq C\|\nabla\partial_y^\beta u\|$. 

Finally, Lemma~\ref{lem:poisson} implies
\[
\|\nabla\partial_y^\beta\phi\|
\leq C\|q\|_{H^{|\beta|-1}},\qquad 1\leq|\beta|\leq3.
\]
Using the product estimate for derivatives of positive order,
\[
\|\partial_y^\beta(qu)\|
\leq C\bigl(\|u\|_{L^\infty}\|q\|_{H^3}
+\|q\|_{L^\infty}\|\nabla u\|_{H^2}\bigr)
\leq CN(t)D(t)^{1/2},
\]
we also have
$|(\nabla\partial_y^\beta\phi,\partial_y^\beta(qu))|\leq CN(t)D(t)$.
Substituting these bounds into \eqref{eq:tangential-identity}
and summing over $\beta$ proves \eqref{eq:tangential-energy}.
\end{proof}

To estimate normal derivatives of $q$, it is convenient to retain
its material derivative
\begin{equation}\label{eq:w}
w=q_t+u\cdot\nabla q=-(1+q)\Div u.
\end{equation}
This form avoids estimating a fourth derivative of $q$ in the
transport term. The normal component of the momentum equation gives
\begin{equation}\label{eq:normal-momentum}
\begin{aligned}
(2\mu+\lambda)\partial_3w+\gamma\partial_3q
={}&\partial_3\phi-\partial_tu_3
+\mu(\Delta_yu_3-\partial_3\Div_yu_y)\\
&-(2\mu+\lambda)\partial_3(q\Div u)+F_3.
\end{aligned}
\end{equation}

\begin{lemma}\label{lem:normal}
There exist fixed constants $0<\eta<1$ and $M_4>0$ such that
\begin{equation}\label{eq:H}
\begin{aligned}
\mathcal H(t)={}&\frac{2\mu+\lambda+\gamma}{2}
\sum_{j=0}^2\eta^j
\|\partial_y^{2-j}\partial_3^{j+1}q\|^2\\
&+\frac{M_4}{2}\sum_{1\leq|\beta|\leq3}
\left(\gamma\|\partial_y^\beta q\|^2
+\|\partial_y^\beta u\|^2
+\|\nabla\partial_y^\beta\phi\|^2\right)
\end{aligned}
\end{equation}
satisfies
\begin{equation}\label{eq:spatial-energy}
\begin{aligned}
\frac{\dd}{\dd t}\mathcal H
+c\bigl(\|q\|_{H^3}^2+\|\nabla u\|_{H^3}^2\bigr)
\leq C\bigl(\|\nabla^2u_t\|^2+\|q\|^2
+\|\nabla u\|^2\bigr)+CN(t)D(t).
\end{aligned}
\end{equation}
\end{lemma}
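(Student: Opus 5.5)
The idea is to derive, from the normal momentum equation \eqref{eq:normal-momentum}, a damped transport equation for the normal derivatives of $q$. I then combine these estimates with Lemma~\ref{lem:tangential} and the Stokes estimates of Lemma~\ref{lem:elliptic}, inducting on the number of normal derivatives.

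\emph{Step 1: damped transport equation.} Write $\partial_3w=\partial_3q_t+u\cdot\nabla\partial_3q+\partial_3u\cdot\nabla q$ in \eqref{eq:normal-momentum}. This gives
\[
(2\mu+\lambda)\bigl(\partial_t+u\cdot\nabla\bigr)\partial_3q+\gamma\partial_3q=R,
\]
where $R$ contains $\partial_3\phi$, $-\partial_tu_3$, the tangential viscous combination $\mu(\Delta_yu_3-\partial_3\Div_yu_y)$, and nonlinear terms. The tangential combination involves at most one normal derivative of $u$ per tangential derivative.

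\emph{Step 2: energy estimate for each normal order.} For $j=0,1,2$, apply $\partial_y^{2-j}\partial_3^{j}$ and test with $\partial_y^{2-j}\partial_3^{j+1}q$. The transport term is handled by $(u\cdot\nabla f,f)=-\frac12(\Div u,f^2)$ plus commutators of size $CN(t)D(t)^{1/2}$; no fourth derivatives of $q$ appear, which is why $w$ was kept. This yields
\[
\frac{\dd}{\dd t}\|\partial_y^{2-j}\partial_3^{j+1}q\|^2+c\|\partial_y^{2-j}\partial_3^{j+1}q\|^2
\leq C\|\partial_y^{2-j}\partial_3^{j}(\partial_3\phi,u_t,\text{tangential viscous part})\|^2+CN(t)D(t).
\]
The bounds on the right-hand side are as follows.
\begin{itemize}
\item \emph{Electric term.} By Lemma~\ref{lem:poisson}, $\|\nabla^3\phi\|\leq C\|\nabla q\|$. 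Its contribution at level $j$ involves lower derivatives of $q$, and I would interpolate and absorb them (for example, $\|\nabla^3\phi\|\leq\epsilon\|\nabla^3 q\|+C\|q\|$, or bound them by the tangential dissipation).
\item \emph{Time derivative term.} Since $\|\partial^3u_t\|$ is not available, I would instead commute the derivative order: using the equation for $u_t$, bound $\|\nabla^2u_t\|$ directly, leaving the term $C\|\nabla^2u_t\|^2$ on the right-hand side.
\item \emph{Viscous term.} For $j=0$, the tangential viscous part is bounded by the tangential dissipation $\|\nabla\partial_y^\beta u\|$ with $|\beta|\le3$.
\end{itemize}

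\emph{Step 3: Stokes recovery of $u$ at each level.} Rewrite the momentum equation as a Stokes system
\[
-\mu\Delta u+\nabla\bigl((\gamma/(2\mu+\lambda)) \text{-related pressure}\bigr)=\ldots,\qquad \Div u=-w/(1+q).
\]
More simply, take $\pi=\gamma q-(\mu+\lambda)\Div u$ with data $g=\nabla\phi-u_t+F$ and $h=\Div u$. Then \eqref{eq:stokes-estimate} with $m=2$ bounds $\|\nabla^4u\|$ by $\|\nabla^2u_t\|$, $\|\nabla^2 g\|$ and $\|\nabla^3\Div u\|$. Since $\Div u=-w/(1+q)$ and $w$ involves $q_t$, I would instead run the induction: the mixed norms $\|\partial_y^{2-j}\partial_3^{j+1}\nabla u\|$ are controlled by the Stokes/component equations in terms of $\partial_y^{2-j}\partial_3^{j+1}q$ (from $\partial_3\pi$) plus derivatives with one fewer normal derivative.

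\emph{Step 4: weighting and closing.} Choose the weights so that each level dominates the error generated at the next. The weight $\eta^j$ with $\eta$ small makes each higher normal level cheap relative to the previous one, and $M_4$ large makes the tangential energy of Lemma~\ref{lem:tangential} absorb all tangential terms. Summing the resulting inequalities gives $\|\nabla q\|_{H^2}^2+\|\nabla^2u\|_{H^2}^2$ in the dissipation. The lower-order norms $\|q\|^2$ and $\|\nabla u\|^2$ appear on the right-hand side, and $\|q\|_{H^3}$ follows.

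The main obstacle will be the ordering in Steps 2 and 3. The forcing at normal level $j$ contains $u$-derivatives with $j+1$ normal derivatives, and these are recovered only through $q$ at the same level via Stokes. The circularity must be broken by the damping $\gamma\partial_3 q$ and by smallness of $\eta$. I would first try the following ordering: normal level $j$ of $q$, then Stokes for $u$ at level $j$ (with small coefficient), then level $j+1$.
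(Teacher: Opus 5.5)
Your Steps 1--2 are essentially sound. The forcing $\partial_y^{2-j}\partial_3^ju_{3,t}$ is only a second derivative of $u_t$, so no commutation is needed, and testing with $q$ alone is harmless.

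The gap is in Steps 3--4. There you set aside the divergence datum $\Div u=-w-q\Div u$ and propose to control the mixed fourth derivatives of $u$ through $\partial_y^{2-j}\partial_3^{j+1}q$ ``from $\partial_3\pi$''. This does not work, for three reasons.
\begin{itemize}
\item In \eqref{eq:stokes-estimate} the pressure is an output. The velocity is controlled only through $g$ and $\nabla^{m+1}h$, and with $h=\Div u$ this is circular.
\item The component equations do not rescue this. They express $\partial_3^2u_3$ through $\partial_3q$, but $\partial_3^2u_y$ through $\nabla_yq$. At level $j=1$ the viscous forcing contains $\partial_y\partial_3^2\Div_yu_y$, i.e.\ $\partial_y^2\partial_3^2u_y$, which the $u_y$ equations turn into $\partial_y^3q$. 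That pure tangential third derivative is dissipated by none of your estimates: Lemma~\ref{lem:tangential} dissipates only $\nabla\partial_y^\beta u$. The same missing quantity blocks the conclusion $\|q\|_{H^3}$ in Step 4.
\item A same-level circularity, as you describe it, cannot be broken by $\eta$. The weight $\eta^j$ multiplies both sides of the level-$j$ inequality, so a level-$j$ forcing bounded by a fixed, non-small multiple of level-$j$ dissipation is never absorbed.
\end{itemize}

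The missing idea is that $w$ itself is the right divergence datum, and it makes the scheme strictly lower triangular. Its presence of $q_t$ is irrelevant, since $w=-(1+q)\Div u$ is a velocity quantity. Its normal derivatives $\partial_y^{2-k}\partial_3^{k+1}w$ appear with coefficient $2\mu+\lambda$ on the left of \eqref{eq:normal-momentum}. They are therefore controlled at level $k$: the paper tests with $q+w$ to obtain them as dissipation, and you could also read them off the differentiated equation. In addition, $\partial_y^3w$ is bounded by $\|\nabla\partial_y^3u\|$ plus a nonlinear term.

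Now apply \eqref{eq:stokes-estimate} to tangential derivatives of \eqref{eq:stokes-u}, with pressure $\gamma q-\phi$ and divergence $-w-q\Div u$.
\begin{itemize}
\item With $m=0$ on $\partial_y^2u$, you bound $\|\nabla^2\partial_y^2u\|$ by $\|\nabla\partial_y^3u\|$, $\|\partial_y^2\partial_3w\|$ and $\|\nabla^2u_t\|$.
\item With $m=1$ on $\partial_yu$, you bound $\|\nabla^3\partial_yu\|$ by the same quantities together with $\|\partial_y\partial_3^2w\|$.
\end{itemize}
Hence the level-$j$ forcing involves only tangential dissipation, $\|\nabla^2u_t\|$, and $w$ at levels $k<j$. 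This lower-triangular structure is exactly what the weights $\eta^j$ absorb.

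The same device finishes the proof. $\|\nabla^3w\|$ is bounded by these quantities, and the $m=2$ Stokes estimate then yields $\|\nabla^4u\|$ and $\|\nabla^3(\gamma q-\phi)\|$. Using $\|\nabla^3\phi\|\leq C\|\nabla q\|$ and interpolation, this gives $\partial_y^3q$ and hence $\|q\|_{H^3}$.
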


\begin{proof}
For $j=0,1,2$, apply $\partial_y^{2-j}\partial_3^j$ to
\eqref{eq:normal-momentum}. Take the inner product with
$\partial_y^{2-j}\partial_3^{j+1}(q+w)$ and sum over the
tangential multi-indices. The left-hand side of the resulting
identity is
\begin{align*}
&(2\mu+\lambda)
\|\partial_y^{2-j}\partial_3^{j+1}w\|^2
+\gamma\|\partial_y^{2-j}\partial_3^{j+1}q\|^2\\
&\qquad+(2\mu+\lambda+\gamma)
(\partial_y^{2-j}\partial_3^{j+1}w,
\partial_y^{2-j}\partial_3^{j+1}q).
\end{align*}
By \eqref{eq:w},
\begin{equation}\label{eq:normal-transport}
\begin{aligned}
\partial_y^{2-j}\partial_3^{j+1}w
={}&\partial_t\partial_y^{2-j}\partial_3^{j+1}q
+u\cdot\nabla\partial_y^{2-j}\partial_3^{j+1}q\\
&+[\partial_y^{2-j}\partial_3^{j+1},u\cdot\nabla]q.
\end{aligned}
\end{equation}
The commutator is bounded by
\[
\|[\partial_y^{2-j}\partial_3^{j+1},u\cdot\nabla]q\|
\leq C\bigl(\|\nabla u\|_{L^\infty}\|\nabla^3q\|
+\|\nabla^3u\|\|\nabla q\|_{L^\infty}\bigr)
\leq CN(t)D(t)^{1/2}.
\]
Consequently,
\begin{align*}
&(\partial_y^{2-j}\partial_3^{j+1}w,
\partial_y^{2-j}\partial_3^{j+1}q)\\
&\quad=\frac12\frac{\dd}{\dd t}
\|\partial_y^{2-j}\partial_3^{j+1}q\|^2
-\frac12(\Div u,|\partial_y^{2-j}\partial_3^{j+1}q|^2)\\
&\qquad+
([\partial_y^{2-j}\partial_3^{j+1},u\cdot\nabla]q,
\partial_y^{2-j}\partial_3^{j+1}q),
\end{align*}
where the last two terms have absolute value at most $CN(t)D(t)$.
The boundary flux vanishes because $u\cdot n=0$.
The electric and time derivative terms are estimated by
\begin{align*}
&\left|\left(
\partial_y^{2-j}\partial_3^j
(\partial_3\phi-\partial_tu_3),
\partial_y^{2-j}\partial_3^{j+1}(q+w)\right)\right|\\
&\qquad\leq\epsilon
\|\partial_y^{2-j}\partial_3^{j+1}(q,w)\|^2
+C_\epsilon\bigl(\|\nabla q\|^2+\|\nabla^2u_t\|^2\bigr),
\end{align*}
using $\|\nabla^3\phi\|\leq C\|\nabla q\|$.
Also, \eqref{eq:w} and \eqref{eq:products} give
\[
\|w\|_{H^3}\leq C\|\nabla u\|_{H^3}
+\|q\Div u\|_{H^3}\leq CD(t)^{1/2},
\]
so the nonlinear terms satisfy
\begin{align*}
&\left|\left(
\partial_y^{2-j}\partial_3^j
\bigl[F_3-(2\mu+\lambda)\partial_3(q\Div u)\bigr],
\partial_y^{2-j}\partial_3^{j+1}(q+w)\right)\right|\\
&\qquad\leq C\bigl(\|F\|_{H^2}
+\|q\Div u\|_{H^3}\bigr)
\bigl(\|q\|_{H^3}+\|w\|_{H^3}\bigr)\leq CN(t)D(t).
\end{align*}
The remaining terms contain fourth
derivatives of $u$ with at least one tangential derivative.
To estimate them, write the momentum and continuity equations as
\begin{equation}\label{eq:stokes-u}
\left\{
\begin{aligned}
&-\mu\Delta u+\nabla(\gamma q-\phi)
=-u_t+(\mu+\lambda)\nabla\Div u+F,\\
&\Div u=-w-q\Div u.
\end{aligned}
\right.
\end{equation}
Apply \eqref{eq:stokes-estimate} with $m=0$ to two tangential
derivatives of \eqref{eq:stokes-u}. Substitution of the divergence
equation on the right-hand side yields
\[
\|\nabla^2\partial_y^2u\|
\leq C\bigl(\|\partial_y^2u_t\|
+\|\nabla\partial_y^2w\|
+\|F\|_{H^2}+\|q\Div u\|_{H^3}\bigr).
\]
The purely tangential derivative of $w$ satisfies
\[
\|\partial_y^3w\|
\leq C\|\nabla\partial_y^3u\|
+\|q\Div u\|_{H^3}
\leq C\|\nabla\partial_y^3u\|+CN(t)D(t)^{1/2}.
\]
Separating the tangential and normal derivatives in
$\nabla\partial_y^2w$ therefore gives
\begin{equation}\label{eq:mixed-u-two}
\begin{aligned}
\|\nabla^2\partial_y^2u\|^2
\leq C\bigl(
\|\nabla\partial_y^3u\|^2
+\|\partial_y^2\partial_3w\|^2
+\|\nabla^2u_t\|^2\bigr)+CN(t)^2D(t).
\end{aligned}
\end{equation}
Similarly, applying \eqref{eq:stokes-estimate} with $m=1$ to
one tangential derivative gives
\[
\|\nabla^3\partial_yu\|
\leq C\bigl(\|\nabla\partial_yu_t\|
+\|\nabla^2\partial_yw\|
+\|F\|_{H^2}+\|q\Div u\|_{H^3}\bigr).
\]
The derivatives of $w$ on the right have zero, one, or two
normal derivatives. Hence
\begin{equation}\label{eq:mixed-u-one}
\begin{aligned}
\|\nabla^3\partial_yu\|^2
\leq C\bigl(
\|\nabla\partial_y^3u\|^2
+\|\partial_y^2\partial_3w\|^2
+\|\partial_y\partial_3^2w\|^2
+\|\nabla^2u_t\|^2\bigr)+CN(t)^2D(t).
\end{aligned}
\end{equation}
In the normal momentum equation, the differentiated viscous
terms are bounded as follows:
\begin{align*}
\|\partial_y^2(\Delta_yu_3-\partial_3\Div_yu_y)\|
&\leq C\|\nabla\partial_y^3u\|,\\
\|\partial_y\partial_3(\Delta_yu_3-\partial_3\Div_yu_y)\|
&\leq C\|\nabla^2\partial_y^2u\|,\\
\|\partial_3^2(\Delta_yu_3-\partial_3\Div_yu_y)\|
&\leq C\|\nabla^3\partial_yu\|.
\end{align*}
Pairing these terms with
$\partial_y^{2-j}\partial_3^{j+1}(q+w)$, and choosing
$\epsilon$ small relative to $2\mu+\lambda$ and $\gamma$,
we absorb all terms multiplied by $\epsilon$ into the
positive density and material-derivative terms on the left-hand side. By \eqref{eq:mixed-u-two} and
\eqref{eq:mixed-u-one}, this proves, for $j=0,1,2$,
\begin{equation}\label{eq:normal-triangular}
\begin{aligned}
&\frac{2\mu+\lambda+\gamma}{2}\frac{\dd}{\dd t}
\|\partial_y^{2-j}\partial_3^{j+1}q\|^2
+c\|\partial_y^{2-j}\partial_3^{j+1}(q,w)\|^2\\
&\quad\leq C\|\nabla\partial_y^3u\|^2
+C\sum_{k=0}^{j-1}
\|\partial_y^{2-k}\partial_3^{k+1}(q,w)\|^2
+C\bigl(\|\nabla q\|^2+\|\nabla^2u_t\|^2\bigr)+CN(t)D(t),
\end{aligned}
\end{equation}
where the sum is zero when $j=0$. The third-order derivatives
in the sum have strictly fewer normal derivatives than those on
the left.

Multiply \eqref{eq:normal-triangular} by $\eta^j$ and sum over
$j=0,1,2$. For each $k<2$,
$\sum_{j=k+1}^2\eta^j\leq\eta^{k+1}/(1-\eta)$.
Choosing $\eta>0$ so that $C\eta/(1-\eta)\leq c/2$
therefore absorbs the terms with $k<j$ into the weighted
normal dissipation. We then add $M_4$ times
\eqref{eq:tangential-energy}, with $M_4$ sufficiently large, to obtain
\begin{equation}\label{eq:H-preliminary}
\begin{aligned}
&\frac{\dd}{\dd t}\mathcal H
+c\sum_{1\leq|\beta|\leq3}\|\nabla\partial_y^\beta u\|^2
+c\sum_{j=0}^2\|\partial_y^{2-j}\partial_3^{j+1}(q,w)\|^2\\
&\qquad\leq C\bigl(\|\nabla q\|^2+\|\nabla^2u_t\|^2\bigr)
+CN(t)D(t).
\end{aligned}
\end{equation}
The constants $\eta$ and $M_4$ are independent of $T$.

It remains to recover the full spatial norms. The preceding bound
for $\partial_y^3w$ and the normal terms give
\begin{equation}\label{eq:w-third}
\|\nabla^3w\|^2
\leq C\left(
\|\nabla\partial_y^3u\|^2
+\sum_{j=0}^2\|\partial_y^{2-j}\partial_3^{j+1}w\|^2
\right)+CN(t)^2D(t).
\end{equation}
Apply \eqref{eq:stokes-estimate} with $m=2$ to
\eqref{eq:stokes-u}. Before using \eqref{eq:w-third}, it gives
\begin{align*}
\|\nabla^4u\|+\|\nabla^3(\gamma q-\phi)\|
&\leq C\bigl(\|\nabla^2u_t\|+\|\nabla^3w\|\\
&\qquad+\|F\|_{H^2}+\|q\Div u\|_{H^3}\bigr).
\end{align*}
Thus \eqref{eq:products} and \eqref{eq:w-third} imply
\begin{equation}\label{eq:u-fourth}
\begin{aligned}
\|\nabla^4u\|^2+\|\nabla^3(\gamma q-\phi)\|^2
\leq C\left(
\|\nabla\partial_y^3u\|^2
+\sum_{j=0}^2\|\partial_y^{2-j}\partial_3^{j+1}w\|^2
+\|\nabla^2u_t\|^2\right)+CN(t)^2D(t).
\end{aligned}
\end{equation}
Since $\|\nabla^3\phi\|\leq C\|\nabla q\|$, the interpolation
inequalities
\begin{equation}\label{eq:interpolation}
\begin{aligned}
&\|\nabla q\|^2\leq\epsilon\|\nabla^3q\|^2
+C_\epsilon\|q\|^2,\\
&\|\nabla^2u\|^2+\|\nabla^3u\|^2
\leq\epsilon\|\nabla^4u\|^2+C_\epsilon\|\nabla u\|^2
\end{aligned}
\end{equation}
imply
\begin{equation}\label{eq:full-spatial-recovery}
\begin{aligned}
\|q\|_{H^3}^2+\|\nabla u\|_{H^3}^2
\leq C\biggl(&
\sum_{1\leq|\beta|\leq3}\|\nabla\partial_y^\beta u\|^2
+\sum_{j=0}^2\|\partial_y^{2-j}\partial_3^{j+1}(q,w)\|^2\\
&+\|\nabla^2u_t\|^2+\|q\|^2+\|\nabla u\|^2\biggr)+CN(t)^2D(t).
\end{aligned}
\end{equation}
The part of $CN(t)^2D(t)$ containing spatial norms is absorbed for
small $\delta$. Its remaining part is bounded by $CN(t)D(t)$.
Use \eqref{eq:full-spatial-recovery} to replace a fixed part of
the dissipation in \eqref{eq:H-preliminary} by
$\|q\|_{H^3}^2+\|\nabla u\|_{H^3}^2$. This introduces only
the lower-order terms already displayed in
\eqref{eq:full-spatial-recovery}. The first inequality in
\eqref{eq:interpolation}, with $\epsilon$ sufficiently small,
then absorbs $\|\nabla q\|^2$ into the density dissipation.
This proves \eqref{eq:spatial-energy}.
\end{proof}

We are now in a position to combine the preceding estimates and
complete the a priori bound.

\begin{proposition}\label{prop:apriori}
There exist $\delta>0$ and $C>0$, independent of $T$, such that
every solution satisfying \eqref{eq:apriori-assumption} obeys
\begin{equation}\label{eq:apriori-conclusion}
N(t)^2+\int_0^tD(s)\dd s\leq CN(0)^2,
\qquad 0\leq t\leq T,
\end{equation}
provided $N(0)$ is sufficiently small.
\end{proposition}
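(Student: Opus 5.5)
We combine Lemmas~\ref{lem:basic}, \ref{lem:time}, \ref{lem:density}, \ref{lem:gradient-time} and \ref{lem:normal} into one Lyapunov functional. Set
\[
\mathcal L(t)=K_0\mathcal E_0+K_1\mathcal E_1+K_2\mathcal E_2+\kappa(u,\nabla q)+\mathcal H,
\]
with constants $1\ll\cdots$ fixed in a definite order. The dissipation on the left of \eqref{eq:spatial-energy} is $c(\|q\|_{H^3}^2+\|\nabla u\|_{H^3}^2)$. Its right-hand side is $C(\|\nabla^2u_t\|^2+\|q\|^2+\|\nabla u\|^2)$.

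The constants are chosen as follows. First choose $K_2$ large so that $K_2c\|\nabla^2u_t\|^2$ from \eqref{eq:gradient-time-energy} absorbs $C\|\nabla^2u_t\|^2$. The price is $CK_2\|\nabla u\|_{H^1}^2$. Its $\|\nabla^2u\|^2$ part is handled by interpolation \eqref{eq:interpolation} against $\|\nabla u\|_{H^3}^2$ from $\mathcal H$, with a $C\|\nabla u\|^2$ remainder. Next, $\kappa(u,\nabla q)$ with Lemma~\ref{lem:density} supplies $\kappa c\|q\|^2$; pick $\kappa$ large enough to absorb $C\|q\|^2$. Its error $C\kappa\|\nabla u\|_{H^1}^2$ is again interpolated. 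Then $K_1$ times \eqref{eq:time-energy} gives $\|\nabla u_t\|^2$. Finally choose $K_0$ largest, so that $K_0 a(u,u)\geq cK_0\|\nabla u\|^2$ absorbs all accumulated $\|\nabla u\|^2$ terms.

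This yields
\[
\frac{\dd}{\dd t}\mathcal L+c D(t)\leq CN(t)D(t).
\]
The missing $\|\nabla u_t\|_{H^1}^2$ in $D$ comes from $\|\nabla u_t\|^2$ and $\|\nabla^2u_t\|^2$. For small $\delta$ the right side is absorbed.

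The main obstacle is showing $\mathcal L\simeq N^2$, because $\mathcal E_2$ contains the indefinite correction $(\QQ u,Au)-\frac12a(u,u)$. I would bound it by
\[
|(\QQ u,Au)|+a(u,u)\leq C\|u\|\|\nabla^2u\|+C\|\nabla u\|^2\leq C\|u\|_{H^2}^2.
\]
Choosing $K_0$ and the weight $M_4$ in $\mathcal H$ large (after $K_2$) controls this, since $\mathcal E_0+\mathcal H$ bounds $\|u\|^2+\|\partial_y^\beta u\|^2$ for $1\leq|\beta|\leq3$.

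Normal derivatives of $u$, however, are not in $\mathcal L$ directly. I would recover them from the Lamé estimate \eqref{eq:lame-estimate}. With $Au=-u_t-\gamma\nabla q+\nabla\phi+F$, this gives
\[
\|\nabla^2u\|_{H^1}\leq C\bigl(\|u_t\|_{H^1}+\|q\|_{H^2}+\|\nabla^2\phi\|+\|F\|_{H^1}\bigr).
\]
By Lemma~\ref{lem:poisson} and \eqref{eq:products}, the right side is at most $C(\|u_t\|_{H^1}+\|q\|_{H^2})+CN^2$. Hence
\[
N^2\leq C\bigl(\|q\|_{H^3}^2+\|u\|^2+\|\nabla u\|^2+\|\nabla\phi\|^2+\|u_t\|_{H^1}^2\bigr),
\]
where $\|q\|_{H^3}^2$ needs $\mathcal E_0$ for $\|q\|^2$, $\mathcal H$ for tangential and normal derivatives, and Lemma~\ref{lem:basic} for mixed ones.

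Some mixed density derivatives, like $\partial_y\partial_3 q$ at second order, are only in $\mathcal H$ at top order. I would handle them by interpolation between $\|q\|$ and the third-order pieces, or by also adding lower-order versions. $\|u_t\|^2$ comes from $\mathcal E_1$, and $\|\nabla u_t\|^2$ from $\frac{K_2}{2}a(u_t,u_t)$ once $K_2$ dominates its own correction via $K_0,M_4$. The cross term $\kappa(u,\nabla q)$ is harmless with $\kappa\ll K_0$.

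Thus $c_1N^2\leq\mathcal L\leq C_1N^2$. Conversely, $\mathcal L\leq CN^2$ by direct bounds, using $\|\nabla\phi_t\|\leq C\|u\|$ and $\|q_t\|\leq CN$. Integrating in time gives $N(t)^2+\int_0^tD\leq C\mathcal L(0)\leq CN(0)^2$. This proves \eqref{eq:apriori-conclusion}.
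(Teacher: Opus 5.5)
Your combination and the order of constants in the dissipative inequality ($K_2$, then $\kappa$, then $K_1$, then $K_0$, with $\|\nabla^2u\|^2$ interpolated against $\|\nabla u\|_{H^3}^2$) are the paper's. However, the lower bound $\mathcal L\geq c_1N^2$ does not follow from what you wrote.

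You bound the correction in $K_2\mathcal E_2$ by $CK_2\|u\|_{H^2}^2$. You then say that large $K_0$ and $M_4$ control it, because $\mathcal E_0+\mathcal H$ bounds $\|u\|^2$ and $\|\partial_y^\beta u\|^2$. But no combination of $\|u\|$ and tangential derivatives, with any weights, controls $\|\partial_3u\|$ or $\|\partial_3^2u\|$. So enlarging $K_0$ or $M_4$ cannot work. Moreover, $\mathcal L$ contains $-\frac{K_2}{2}a(u,u)$, so nothing in $\mathcal L$ supplies $\|\partial_3u\|^2$.

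Your fallback, the Lam\'e bound with $\|u_t\|_{H^1}$ on the right, is circular at this point. It produces $CK_2\|\nabla u_t\|^2$. This carries the same weight as $\frac{K_2}{2}a(u_t,u_t)$, the only term in $\mathcal L$ that controls $\|\nabla u_t\|^2$, and $C$ is not small. So the claim that $\frac{K_2}{2}a(u_t,u_t)$ dominates ``its own correction via $K_0,M_4$'' is unproved.

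What is needed is a bound for the correction by quantities whose weights are chosen after $K_2$. Testing the momentum equation with $u$ gives
\[
a(u,u)\leq C\bigl(\|q\|^2+\|u\|^2+\|\nabla\phi\|^2+\|u_t\|^2\bigr).
\]
The $L^2$-level estimate
\[
\|\nabla^2u\|\leq C\|Au\|\leq C\bigl(\|u_t\|+\|\nabla q\|+\|\nabla\phi\|+N\|\nabla u\|\bigr)
\]
involves $u_t$ only in $L^2$; at this level the field enters through $\|\nabla\phi\|$, not $\|\nabla^2\phi\|$. Combining these with $\|\nabla q\|^2\leq\epsilon'\|\nabla^3q\|^2+C_{\epsilon'}\|q\|^2$ gives
\[
K_2\bigl|(\QQ u,Au)-\tfrac12a(u,u)\bigr|+\kappa|(u,\nabla q)|
\leq\epsilon\|\nabla^3q\|^2+C_{\epsilon,K_2,\kappa}\bigl(\|q\|^2+\|u\|^2+\|\nabla\phi\|^2+\|u_t\|^2\bigr),
\]
as in \eqref{eq:first-velocity-control}--\eqref{eq:cross-control}. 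Choose $\epsilon$ small relative to the fixed coefficient with which $\mathcal H$ controls every third derivative of $q$. Then choose $K_1$ to dominate $\|u_t\|^2$ and $K_0$ to dominate the rest. Only then is $\|\nabla u_t\|^2$ genuinely controlled by $\mathcal L$. At that stage your $H^1$-level Lam\'e argument recovers $\|u\|_{H^3}$ without circularity.
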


\begin{proof}
We first estimate the terms of indefinite sign in
$\mathcal E_2$ and $(u,\nabla q)$.
Taking the inner product of the momentum equation with $u$ yields
\[
a(u,u)=-(u_t,u)+\gamma(q,\Div u)+(\nabla\phi,u)+(F,u).
\]
The nonlinear terms satisfy
\begin{align*}
\left|\left(-u\cdot\nabla u+\frac{q}{1+q}Au,u\right)\right|
&\leq CN(t)\|\nabla u\|^2,\\
\left|\left(((1+q)^{\gamma-2}-1)\nabla q,u\right)\right|
&\leq CN(t)\|q\|\|\nabla u\|.
\end{align*}
For the viscous term in the first line, one integrates once and
uses $\|\nabla q\|_{L^3}\|u\|_{L^6}\leq CN(t)\|\nabla u\|$.
For the second line, one uses the pressure primitive in
\eqref{eq:pressure-primitive}. Young's inequality and the
smallness of $N(t)$ therefore give
\begin{equation}\label{eq:first-velocity-control}
a(u,u)\leq C\bigl(
\|q\|^2+\|u\|^2+\|\nabla\phi\|^2+\|u_t\|^2\bigr).
\end{equation}
At each fixed time, we have
\[
\|F\|\leq CN(t)\bigl(\|\nabla u\|+\|Au\|+\|\nabla q\|\bigr),
\]
so the momentum equation, after absorbing $CN(t)\|Au\|$, yields
\begin{equation}\label{eq:Au-control}
\|Au\|\leq C\bigl(
\|u_t\|+\|\nabla q\|+\|\nabla\phi\|
+N(t)\|\nabla u\|\bigr).
\end{equation}
Consequently, for every $\epsilon>0$,
\begin{equation}\label{eq:cross-control}
\begin{aligned}
|(\QQ u,Au)|+a(u,u)+|(u,\nabla q)|
\leq{}&\epsilon\|\nabla^3q\|^2\\
&+C_\epsilon\bigl(
\|q\|^2+\|u\|^2+\|\nabla\phi\|^2+\|u_t\|^2\bigr).
\end{aligned}
\end{equation}
Indeed, $\|\QQ u\|\leq\|u\|$ and
\eqref{eq:Au-control} first bound the two mixed products by
$C(\|u\|^2+\|u_t\|^2+\|\nabla\phi\|^2+\|\nabla q\|^2)
+CN(t)^2\|\nabla u\|^2$.
The last term is controlled by \eqref{eq:first-velocity-control},
and interpolation gives
\[
\|\nabla q\|^2\leq\epsilon\|\nabla^3q\|^2+C_\epsilon\|q\|^2.
\]

We now combine the preceding estimates. Let
\begin{equation}\label{eq:combined-energy}
\mathcal E
=\mathcal H+M_2\mathcal E_2+M_3(u,\nabla q)
+M_1\mathcal E_1+M_0\mathcal E_0.
\end{equation}
Choose $M_2$ and $M_3$ sufficiently large so that
\eqref{eq:gradient-time-energy} and \eqref{eq:density-energy}
absorb the terms $C\|\nabla^2u_t\|^2$ and $C\|q\|^2$ in
\eqref{eq:spatial-energy}. After these choices, the remaining
linear error is bounded by a fixed multiple of
$\|\nabla u\|_{H^1}^2$. Using
\[
\|\nabla u\|_{H^1}^2
\leq\epsilon\|\nabla u\|_{H^3}^2
+C_\epsilon\|\nabla u\|^2,
\]
we absorb its higher derivatives into the left-hand side of
\eqref{eq:spatial-energy}. Choose $M_1$ large enough to control
the $\|u_t\|^2$ terms in \eqref{eq:cross-control}, and then
choose $M_0$ large enough to control the remaining lower-order
terms and absorb the error $C\|\nabla u\|^2$.

The energy $\mathcal H$ controls every third derivative of $q$:
the pure tangential derivatives occur in the second line of
\eqref{eq:H}, and those with a normal derivative occur in the
first line. Thus \eqref{eq:cross-control}, with $\epsilon$
chosen after $M_2$ and $M_3$, proves positivity of
\eqref{eq:combined-energy}. To recover the full $H^3$ norm of
$u$, we first estimate one derivative of $F$ directly from
\eqref{eq:F}:
\begin{equation}\label{eq:F-gradient-pointwise}
\|\nabla F\|\leq CN(t)\bigl(\|u\|_{H^3}+\|q\|_{H^2}\bigr).
\end{equation}
Here the differentiated viscous coefficient is controlled in
$L^\infty$, while $\nabla Au$ is controlled by $\nabla^3u$;
the convective and pressure terms follow from the product estimates.
Applying \eqref{eq:lame-estimate} to the momentum equation and
using $\|\nabla^2\phi\|\leq C\|q\|$, we obtain
\begin{equation}\label{eq:velocity-H3}
\|\nabla^3u\|
\leq C\bigl(\|\nabla u_t\|+\|q\|_{H^2}\bigr)
+CN(t)\bigl(\|u\|_{H^3}+\|q\|_{H^2}\bigr).
\end{equation}
Interpolation and the smallness of $N(t)$ imply
\[
\|u\|_{H^3}
\leq C\bigl(\|u\|+\|\nabla u_t\|+\|q\|_{H^2}\bigr).
\]
Together with Lemma~\ref{lem:poisson},
\eqref{eq:field-time-bound} and \eqref{eq:qt-estimates},
this proves
\begin{equation}\label{eq:energy-equivalence}
cN(t)^2\leq\mathcal E(t)\leq CN(t)^2.
\end{equation}

Adding \eqref{eq:basic-energy}, \eqref{eq:time-energy},
\eqref{eq:density-energy}, \eqref{eq:gradient-time-energy} and
\eqref{eq:spatial-energy} with the chosen weights gives
\begin{equation}\label{eq:closed-energy}
\frac{\dd}{\dd t}\mathcal E+cD(t)\leq CN(t)D(t).
\end{equation}
Choose $\delta$ in \eqref{eq:apriori-assumption} small enough
that $C\delta\leq c/2$. Integrating \eqref{eq:closed-energy},
we obtain
\begin{equation}\label{eq:integrated-energy}
\mathcal E(t)+\frac c2\int_0^tD(s)\dd s\leq\mathcal E(0).
\end{equation}
The equivalence \eqref{eq:energy-equivalence} now yields
\eqref{eq:apriori-conclusion}.
\end{proof}

\section{Uniform estimates in exterior domains}\label{sec:exterior}

We next consider \eqref{eq:NSP} in a fixed smooth connected exterior
domain $\Omega=\R^3\setminus\overline{\mathcal O}$, where $\mathcal O$
is bounded. The boundary conditions are
\begin{equation}\label{eq:ext-boundary}
u=0,\qquad \partial_n\phi=0\quad\text{on }\partial\Omega,
\qquad \phi(t)\in\dot H^1(\Omega).
\end{equation}
We use the same notation as in Sections~\ref{sec:preliminaries}--\ref{sec:estimates}.

\begin{lemma}\label{lem:ext-low}
Assume \eqref{eq:apriori-assumption}. The definitions
\eqref{eq:E0}, \eqref{eq:E1} and \eqref{eq:E2} of
$\mathcal E_0,\mathcal E_1,\mathcal E_2$ are retained.
The estimates \eqref{eq:qt-estimates}, \eqref{eq:products} and
\eqref{eq:Ft-bound} hold in the exterior domain, and
\begin{align}
&\frac{\dd}{\dd t}\mathcal E_0+a(u,u)=0,
\label{eq:ext-basic}
\\
&\frac{\dd}{\dd t}\mathcal E_1+c\|\nabla u_t\|^2
\leq CN(t)D(t),
\label{eq:ext-time}
\\
&\frac{\dd}{\dd t}(u,\nabla q)+c\|q\|_{H^1}^2
\leq C\|\nabla u\|_{H^1}^2+CN(t)D(t),
\label{eq:ext-density}
\\
&\frac{\dd}{\dd t}\mathcal E_2+c\|\nabla^2u_t\|^2
\leq C\bigl(\|\nabla u\|_{H^1}^2+\|\nabla u_t\|^2\bigr)
+CN(t)D(t).
\label{eq:ext-gradient-time}
\end{align}
\end{lemma}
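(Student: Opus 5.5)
The plan is to repeat the half-space arguments of Lemmas~\ref{lem:basic}, \ref{lem:time}, \ref{lem:density} and~\ref{lem:gradient-time}. The identities themselves never used flatness of the boundary, only $u=0$, $\partial_n\phi=0$ and the weak form of the Poisson problem. What changes in the exterior domain are the auxiliary tools.

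\textbf{Auxiliary bounds.} First, \eqref{eq:qt-estimates}, \eqref{eq:products} and \eqref{eq:Ft-bound} use only the Sobolev inequalities \eqref{eq:sobolev}, the product and commutator bounds \eqref{eq:sobolev-product}--\eqref{eq:sobolev-commutator}, and $\|q\|_{L^\infty}\le1/2$. All of these hold on the exterior domain by extension. Wherever the half-space proof invoked Lemma~\ref{lem:poisson}, I would use Lemma~\ref{lem:ext-poisson} instead, in particular $\|\nabla\phi_t\|_{L^6}\le C\|q_t\|$ from \eqref{eq:ext-poisson-time}. Identity \eqref{eq:field-time} was derived for general $\Omega$.

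\textbf{The three lower-order estimates.} The basic identity \eqref{eq:ext-basic} follows verbatim, since only $u|_{\partial\Omega}=0$ and the self-adjointness of $\QQ$ enter. For \eqref{eq:ext-time}, I would differentiate \eqref{eq:perturbation} in time and test with $(\gamma q_t,u_t)$. All boundary terms vanish because $u_t=0$, and each nonlinear term is bounded exactly as in Lemma~\ref{lem:time}; the electric term uses \eqref{eq:ext-poisson-time}. For \eqref{eq:ext-density}, testing the momentum equation with $\nabla q$ uses $(\nabla\phi,\nabla q)=-\|q\|^2$ from the Neumann condition, and $(u,\nabla q_t)=-(\Div u,q_t)$. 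The term $(Au,\nabla q)$ is bounded by $\|\nabla^2u\|\|\nabla q\|$, so no elliptic estimate is needed here.

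\textbf{The estimate \eqref{eq:ext-gradient-time}.} This is the step needing care. The projection identity \eqref{eq:projection-energy-identity} is purely algebraic: it uses $\PP u_t=-\PP Au+\PP F$, \eqref{eq:field-time}, and $(u_t,Au)=a(u,u_t)$ for $u_t\in H^1_0$. It therefore holds in the exterior domain, and the boundary integral in \eqref{eq:electric-boundary} is never computed explicitly. Testing the differentiated momentum equation with $Au_t$ gives \eqref{eq:gradient-time-identity} unchanged. The error terms are estimated as before, with one exception. To convert $\|Au_t\|^2$ into $\|\nabla^2u_t\|^2$, I can only use \eqref{eq:ext-lame}, which gives
\[
\|\nabla^2u_t\|^2\le C\bigl(\|Au_t\|^2+\|\nabla u_t\|^2\bigr).
\]
This lower-order term is exactly the extra $C\|\nabla u_t\|^2$ on the right of \eqref{eq:ext-gradient-time}.

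\textbf{Main obstacle.} The main point to check is that every bound used for the $\mathcal E_2$ estimate stays valid without homogeneous elliptic estimates, that is, that only $\|Au\|$ (never $\|\nabla^2u\|$ through the Lamé estimate) is needed on the right. Here $\|\PP Au\|^2\le\|Au\|^2\le C\|\nabla^2u\|^2$ holds trivially. The bound $\|Au_t\|\le C\|\nabla^2 u_t\|$, used in estimating $F_t$ against $Au_t$, is also trivial. So the only genuinely new term is the $\|\nabla u_t\|^2$ coming from the lower-order part of \eqref{eq:ext-lame}.
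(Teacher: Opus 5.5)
Your proposal is correct and follows the paper's proof essentially step for step. The product bounds and the three lower-order estimates carry over using only $u=0$, $\partial_n\phi=0$ and Lemma~\ref{lem:ext-poisson}, and the projection identity \eqref{eq:projection-energy-identity} is reused unchanged. The only new term, $C\|\nabla u_t\|^2$, comes from the lower-order part of the exterior Lam\'e estimate \eqref{eq:ext-lame}.
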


\begin{proof}
The product estimates preceding Lemma~\ref{lem:basic} use only the
Sobolev inequalities and remain valid. The proofs of
Lemmas~\ref{lem:basic}, \ref{lem:time} and \ref{lem:density}
use $u=0$, $\partial_n\phi=0$, and the electric estimate
$\|\nabla\phi_t\|_{L^6}\leq C\|q_t\|$.
Lemma~\ref{lem:ext-poisson} therefore gives exactly the same
identities and estimates.

The projection identity \eqref{eq:projection-energy-identity}
also remains valid. Its derivation uses the $L^2$ orthogonality
of $\PP,\QQ$, the identity \eqref{eq:field-time}, and
$(u_t,Au)=a(u,u_t)$. It does not require commutation of a projection
with $A$ or with local tangential derivatives.
Thus \eqref{eq:gradient-time-identity} gives
\[
\frac{\dd}{\dd t}\mathcal E_2+c\|Au_t\|^2
\leq C\|\nabla u\|_{H^1}^2+CN(t)D(t).
\]
Using \eqref{eq:ext-lame} introduces the additional term
$C\|\nabla u_t\|^2$ in \eqref{eq:ext-gradient-time}.
This term is controlled by \eqref{eq:ext-time}.
\end{proof}

Use the coordinates and operators in Lemma~\ref{lem:normal-coordinates}.

\begin{lemma}\label{lem:ext-tangential}
Under \eqref{eq:apriori-assumption}, for every $\epsilon>0$,
\begin{equation}\label{eq:ext-tangential-energy}
\begin{aligned}
&\frac12\frac{\dd}{\dd t}\sum_L
\bigl(\gamma\|Lq\|^2+\|Lu\|^2+\|\nabla L\phi\|^2\bigr)
+c\sum_L\|\nabla Lu\|^2
\\
&\quad\leq
\epsilon\bigl(\|q\|_{H^3}^2+\|\nabla u\|_{H^3}^2\bigr)
+C_\epsilon\bigl(\|q\|^2+\|\nabla u\|^2\bigr)
+CN(t)D(t).
\end{aligned}
\end{equation}
Here $\sum_L$ denotes the finite sum over
\eqref{eq:localized-operators}.
\end{lemma}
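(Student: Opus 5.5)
We adapt the proof of Lemma~\ref{lem:tangential}, now applied to the localized operators $L$ of \eqref{eq:localized-operators}. The new feature is the commutators of $L$ with the differential operators in \eqref{eq:perturbation}. These are supported in a fixed bounded set and are of lower order, so they produce the $\epsilon$ and $C_\epsilon$ terms on the right.

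For each $L$ we apply it to \eqref{eq:perturbation}:
\[
\begin{aligned}
&(Lq)_t+\Div Lu=-L\Div(qu)+[\Div,L]u,\\
&(Lu)_t+A(Lu)+\gamma\nabla Lq-\nabla L\phi=LF+[A,L]u+\gamma[\nabla,L]q-[\nabla,L]\phi,
\end{aligned}
\]
together with $\Delta L\phi=Lq+[\Delta,L]\phi$. By Lemma~\ref{lem:normal-coordinates}, $Lu=0$ and $\partial_n L\phi=0$ on $\partial\Omega$. We test the first equation with $\gamma Lq$ and the second with $Lu$. The terms $\gamma(\Div Lu,Lq)+\gamma(\nabla Lq,Lu)$ cancel, and $(Lu,\nabla Lq)$ carries no boundary term.

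For the electric term we write $-(\nabla L\phi,Lu)=(L\phi,\Div Lu)$. We replace $\Div Lu$ using the continuity equation and then use the localized Poisson equation, $(L\phi,(Lq)_t)=-\frac12\frac{\dd}{\dd t}\|\nabla L\phi\|^2-([\Delta,L]\phi,L\phi_t)$ up to sign conventions. Alternatively, one applies \eqref{eq:field-time} to $L\phi$, which is cleaner: $\nabla L\phi_t=L\nabla\phi_t+[\nabla,L]\phi_t$, and $\nabla\phi_t=-\QQ((1+q)u)$. The error terms are $([\nabla,L]\phi_t,\nabla L\phi)$, the mismatch between $L\QQ$ and $\QQ L$, and $[\nabla,L]\phi$. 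By Lemma~\ref{lem:ext-poisson} and \eqref{eq:field-time-bound}, all of these are bounded by local norms of $\nabla^{\le 3}\phi$ and $\nabla^{\le 3}\phi_t$ on a bounded set. Those norms are controlled by $\|q\|_{H^2}$ and $\|u\|_{H^2}$. The key bound is \eqref{eq:ext-local-field}, which gives $\|\nabla\phi\|_{L^2(K)}\le C\|q\|$ and so avoids the non-dissipated quantity $\|\nabla\phi\|$.

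The nonlinear terms $LF$ and $L\Div(qu)$ are handled exactly as in Lemma~\ref{lem:tangential}: symmetrize the transport term, use the pressure primitive \eqref{eq:pressure-primitive}, integrate the principal viscous part by parts (no boundary term, since $Lu=0$), and apply the commutator estimates \eqref{eq:sobolev-commutator}. Chart coefficients only add lower-order factors. This gives $CN(t)D(t)$.

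The linear commutators are the main point. The term $([A,L]u,Lu)$ involves up to $k+1\le4$ derivatives of $u$ on a compact set. We integrate by parts once to move one derivative onto $Lu$, which is legitimate because $Lu=0$ on the boundary, obtaining a bound $C\|u\|_{H^k(K)}\|\nabla Lu\|$. Absorbing $\|\nabla Lu\|$ costs a small multiple, and $\|u\|_{H^3(K)}$ is bounded via Gagliardo–Nirenberg interpolation, $\|u\|_{H^3(K)}^2\le\epsilon\|\nabla u\|_{H^3}^2+C_\epsilon\|\nabla u\|^2$. The zero-order part is handled with \eqref{eq:ext-local-sobolev}. The terms $(\gamma[\nabla,L]q,Lu)$ and $([\Div,L]u,\gamma Lq)$ are bounded by $\|q\|_{H^2}\|u\|_{H^3}$-type products and interpolated the same way between $\|q\|_{H^3}$ and $\|q\|$. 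The potential commutators are treated likewise via Lemma~\ref{lem:ext-poisson}.

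The main obstacle is the order-$(k+1)$ viscous commutator $[A,L]$ when $|\beta|=3$. Bounding it naively requires fourth derivatives of $u$ in $L^2$ paired with $\|Lu\|$, which interpolation still handles as $\epsilon\|\nabla u\|_{H^3}^2+C_\epsilon\|\nabla u\|^2$. We have to check that no fourth-order term of $q$ appears, which holds because $[\nabla,L]q$ has order $k\le3$. Summing over $L$ gives \eqref{eq:ext-tangential-energy}.
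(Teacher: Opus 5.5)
Your overall route is the paper's. You apply $L$, test with $\gamma Lq$ and $Lu$, treat the nonlinear terms as in Lemma~\ref{lem:tangential}, and bound the compactly supported commutators by Young's inequality, interpolation and \eqref{eq:ext-local-sobolev}. Your first treatment of the electric term (integrate by parts, insert the localized continuity equation, use the localized Poisson equation) is \eqref{eq:ext-electric-transformation}.

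\textbf{The gap: the $\phi_t$-terms are bounded with the wrong estimate.} You cite \eqref{eq:field-time-bound} and say the local norms of $\phi_t$ are controlled by $\|u\|_{H^2}$. This gives $\|\nabla\phi_t\|_{L^2(K)}\le C\|u\|$. Because $\QQ$ is nonlocal, this is the global $L^2$ norm of $u$, which is neither dissipated nor admissible on the right of \eqref{eq:ext-tangential-energy}. A term such as $C\|q\|_{H^2}\|u\|$ cannot be split into
\[
\epsilon\bigl(\|q\|_{H^3}^2+\|\nabla u\|_{H^3}^2\bigr)+C_\epsilon\bigl(\|q\|^2+\|\nabla u\|^2\bigr)+CN(t)D(t).
\]
It would also break Proposition~\ref{prop:ext-apriori}, where no $\|u\|^2$ error can be absorbed. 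You correctly avoid $\|\nabla\phi\|$ through \eqref{eq:ext-local-field}, and you need the same idea for $\phi_t$. By \eqref{eq:ext-poisson-time} and H\"older on $K$, $\|\nabla\phi_t\|_{L^2(K)}\le C\|q_t\|$. Higher derivatives of $\phi_t$ are bounded by $C\|q_t\|_{H^{k-1}}$ via Lemma~\ref{lem:ext-poisson}. Then \eqref{eq:qt-estimates} gives $\|q_t\|_{H^2}\le C\|\nabla u\|_{H^2}+CN(t)\|q\|_{H^3}$, which is how the paper proceeds.

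\textbf{The $\QQ$-based alternative is unproven.} In an exterior domain $\QQ$ does not commute with $L$, and $[L,\QQ]$ is nonlocal. So it is not bounded by local norms of $\phi$ and $\phi_t$ through Lemma~\ref{lem:ext-poisson} alone. It can be repaired as follows. Write $\QQ v=\nabla\psi$ and
\[
L\QQ v-\QQ Lv=\nabla\zeta+[L,\nabla]\psi,
\]
where $\zeta\in\dot H^1$ solves $\Delta\zeta=[L,\Div]v-[L,\Delta]\psi$ with $\partial_n\zeta=0$. Its source is supported in $K$, and testing gives $\|\nabla\zeta\|\le C\|\Delta\zeta\|_{L^{6/5}}$. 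You do not supply this argument, and the paper's direct route avoids $\QQ$ entirely.

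\textbf{Two smaller slips in your first route.}
\begin{itemize}
\item The identity should read $(L\phi,Lq_t)=-\frac12\frac{\dd}{\dd t}\|\nabla L\phi\|^2+(L\phi,[L,\Delta]\phi_t)$. The commutator acts on $\phi_t$, not on $\phi$.
\item The term $(L\phi,L\Div(qu))$ from the continuity equation must be rewritten as $-(\nabla L\phi,L(qu))+(L\phi,[L,\Div](qu))$. Otherwise, for $L=\chi_0\partial_i$ the factor $\|L\phi\|$ is controlled only by the undissipated $\|\nabla\phi\|$, giving $N(t)^2D(t)^{1/2}$ rather than $CN(t)D(t)$.
\end{itemize}
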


\begin{proof}
Applying $L$ to the fluid equations and testing by
$\gamma Lq$ and $Lu$, respectively, gives
\begin{align}
&\frac12\frac{\dd}{\dd t}
\bigl(\gamma\|Lq\|^2+\|Lu\|^2\bigr)+a(Lu,Lu)
\nonumber
\\
&\quad=(L\nabla\phi,Lu)+(LF,Lu)
-\gamma(L\Div(qu),Lq)+([A,L]u,Lu)
\nonumber
\\
&\qquad\quad-\gamma([L,\nabla]q,Lu)
-\gamma([L,\Div]u,Lq).
\label{eq:ext-tangential-identity}
\end{align}
The transport decomposition, pressure primitive and viscous expansion
in Lemma~\ref{lem:tangential} apply to the nonlinear terms.
Differentiating a chart coefficient or a cutoff produces a bounded
coefficient and a lower-order product. The sum of these nonlinear
terms is bounded by $CN(t)D(t)$.

For the linear geometric terms, the commutator orders stated after
Lemma~\ref{lem:normal-coordinates} give
\begin{align*}
&\sum_L|([A,L]u,Lu)|
\leq C\|\nabla u\|_{H^3}\|\nabla u\|_{H^2},
\\
&\sum_L\bigl(|([L,\nabla]q,Lu)|+|([L,\Div]u,Lq)|\bigr)
\leq C\|q\|_{H^3}\|\nabla u\|_{H^2}.
\end{align*}
Terms involving $u$ without a derivative are supported in a fixed
bounded set and satisfy \eqref{eq:ext-local-sobolev}. Young's
inequality and Sobolev interpolation therefore bound the preceding
sum by
\begin{equation}\label{eq:ext-geometric-error}
\epsilon\bigl(\|q\|_{H^3}^2+\|\nabla u\|_{H^3}^2\bigr)
+C_\epsilon\bigl(\|q\|^2+\|\nabla u\|^2\bigr).
\end{equation}

We transform the electric force by the continuity and Poisson
equations. Since $Lu=0$ and $\partial_nL\phi_t=0$ on the boundary,
\begin{equation}\label{eq:ext-electric-transformation}
\begin{aligned}
&(L\nabla\phi,Lu)
={}-(L\phi,\Div Lu)+([L,\nabla]\phi,Lu)
\\
&\quad ={}(L\phi,Lq_t)+(L\phi,L\Div(qu))
+(L\phi,[L,\Div]u)
\\
&\quad +([L,\nabla]\phi,Lu)
\\
&\quad ={}(L\phi,\Delta L\phi_t)+(L\phi,[L,\Delta]\phi_t)
-(\nabla L\phi,L(qu))
\\
&\quad +(L\phi,[L,\Div](qu))+(L\phi,[L,\Div]u)
+([L,\nabla]\phi,Lu)
\\
&\quad ={}-\frac12\frac{\dd}{\dd t}\|\nabla L\phi\|^2
-(\nabla L\phi,L(qu))+\mathcal R_L,
\end{aligned}
\end{equation}
where
\begin{equation}\label{eq:ext-electric-remainder}
\begin{aligned}
&\mathcal R_L={}(L\phi,[L,\Delta]\phi_t)
+(L\phi,[L,\Div](qu))
\\
&\quad +(L\phi,[L,\Div]u)+([L,\nabla]\phi,Lu).
\end{aligned}
\end{equation}
In the third line of \eqref{eq:ext-electric-transformation} we used
$L\Delta\phi_t=\Delta L\phi_t+[L,\Delta]\phi_t$ and
$L\Div(qu)=\Div L(qu)+[L,\Div](qu)$.
The integration involving $L(qu)$ has zero boundary term because
$L(qu)=0$ on $\partial\Omega$.

Let $K$ contain all commutator supports and let $L$ have order $k$,
$1\leq k\leq3$. Lemma~\ref{lem:ext-poisson} and
\eqref{eq:ext-local-sobolev} imply
\begin{equation}\label{eq:ext-electric-commutators}
\begin{aligned}
&\|\nabla L\phi\|\leq C\|q\|_{H^{k-1}},\qquad
\|L\phi\|_{L^2(K)}+\|[L,\nabla]\phi\|
\leq C\|q\|_{H^{\max\{k-2,0\}}},
\\
&\|[L,\Delta]\phi_t\|\leq C\|q_t\|_{H^{k-1}},
\qquad \|[L,\Div]u\|\leq C\|\nabla u\|_{H^{k-1}},
\\
&\|[L,\Div](qu)\|+\|L(qu)\|
\leq CN(t)D(t)^{1/2}.
\end{aligned}
\end{equation}
For example, $[L,\Delta]\phi_t$ contains derivatives of $\phi_t$
of orders $1$ through $k+1$. Its first derivatives on $K$ are
bounded by $C\|q_t\|$, and its higher derivatives are bounded by
$C\|q_t\|_{H^{k-1}}$. Thus all its terms have the stated bound.
The product estimate in the last line follows by placing $u$ or
$q$ in $L^\infty$, and using \eqref{eq:ext-local-sobolev} for
local zero-order velocity terms.

Consequently,
\begin{align*}
&\sum_L\bigl(|\mathcal R_L|
+|(\nabla L\phi,L(qu))|\bigr)
\leq C\|q\|_{H^2}
\bigl(\|q_t\|_{H^2}+\|\nabla u\|_{H^2}\bigr)
+CN(t)D(t)
\\
&\quad \leq C\|q\|_{H^2}\|\nabla u\|_{H^2}+CN(t)D(t)
\\
&\quad \leq\epsilon\bigl(\|q\|_{H^3}^2+\|\nabla u\|_{H^3}^2\bigr)
+C_\epsilon\bigl(\|q\|^2+\|\nabla u\|^2\bigr)
+CN(t)D(t).
\end{align*}
Here \eqref{eq:qt-estimates} bounds
$\|q_t\|_{H^2}$ by
$C\|\nabla u\|_{H^2}+CN(t)\|q\|_{H^3}$.
Substitution of \eqref{eq:ext-electric-transformation} in
\eqref{eq:ext-tangential-identity}, followed by summation and
\eqref{eq:ext-geometric-error}, proves
\eqref{eq:ext-tangential-energy}.
\end{proof}

Applying Lemma~\ref{lem:ext-stokes} to \eqref{eq:stokes-u} gives
\begin{equation}\label{eq:ext-stokes-u}
\begin{aligned}
&\|\nabla^4u\|+\|\nabla^3(\gamma q-\phi)\|
\leq C\bigl(\|\nabla u_t\|_{H^1}+\|\nabla^3w\|
\\
&\quad +\|\nabla u\|_{H^2}+\|q\|_{H^2}\bigr)
+CN(t)D(t)^{1/2}.
\end{aligned}
\end{equation}

For \eqref{eq:stokes-u}, use
$\Div u=-w-q\Div u$ and \eqref{eq:products}.
The localized zero-order time derivative satisfies
\[
\|u_t\|_{L^2(K)}\leq C_K\|u_t\|_{L^6}
\leq C_K\|\nabla u_t\|.
\]
Also,
\[
\|\nabla(\gamma q-\phi)\|_{H^1(K)}
\leq C\|q\|_{H^2}
\]
by Lemma~\ref{lem:ext-poisson}.
These bounds prove \eqref{eq:ext-stokes-u}.

\begin{lemma}\label{lem:ext-mixed}
Under \eqref{eq:apriori-assumption}, the mixed derivatives satisfy
\begin{equation}\label{eq:ext-mixed-two}
\begin{aligned}
&\sum_\ell\|\chi_\ell\nabla^2\partial_y^2u\|^2
\leq{}C\sum_L\|\nabla Lu\|^2
+C\sum_\ell\|\chi_\ell\partial_y^2\partial_sw\|^2
\\
&\quad +C\bigl(\|\nabla u_t\|_{H^1}^2+\|q\|_{H^2}^2
+\|\nabla u\|_{H^2}^2\bigr)+CN(t)^2D(t),
\end{aligned}
\end{equation}
and
\begin{equation}\label{eq:ext-mixed-one}
\begin{aligned}
&\sum_\ell\|\chi_\ell\nabla^3\partial_yu\|^2
\leq{}C\sum_L\|\nabla Lu\|^2
\\
&\quad +C\sum_\ell\bigl(
\|\chi_\ell\partial_y^2\partial_sw\|^2
+\|\chi_\ell\partial_y\partial_s^2w\|^2\bigr)
\\
&\quad +C\bigl(\|\nabla u_t\|_{H^1}^2+\|q\|_{H^2}^2
+\|\nabla u\|_{H^2}^2\bigr)+CN(t)^2D(t).
\end{aligned}
\end{equation}
\end{lemma}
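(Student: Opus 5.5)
The plan is to imitate the half-space argument leading to \eqref{eq:mixed-u-two} and \eqref{eq:mixed-u-one}, now applying the localized Stokes estimates to the cut-off tangential derivatives $\chi_\ell\partial_y^\beta u$. In each boundary chart of Lemma~\ref{lem:normal-coordinates}, set $v=\chi_\ell\partial_y^\beta u$ with $|\beta|=2$ for \eqref{eq:ext-mixed-two} and $|\beta|=1$ for \eqref{eq:ext-mixed-one}. Since $\partial_y^\beta$ is expressed in Cartesian form through the chart coefficients, $v=0$ on $\partial\Omega$. Applying $\chi_\ell\partial_y^\beta$ to \eqref{eq:stokes-u} gives a Dirichlet Stokes system for $v$ with pressure $\pi=\chi_\ell\partial_y^\beta(\gamma q-\phi)$. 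The data are
\[
g=\chi_\ell\partial_y^\beta\bigl(-u_t+(\mu+\lambda)\nabla\Div u+F\bigr)+[\text{commutators}],
\qquad
h=\chi_\ell\partial_y^\beta\Div u+[L,\Div]u .
\]
The commutators $[-\mu\Delta,\chi_\ell\partial_y^\beta]u$ and $[\nabla,\chi_\ell\partial_y^\beta](\gamma q-\phi)$ have order at most $|\beta|+1$ in $u$ and $|\beta|$ in $q,\phi$, and they are supported in $K$.

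For $|\beta|=2$, use the local Dirichlet--Stokes estimate of order $m=0$ on nested neighborhoods, as in the proof of Lemma~\ref{lem:ext-stokes}:
\[
\|\nabla^2v\|\le C\bigl(\|g\|+\|\nabla h\|+\text{local lower-order terms}\bigr).
\]
The bounds on each piece are as follows.
\begin{itemize}
\item $\|\chi_\ell\partial_y^2u_t\|\le C\|\nabla u_t\|_{H^1}$, using \eqref{eq:ext-local-sobolev} for the zero-order part.
\item $F$ and $q\Div u$ contribute $CN(t)D(t)^{1/2}$ by \eqref{eq:products}.
\item The commutators are bounded by $C(\|\nabla u\|_{H^2}+\|q\|_{H^2})$, together with $\|\nabla\phi\|_{H^2(K)}\le C\|q\|_{H^1}$ from Lemma~\ref{lem:ext-poisson}.
\item The term $\nabla\Div u$ in $g$ and $\nabla h$ are rewritten through $\Div u=-w-q\Div u$, reducing to $\nabla(\chi_\ell\partial_y^2 w)$.
\end{itemize}
Split $\nabla=\nu\partial_s+t^\alpha\partial_{y_\alpha}$. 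The tangential part $\chi_\ell\partial_y^3w$ equals $-\chi_\ell\partial_y^3\Div u$ up to nonlinear terms, so it is bounded by $\sum_L\|\nabla Lu\|$ plus commutators of lower order. The normal part is exactly $\chi_\ell\partial_y^2\partial_s w$, and derivatives falling on $\chi_\ell$ or on the chart coefficients are lower order. Finally, $\chi_\ell\nabla^2\partial_y^2u$ differs from $\nabla^2v$ by terms with at most three derivatives of $u$, which gives \eqref{eq:ext-mixed-two} after squaring.

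For $|\beta|=1$, use the local estimate of order $m=1$:
\[
\|\nabla^3v\|\le C\bigl(\|\nabla g\|+\|\nabla^2h\|+\cdots\bigr).
\]
Now $\|\nabla(\chi_\ell\partial_yu_t)\|\le C\|\nabla u_t\|_{H^1}$, and $\nabla^2(\chi_\ell\partial_y w)$ is expanded in chart coordinates into $\partial_y^3w$, $\partial_y^2\partial_sw$ and $\partial_y\partial_s^2w$, plus lower order. The first of these is again controlled by tangential velocity derivatives, and the other two are the terms displayed in \eqref{eq:ext-mixed-one}. The commutator in $\nabla g$ now involves third derivatives of $q$ and $\phi$, which is the main concern. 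They occur only as $[\nabla,\chi_\ell\partial_y]\nabla(\gamma q-\phi)$, whose coefficients vanish away from derivatives of $\chi_\ell$ and the chart frame. If needed, I will keep these terms inside the Stokes pressure rather than in $g$: I write the pressure as $\pi=\partial_y^{\beta}$ applied in chart form, so that the commutator with $\nabla$ has order $|\beta|$ in $(\gamma q-\phi)$. This gives at most $\|\nabla(\gamma q-\phi)\|_{H^1(K)}$ inside $\|\nabla g\|$, hence only $\|q\|_{H^2}$.

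The hardest step is to check carefully that every commutator generated by the curved coordinates, in both $g$ and $h$, involves at most $|\beta|+2$ derivatives of $u$ (at most $\|\nabla u\|_{H^2}$) and at most two derivatives of $q$. The first item I would check is the pressure commutator described above. Nonlinear products are handled with \eqref{eq:products}, giving the $CN(t)^2D(t)$ term.
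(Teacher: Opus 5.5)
Your proposal is correct and follows essentially the paper's argument. You use local $H^2$ and $H^3$ Dirichlet--Stokes estimates for two and one tangential derivatives of \eqref{eq:stokes-u}, substitute $\Div u=-w-q\Div u$ in both the momentum and divergence data, split $\nabla\partial_y^2w$ and $\nabla^2\partial_yw$ into tangential and normal chart components, and control $\partial_y^3w$ by $\sum_L\|\nabla Lu\|$. There are two small points. Putting $\chi_\ell$ on the unknown keeps all data localized by the same $\chi_\ell$, so you can skip the paper's final covering step with a larger cutoff. Your worry about third derivatives of $\gamma q-\phi$ in $\nabla g$ does not arise, because $[\nabla,\chi_\ell\partial_y]$ is first order.
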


\begin{proof}
Apply the local $H^2$ Stokes estimate
to $\partial_y^2u$ and the local $H^3$ estimate to $\partial_yu$,
using a cutoff equal to one on the support of the smaller cutoff.
The commutators $[\Delta,\partial_y^2]u$ and
$[\partial_y^2,\nabla](\gamma q-\phi)$ contain at most three
derivatives of $u$ and two derivatives of $\gamma q-\phi$.
The commutators for $\partial_yu$, differentiated once, have the
same bounds. All these terms are included in the lower norms
displayed above.

The highest divergence data are
$\nabla\partial_y^2w$ and $\nabla^2\partial_yw$.
Their components are, respectively,
\[
(\partial_y^3w,\partial_y^2\partial_sw),\qquad
(\partial_y^3w,\partial_y^2\partial_sw,
\partial_y\partial_s^2w),
\]
up to lower-order terms. The pure tangential derivative
$\partial_y^3w$ is controlled by the corresponding fourth
derivatives of $u$ and \eqref{eq:products}, as in the proof of
\eqref{eq:mixed-u-two}. Finally, sum over a fixed finite covering.
On overlapping normal charts the distance coordinate is the same,
and a change of tangential coordinates preserves the number of
normal derivatives. A partition of unity therefore controls the
larger-cutoff terms by the sums displayed above. Derivatives of
the transition functions or cutoffs contribute only lower-order
terms.

\end{proof}

Let $\nu$ be the unit normal pointing into the fluid, extended
along normal coordinate lines. Thus
$\partial_s=\nu\cdot\nabla$ and $\partial_s\nu=0$ in the collar.
Taking the normal component of the momentum equation gives
\begin{equation}\label{eq:ext-normal-momentum}
\begin{aligned}
&(2\mu+\lambda)\partial_sw+\gamma\partial_sq
={}\partial_s\phi-u_t\cdot\nu
+\mu\nu\cdot(\Delta u-\nabla\Div u)
\\
&\quad -(2\mu+\lambda)\partial_s(q\Div u)+F\cdot\nu.
\end{aligned}
\end{equation}

By \eqref{eq:coordinate-differential},
\[
\nu\cdot\nabla\Div u=\partial_s\Div u
=\nu\cdot\partial_s^2u+t^\alpha\cdot\partial_{y_\alpha}\partial_su
+(\partial_st^\alpha)\cdot\partial_{y_\alpha}u.
\]
Subtracting this identity from the normal component of the Laplacian
gives
\begin{equation}\label{eq:ext-normal-cancellation}
\begin{aligned}
&\nu\cdot(\Delta u-\nabla\Div u)
={}g^{\alpha\beta}\nu\cdot\partial_{y_\alpha y_\beta}u
-t^\alpha\cdot\partial_{y_\alpha}\partial_su
\\
&\quad +(\Div\nu)\nu\cdot\partial_su
+\bigl((\Div t^\alpha)\nu-\partial_st^\alpha\bigr)
\cdot\partial_{y_\alpha}u.
\end{aligned}
\end{equation}
The pure second normal derivative of $u$ cancels, and the last line
contains only first derivatives of $u$.
After applying $\partial_y^2$, $\partial_y\partial_s$ and
$\partial_s^2$, the highest velocity derivatives have the types
\[
(\partial_y^4u,\partial_y^3\partial_su),\qquad
(\partial_y^3\partial_su,\partial_y^2\partial_s^2u),\qquad
(\partial_y^2\partial_s^2u,\partial_y\partial_s^3u),
\]
respectively. They are controlled by the tangential dissipation,
\eqref{eq:ext-mixed-two}, and \eqref{eq:ext-mixed-one}, in that
order.

\begin{lemma}\label{lem:ext-normal}
There are constants $0<\eta<1$ and $M_4>0$ such that the energy
\begin{equation}\label{eq:ext-H}
\begin{aligned}
&\mathcal H(t)={}\frac{2\mu+\lambda+\gamma}{2}
\sum_{j=0}^2\eta^j\sum_\ell
\|\chi_\ell\partial_y^{2-j}\partial_s^{j+1}q\|^2
\\
&\quad +\frac{M_4}{2}\sum_L
\bigl(\gamma\|Lq\|^2+\|Lu\|^2+\|\nabla L\phi\|^2\bigr)
\end{aligned}
\end{equation}
satisfies
\begin{equation}\label{eq:ext-spatial-energy}
\begin{aligned}
&\frac{\dd}{\dd t}\mathcal H
+c\bigl(\|q\|_{H^3}^2+\|\nabla u\|_{H^3}^2\bigr)
\leq C\bigl(
\|\nabla u_t\|_{H^1}^2+\|q\|^2+\|\nabla u\|^2\bigr)
+CN(t)D(t).
\end{aligned}
\end{equation}
\end{lemma}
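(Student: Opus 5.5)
The plan is to follow the proof of Lemma~\ref{lem:normal} almost line by line, now in the boundary normal coordinates of Lemma~\ref{lem:normal-coordinates}. For $j=0,1,2$ and each chart $\ell$, apply $\chi_\ell\partial_y^{2-j}\partial_s^j$ to the normal momentum equation \eqref{eq:ext-normal-momentum}. Then take the inner product with $\chi_\ell\partial_y^{2-j}\partial_s^{j+1}(q+w)$ and sum over tangential multi-indices and charts. As in the half-space, the left-hand side yields $(2\mu+\lambda)\|\chi_\ell\partial_y^{2-j}\partial_s^{j+1}w\|^2+\gamma\|\chi_\ell\partial_y^{2-j}\partial_s^{j+1}q\|^2$ together with the cross term. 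To obtain the time derivative in the cross term, write $w=q_t+u\cdot\nabla q$ and commute $\chi_\ell\partial_y^{2-j}\partial_s^{j+1}$ with $u\cdot\nabla$. The transport term integrates to $-\frac12(\Div u,\cdot)$ with no boundary flux since $u=0$. The commutator contains chart coefficients and cutoff derivatives, but each such term is a product with at most three derivatives on $q$ and a factor $u$ or $\nabla u$. It is therefore bounded by $CN(t)D(t)^{1/2}$ using \eqref{eq:sobolev-commutator}; zero-order local factors of $u$ are handled with \eqref{eq:ext-local-sobolev}.

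Next I will bound the right-hand side. The electric term uses $\|\nabla^3\phi\|\leq C\|q\|_{H^1}$ from Lemma~\ref{lem:ext-poisson}. The term $u_t\cdot\nu$ is controlled by $\|\nabla u_t\|_{H^1}$, with the local $L^2$ part bounded through \eqref{eq:ext-local-sobolev}. The nonlinear terms are bounded via \eqref{eq:products}, since $\|w\|_{H^3}\leq CD^{1/2}$. The viscous term is the point where the cancellation \eqref{eq:ext-normal-cancellation} is used: after the derivative, the highest velocity derivatives are of the three types listed after that identity. The $j=0$ term is bounded by $\sum_L\|\nabla Lu\|^2$. The $j=1$ term is bounded by \eqref{eq:ext-mixed-two}, which brings in $\|\chi_\ell\partial_y^2\partial_sw\|^2$, that is, the $j=0$ quantity. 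The $j=2$ term is bounded by \eqref{eq:ext-mixed-one}, which brings in the $j=0,1$ quantities. The geometric lower-order terms arise from derivatives of $g^{\alpha\beta}$, $t^\alpha$ and $\chi_\ell$. They involve at most three derivatives of $u$ and are bounded by $\|\nabla u\|_{H^2}$, which interpolates as $\epsilon\|\nabla u\|_{H^3}^2+C_\epsilon\|\nabla u\|^2$. The result is the exterior analogue of \eqref{eq:normal-triangular}. Its right-hand side contains $\sum_L\|\nabla Lu\|^2$, lower-$j$ dissipation, $\|\nabla u_t\|_{H^1}^2+\|q\|_{H^2}^2+\|\nabla u\|_{H^2}^2$, and $CN(t)D(t)$.

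Then I weight by $\eta^j$ and sum, choosing $\eta$ small so that the triangular terms are absorbed. I add $M_4$ times \eqref{eq:ext-tangential-energy} to absorb $C\sum_L\|\nabla Lu\|^2$. The $\epsilon$-terms in \eqref{eq:ext-tangential-energy} are harmless provided $\epsilon$ is chosen after $M_4$. Finally I recover the full norms. First I bound $\|\nabla^3w\|$ by the localized normal pieces: $\chi_0$-terms through the operators $L=\chi_0\partial^\alpha$, and the chart pieces through the normal dissipation and $\partial_y^3w$, using \eqref{eq:coordinate-cutoffs}. Then \eqref{eq:ext-stokes-u} gives $\|\nabla^4u\|+\|\nabla^3(\gamma q-\phi)\|$, and with $\|\nabla^3\phi\|\leq C\|q\|_{H^1}$ this controls $\|\nabla^3q\|$. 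The lower norms $\|q\|_{H^2}$ and $\|\nabla u\|_{H^2}$ are interpolated between $\|q\|$, $\|\nabla u\|$ and the top norms, and absorbed.

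The main obstacle will be the bookkeeping of the geometric commutators so that nothing at top order escapes the triangular structure. This is delicate in the $j=2$ step: terms of the form $\partial_s^2$ applied to $t^\alpha\cdot\partial_{y_\alpha}\partial_su$ must land in $\chi_\ell\nabla^3\partial_yu$ rather than in a pure $\partial_s^4u$ term. I would check this first using $\partial_s\nu=0$ and the fact that the coefficients depend smoothly on $(y,s)$. A further point is that the cutoff $\chi_\ell$ does not commute with $\partial_s$ away from $s=0$. The resulting terms are lower order, but they are the reason for the extra $\|q\|_{H^2}^2$ and $\|\nabla u\|_{H^2}^2$ terms, which are interpolated away at the end.
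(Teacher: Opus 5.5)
Your proposal is correct and follows the paper's proof essentially step for step. The shared steps are: testing $\partial_y^{2-j}\partial_s^{j}$ of \eqref{eq:ext-normal-momentum} against $\chi_\ell^2\partial_y^{2-j}\partial_s^{j+1}(q+w)$; using the cancellation \eqref{eq:ext-normal-cancellation} together with Lemma~\ref{lem:ext-mixed} in the order $j=0,1,2$; the $\eta^j$-weighting; adding $M_4$ times \eqref{eq:ext-tangential-energy} with its small parameter fixed afterwards; and recovering the full norms through $\|\nabla^3 w\|$ and \eqref{eq:ext-stokes-u}.

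One detail should be made explicit in the electric term. The coordinate derivative $\partial_y^{2-j}\partial_s^{j+1}\phi$ also contains first and second physical derivatives of $\phi$. These must be bounded by $\|\nabla\phi\|_{L^2(K)}+\|\nabla^2\phi\|\leq C\|q\|$, using \eqref{eq:ext-local-field} as in \eqref{eq:ext-normal-electric}, because the global norm $\|\nabla\phi\|$ is not part of the dissipation.
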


\begin{proof}
For $j=0,1,2$, apply $\partial_y^{2-j}\partial_s^j$ to
\eqref{eq:ext-normal-momentum}, and test with
$\chi_\ell^2\partial_y^{2-j}\partial_s^{j+1}(q+w)$.
The identity $w=q_t+u\cdot\nabla q$ gives
\[
\begin{aligned}
&\partial_y^{2-j}\partial_s^{j+1}w
=\partial_t\partial_y^{2-j}\partial_s^{j+1}q
+u\cdot\nabla\partial_y^{2-j}\partial_s^{j+1}q
\\
&\qquad+[\partial_y^{2-j}\partial_s^{j+1},u\cdot\nabla]q.
\end{aligned}
\]
In local coordinates the transport coefficients are $u\cdot\nu$
and $u\cdot t^\alpha$. The product rule, the bounded chart
coefficients and \eqref{eq:sobolev-commutator} give
\[
\|\chi_\ell[\partial_y^{2-j}\partial_s^{j+1},u\cdot\nabla]q\|
\leq CN(t)D(t)^{1/2}.
\]
Integration with respect to the physical volume element yields
\begin{align*}
&(\chi_\ell\partial_y^{2-j}\partial_s^{j+1}w,
\chi_\ell\partial_y^{2-j}\partial_s^{j+1}q)
\\
&\quad=\frac12\frac{\dd}{\dd t}
\|\chi_\ell\partial_y^{2-j}\partial_s^{j+1}q\|^2
-\frac12\int_\Omega\Div(\chi_\ell^2u)
|\partial_y^{2-j}\partial_s^{j+1}q|^2\dd x
\\
&\qquad+(\chi_\ell[\partial_y^{2-j}\partial_s^{j+1},u\cdot\nabla]q,
\chi_\ell\partial_y^{2-j}\partial_s^{j+1}q).
\end{align*}
The last two terms are bounded by $CN(t)D(t)$.
The quadratic terms in the tested normal momentum equation are
\[
\begin{aligned}
&(2\mu+\lambda)\|\chi_\ell\partial_y^{2-j}\partial_s^{j+1}w\|^2
+\gamma\|\chi_\ell\partial_y^{2-j}\partial_s^{j+1}q\|^2
\\
&\qquad+(2\mu+\lambda+\gamma)
(\chi_\ell\partial_y^{2-j}\partial_s^{j+1}w,
\chi_\ell\partial_y^{2-j}\partial_s^{j+1}q).
\end{aligned}
\]

For the electric term, \eqref{eq:coordinate-differential} expresses
$\partial_y^{2-j}\partial_s^{j+1}\phi$ as a sum of physical
derivatives of $\phi$ of orders one through three with fixed smooth
coefficients. Lemma~\ref{lem:ext-poisson} gives
\begin{equation}\label{eq:ext-normal-electric}
\begin{aligned}
&\|\chi_\ell\partial_y^{2-j}\partial_s^{j+1}\phi\|
\leq C\bigl(\|\nabla\phi\|_{L^2(K)}
+\|\nabla^2\phi\|+\|\nabla^3\phi\|\bigr)
\leq C\|q\|_{H^1},
\\
&|(\chi_\ell\partial_y^{2-j}\partial_s^{j+1}\phi,
\chi_\ell\partial_y^{2-j}\partial_s^{j+1}(q+w))|
\\
&\qquad\leq\epsilon
\|\chi_\ell\partial_y^{2-j}\partial_s^{j+1}(q,w)\|^2
+C_\epsilon\|q\|_{H^1}^2.
\end{aligned}
\end{equation}
The time derivative term obeys
\[
\begin{aligned}
&\|\chi_\ell\partial_y^{2-j}\partial_s^j(u_t\cdot\nu)\|
\leq C\bigl(\|\nabla^2u_t\|+\|\nabla u_t\|
+\|u_t\|_{L^2(K)}\bigr)
\leq C\|\nabla u_t\|_{H^1}.
\end{aligned}
\]
Young's inequality therefore treats its pairing in the same way.
The terms involving $F$ and $q\Div u$ are bounded by $CN(t)D(t)$
using \eqref{eq:products} and $\|w\|_{H^3}\leq CD(t)^{1/2}$.
In \eqref{eq:ext-normal-cancellation}, derivatives of coefficients
leave at most three velocity derivatives. The fourth derivatives
are estimated by Lemma~\ref{lem:ext-mixed}, in the order
$j=0,1,2$ described above. Choosing the Young parameters sufficiently
small gives
\begin{equation}\label{eq:ext-normal-triangular}
\begin{aligned}
&\frac{2\mu+\lambda+\gamma}{2}\frac{\dd}{\dd t}
\sum_\ell\|\chi_\ell\partial_y^{2-j}\partial_s^{j+1}q\|^2
+c\sum_\ell
\|\chi_\ell\partial_y^{2-j}\partial_s^{j+1}(q,w)\|^2
\\
&\quad\leq C\sum_L\|\nabla Lu\|^2
+C\sum_{k<j}\sum_\ell
\|\chi_\ell\partial_y^{2-k}\partial_s^{k+1}(q,w)\|^2
\\
&\qquad\quad
+C\bigl(\|\nabla u_t\|_{H^1}^2+\|q\|_{H^2}^2
+\|\nabla u\|_{H^2}^2\bigr)+CN(t)D(t).
\end{aligned}
\end{equation}
The normal derivative count is thus exactly the one in
\eqref{eq:normal-triangular}.

Choose $\eta$ as in Lemma~\ref{lem:normal}, then choose $M_4$ large
enough to absorb the tangential terms in
\eqref{eq:ext-normal-triangular}. Only after these choices do we
choose the small parameter in \eqref{eq:ext-tangential-energy}.
Interpolation of the lower spatial norms yields
\begin{equation}\label{eq:ext-H-preliminary}
\begin{aligned}
&\frac{\dd}{\dd t}\mathcal H
+c\sum_L\|\nabla Lu\|^2
+c\sum_{j=0}^2\sum_\ell
\|\chi_\ell\partial_y^{2-j}\partial_s^{j+1}(q,w)\|^2
\\
&\quad\leq
\epsilon\bigl(\|q\|_{H^3}^2+\|\nabla u\|_{H^3}^2\bigr)
+C_\epsilon(\|q\|^2+\|\nabla u\|^2)
+C\|\nabla u_t\|_{H^1}^2+CN(t)D(t).
\end{aligned}
\end{equation}

To recover the full spatial norms, first use the covering and
$w=-\Div u-q\Div u$ to obtain
\[
\begin{aligned}
&\|\nabla^3w\|^2\leq C\biggl(
\sum_L\|\nabla Lu\|^2
+\sum_{j=0}^2\sum_\ell
\|\chi_\ell\partial_y^{2-j}\partial_s^{j+1}w\|^2
+\|\nabla u\|_{H^2}^2\biggr)+CN(t)^2D(t).
\end{aligned}
\]
Use this in \eqref{eq:ext-stokes-u}, and write
$\gamma\nabla^3q=\nabla^3(\gamma q-\phi)+\nabla^3\phi$.
Lemma~\ref{lem:ext-poisson} and interpolation then give
\begin{equation}\label{eq:ext-full-recovery}
\begin{aligned}
&\|q\|_{H^3}^2+\|\nabla u\|_{H^3}^2
\leq C\biggl(
\sum_L\|\nabla Lu\|^2
+\sum_{j=0}^2\sum_\ell
\|\chi_\ell\partial_y^{2-j}\partial_s^{j+1}(q,w)\|^2
\\
&\quad +\|\nabla u_t\|_{H^1}^2+\|q\|^2+\|\nabla u\|^2
\biggr)+CN(t)^2D(t).
\end{aligned}
\end{equation}
For small $\delta$, the spatial part of the last term is absorbed;
its remaining part is bounded by a fixed multiple of
$\|\nabla u_t\|_{H^1}^2$. Combining
\eqref{eq:ext-full-recovery} with \eqref{eq:ext-H-preliminary}
and then choosing $\epsilon$ sufficiently small proves
\eqref{eq:ext-spatial-energy}.
\end{proof}

\begin{proposition}\label{prop:ext-apriori}
There exist $\delta>0$ and $C>0$, depending on the fixed exterior
domain and the physical parameters but not on $T$, such that every
solution satisfying \eqref{eq:regularity},
\eqref{eq:ext-boundary} and \eqref{eq:apriori-assumption} obeys
\eqref{eq:apriori-conclusion}.
\end{proposition}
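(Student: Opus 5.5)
The proof will follow Proposition~\ref{prop:apriori}, with the half-space ingredients replaced by their exterior counterparts. The differential inequalities needed are \eqref{eq:ext-basic}, \eqref{eq:ext-time}, \eqref{eq:ext-density}, \eqref{eq:ext-gradient-time} from Lemma~\ref{lem:ext-low}, and \eqref{eq:ext-spatial-energy} from Lemma~\ref{lem:ext-normal}. We form
\[
\mathcal E=\mathcal H+M_2\mathcal E_2+M_3(u,\nabla q)+M_1\mathcal E_1+M_0\mathcal E_0,
\]
where $\mathcal H$ is now given by \eqref{eq:ext-H}. The weights are fixed in a definite order.
\begin{itemize}
\item Take $M_2$ large so that \eqref{eq:ext-gradient-time} absorbs the $\|\nabla^2u_t\|^2$ part of $C\|\nabla u_t\|_{H^1}^2$ in \eqref{eq:ext-spatial-energy}.
\item Take $M_3$ large so that \eqref{eq:ext-density} absorbs $C\|q\|^2$.
\item Take $M_1$ large so that \eqref{eq:ext-time} absorbs every $\|\nabla u_t\|^2$ term, including the one produced by $M_2$ times \eqref{eq:ext-gradient-time}.
\item Take $M_0$ large so that $a(u,u)\geq c\|\nabla u\|^2$ in \eqref{eq:ext-basic} absorbs the remaining $C\|\nabla u\|^2$.
\end{itemize}
The linear errors $C\|\nabla u\|_{H^1}^2$ coming from the density and $\mathcal E_2$ inequalities are handled by $\|\nabla u\|_{H^1}^2\leq\epsilon\|\nabla u\|_{H^3}^2+C_\epsilon\|\nabla u\|^2$; this interpolation holds on the exterior domain by extension. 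The outcome is $\frac{\dd}{\dd t}\mathcal E+cD(t)\leq CN(t)D(t)$, which is \eqref{eq:closed-energy}.

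Next comes the equivalence $cN^2\leq\mathcal E\leq CN^2$. The cross-term bound \eqref{eq:cross-control} carries over directly: \eqref{eq:first-velocity-control} uses only the weak form and the Sobolev inequality \eqref{eq:sobolev}. For \eqref{eq:Au-control} we apply \eqref{eq:ext-lame} instead of \eqref{eq:lame-estimate}, which adds a harmless $\|\nabla u\|$ term controlled through \eqref{eq:first-velocity-control}.

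The positivity of $\mathcal H$ is where the argument must depart from the half-space. Here I must check that $\sum_\ell\|\chi_\ell\partial_y^{2-j}\partial_s^{j+1}q\|^2+\sum_L\|Lq\|^2$, together with $\|q\|^2$ from $\mathcal E_0$, controls $\|q\|_{H^3}^2$. By \eqref{eq:coordinate-cutoffs}, every third derivative near the boundary is a combination of coordinate derivatives with bounded coefficients. Those with at least one normal derivative appear in the first sum. Pure tangential ones appear as $\chi_\ell\partial_y^\beta q$, and $\chi_0\partial^\alpha q$ covers the interior. Commutators of cutoffs with derivatives are lower order and are removed by interpolation against $\|q\|^2$ and a small multiple of $\|\nabla^3q\|^2$. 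The $H^3$ norm of $u$ is recovered from the momentum equation via \eqref{eq:ext-lame-energy}:
\[
\|u\|_{H^3}\leq C\bigl(\|u\|+\|\nabla u_t\|+\|\nabla u\|+\|q\|_{H^2}+\|\nabla\phi\|\bigr)+CN\|u\|_{H^3},
\]
using $\|\nabla^2\phi\|\leq C\|q\|$ from Lemma~\ref{lem:ext-poisson}. Then \eqref{eq:field-time-bound} and \eqref{eq:qt-estimates} give the upper bound for the time-derivative parts.

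Finally, with $\delta$ chosen so that $C\delta\leq c/2$, integrating in time gives \eqref{eq:integrated-energy}, and the equivalence yields \eqref{eq:apriori-conclusion}. I expect the main obstacle to be ordering the constants consistently. The parameter $\epsilon$ of Lemma~\ref{lem:ext-tangential} is already absorbed inside \eqref{eq:ext-spatial-energy}. The $\epsilon$ in the positivity estimate must be chosen after $M_2$ and $M_3$, but before $M_1$ and $M_0$, because the $\|u_t\|^2$ and lower-order terms in \eqref{eq:cross-control} carry constants $C_\epsilon$ that $M_1$ and $M_0$ then have to dominate. I would spell this ordering out explicitly.
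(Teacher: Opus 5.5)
Your proposal is correct and follows the paper's proof essentially step for step. It uses the same combination \eqref{eq:combined-energy} with $\mathcal H$ from \eqref{eq:ext-H}, the same order of choices ($M_2,M_3$, then the small parameter in \eqref{eq:cross-control}, then $M_1$ enlarged to absorb the extra $\|\nabla u_t\|^2$ terms, then $M_0$), the covering bound $\|q\|_{H^3}^2\leq C(\mathcal H+\|q\|^2)$, and \eqref{eq:ext-lame-energy} for the velocity.

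There are two harmless slips. First, \eqref{eq:Au-control} is read directly off the momentum equation and needs no Lam\'e estimate. Second, \eqref{eq:ext-lame-energy} involves $\|Au\|_{H^1}$ and no Poincar\'e inequality is available on an exterior domain, so your $H^3$ velocity bound should contain $\|u_t\|_{H^1}$ rather than $\|\nabla u_t\|$, as in the paper; this costs nothing because $\|u_t\|^2$ is part of $\mathcal E_1$.
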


\begin{proof}
Use exactly the combination \eqref{eq:combined-energy}, with
$\mathcal H$ now defined by \eqref{eq:ext-H}.
Choose $M_2,M_3$ to absorb the
$\|\nabla^2u_t\|^2$ and $\|q\|^2$ terms in
\eqref{eq:ext-spatial-energy}. Interpolate the resulting
$\|\nabla u\|_{H^1}^2$ terms as in
Proposition~\ref{prop:apriori}.
In the exterior domain, $M_1$ is also chosen large enough to
absorb the $\|\nabla u_t\|^2$ terms in
\eqref{eq:ext-gradient-time} and \eqref{eq:ext-spatial-energy}.
Finally choose $M_0$ to absorb the remaining
$\|\nabla u\|^2$ terms.

The estimates \eqref{eq:first-velocity-control},
\eqref{eq:Au-control} and \eqref{eq:cross-control} are unchanged.
The covering in \eqref{eq:ext-H} gives
\[
\|q\|_{H^3}^2\leq C(\mathcal H+\|q\|^2).
\]
Thus the same order of choices proves positivity of
$\mathcal E$: after fixing $M_2,M_3$, choose the small parameter
in \eqref{eq:cross-control}, then increase $M_1,M_0$ if needed.
For the full velocity norm, replace the half-space estimate
\eqref{eq:velocity-H3} by \eqref{eq:ext-lame-energy} applied to
the momentum equation:
\[
\|u\|_{H^3}
\leq C\bigl(\|u\|+\|u_t\|_{H^1}
+\|q\|_{H^2}+\|\nabla\phi\|\bigr)
+CN(t)\|u\|_{H^3}.
\]
Absorption proves the same energy equivalence
\eqref{eq:energy-equivalence}.
Adding \eqref{eq:ext-basic}--\eqref{eq:ext-gradient-time}
and \eqref{eq:ext-spatial-energy} with the fixed weights proves
\eqref{eq:closed-energy}. Choose $\delta$ only after all these
constants have been fixed. The integration and energy equivalence
are exactly as in \eqref{eq:integrated-energy}, proving the result.
\end{proof}

\section{Proofs of the main results}\label{sec:global}

\subsection{Global existence in the two domains}

\begin{proof}[Proof of Theorems~\ref{thm:global} and~\ref{thm:ext-global}]
Lemma~\ref{lem:local} gives a strong solution on a nontrivial
time interval. At $t=0$,
the momentum equation and \eqref{eq:compatibility} give
$u_t(0)\in H^1_0$. Lemmas~\ref{lem:poisson} and~\ref{lem:ext-poisson} and the initial
regularity imply
\begin{equation}\label{eq:initial-N}
N(0)\leq C\varepsilon_0.
\end{equation}
Choose $\varepsilon_*$ in \eqref{eq:smallness} sufficiently
small. By continuity, the local solution satisfies
\eqref{eq:apriori-assumption} on an initial interval.
Proposition~\ref{prop:apriori} in the half-space and
Proposition~\ref{prop:ext-apriori} in an exterior domain give
\begin{equation}\label{eq:uniform-N}
N(t)^2+\int_0^tD(s)\dd s\leq C\varepsilon_0^2.
\end{equation}
Decreasing $\varepsilon_*$ if necessary, the right-hand side
ensures $N(t)\leq\delta/2$ and, by Sobolev embedding,
$1/2\leq1+q(t,x)\leq3/2$. This improves the a priori assumption.
The continuation criterion therefore excludes a finite maximal
existence time.

By Lemmas~\ref{lem:poisson} and~\ref{lem:ext-poisson},
\[
\|\nabla\phi(t)\|_{H^3}
\leq C\bigl(\|\nabla\phi(t)\|+\|q(t)\|_{H^2}\bigr).
\]
The continuity equation gives, by \eqref{eq:qt-estimates},
\[
\sup_{t\geq0}\|q_t(t)\|_{H^2}^2
+\int_0^\infty\|q_t(t)\|_{H^2}^2\dd t
\leq C\varepsilon_0^2.
\]
Together with \eqref{eq:uniform-N}, these estimates prove
\eqref{eq:global-estimate} and \eqref{eq:ext-global-estimate}. On each finite interval,
the time derivative of the momentum equation reads
\[
u_{tt}=-Au_t-\gamma\nabla q_t+\nabla\phi_t+F_t.
\]
The first, second and fourth terms belong to $L^2(0,T;L^2)$
by \eqref{eq:uniform-N} and \eqref{eq:Ft-bound}; the third
belongs to $L^\infty(0,T;L^2)$ by \eqref{eq:field-time-bound}.
Hence $u_{tt}\in L^2(0,T;L^2)$. The remaining continuity
properties in \eqref{eq:regularity} follow from the local theory
on every finite interval. Local uniqueness, applied successively
on these intervals, proves uniqueness of the global solution.
\end{proof}

The additional low-frequency condition allows us to use the dissipation
in Section~\ref{sec:estimates} to obtain a decay rate. We use the momentum
$(1+q)u$ in the negative-order estimate so that the continuity equation
has no nonlinear remainder.

\subsection{Decay in the half-space}

\begin{proof}[Proof of Theorem~\ref{thm:decay}]
Let $N(t)$, $D(t)$ and $\mathcal E$ be given by \eqref{eq:ND} and
\eqref{eq:combined-energy}. The preceding proof and
\eqref{eq:closed-energy} imply
\begin{equation}\label{eq:decay-energy}
cN(t)^2\leq\mathcal E\leq CN(t)^2,\qquad
\frac{\dd}{\dd t}\mathcal E+cD(t)\leq0,\qquad
\sup_{t\geq0}N(t)^2+\int_0^\infty D(t)\dd t
\leq C\varepsilon_0^2.
\end{equation}

Set $m=(1+q)u$. Since
\[
q\nabla\phi
=\Div\left(\nabla\phi\otimes\nabla\phi
-\frac12|\nabla\phi|^2I\right),
\]
the equations can be written in the conservative form
\begin{equation}\label{eq:conservative-decay}
\left\{
\begin{aligned}
&q_t+\Div m=0,
\\
&m_t+Am+\gamma\nabla q-\nabla\phi=\Div S,
\\
&\nabla\phi_t=-\QQ m,
\end{aligned}
\right.
\end{equation}
where
\begin{equation}\label{eq:decay-stress}
\begin{aligned}
&S={}-m\otimes u-\bigl(p(1+q)-p(1)-\gamma q\bigr)I
\\
&\quad +\nabla\phi\otimes\nabla\phi-\frac12|\nabla\phi|^2I
\\
&\quad -\mu\nabla(qu)-(\mu+\lambda)\Div(qu)I.
\end{aligned}
\end{equation}
Here $I$ denotes the identity matrix,
$(\nabla v)_{ij}=\partial_jv_i$, and
$(\Div S)_i=\sum_j\partial_jS_{ij}$.
The last line of \eqref{eq:decay-stress} accounts for
$Am-Au=A(qu)$. The boundary condition is $m=0$ on $\partial\Omega$.

Lemma~\ref{lem:poisson} and the smallness of $N(t)$ give
\[
\|m\|+\|u\|+\|\nabla\phi\|\leq CN(t),\qquad
\|\nabla m\|+\|\nabla u\|+\|\nabla^2\phi\|
\leq CD(t)^{1/2}.
\]
For the momentum, this follows from
$\nabla m=(1+q)\nabla u+u\otimes\nabla q$.
By Lemma~\ref{lem:tangential-negative},
\begin{equation}\label{eq:negative-stress}
\|\Lambda_y^{-s}S\|
\leq C_sN(t)^{s+\frac12}D(t)^{(\frac32-s)/2},
\qquad \frac12\leq s<1.
\end{equation}
Indeed, the convective and electric terms satisfy
\[
\begin{aligned}
&\|\Lambda_y^{-s}(m\otimes u)\|
\leq C_s\|u\|^{s-\frac12}
\|\nabla u\|^{\frac32-s}\|m\|,
\\
&\|\Lambda_y^{-s}(\nabla\phi\otimes\nabla\phi)\|
\leq C_s\|\nabla\phi\|^{s+\frac12}
\|\nabla^2\phi\|^{\frac32-s}.
\end{aligned}
\]
For the pressure remainder, use the factorization
\[
p(1+q)-p(1)-\gamma q
=q\int_0^1\bigl(p'(1+\theta q)-p'(1)\bigr)\dd\theta.
\]
The $L^2$ norm of the second factor is bounded by $C\|q\|$.
Applying \eqref{eq:negative-product} to these two factors gives the
bound $C_s\|q\|^{s+\frac12}\|\nabla q\|^{\frac32-s}$.
For the viscous corrections, write
$\nabla(qu)=q\nabla u+u\otimes\nabla q$ and use
\[
\begin{aligned}
&\|\Lambda_y^{-s}(q\nabla u)\|
\leq C_s\|q\|^{s-\frac12}
\|\nabla q\|^{\frac32-s}\|\nabla u\|,
\\
&\|\Lambda_y^{-s}(u\otimes\nabla q)\|
\leq C_s\|u\|^{s-\frac12}
\|\nabla u\|^{\frac32-s}\|\nabla q\|.
\end{aligned}
\]
In these bounds the last factor is controlled by $CN(t)$, while the
factor with exponent $\frac32-s$ is controlled by
$CD(t)^{(\frac32-s)/2}$. This proves \eqref{eq:negative-stress}.

Define the negative-order energy
\[
\mathcal L_s(t)=\frac12\left(
\gamma\|\Lambda_y^{-s}q(t)\|^2
+\|\Lambda_y^{-s}m(t)\|^2
+\|\Lambda_y^{-s}\nabla\phi(t)\|^2\right).
\]
The operators $\Lambda_y^{-s}$ and $\QQ$ commute, since the
characterization \eqref{eq:projection-definition} is invariant under
tangential translations. Apply $\Lambda_y^{-s}$ to
\eqref{eq:conservative-decay}, and take the inner products with
$\gamma\Lambda_y^{-s}q$, $\Lambda_y^{-s}m$ and
$\Lambda_y^{-s}\nabla\phi$, respectively.
The pressure terms cancel after integration by parts, and the electric
contribution to the fluid energy is
$(\Lambda_y^{-s}\nabla\phi,\Lambda_y^{-s}m)$ on the right-hand side.
On the other hand, since $\QQ$ is an orthogonal projection and
$\QQ\nabla\phi=\nabla\phi$,
\[
\begin{aligned}
&\frac12\frac{\dd}{\dd t}\|\Lambda_y^{-s}\nabla\phi\|^2
=-(\QQ\Lambda_y^{-s}m,\Lambda_y^{-s}\nabla\phi)
\\
&\quad =-(\Lambda_y^{-s}m,\Lambda_y^{-s}\nabla\phi).
\end{aligned}
\]
Adding these identities cancels the electric terms. Since
$\Lambda_y^{-s}m=0$ on $\partial\Omega$, we obtain
\[
\frac{\dd}{\dd t}\mathcal L_s
+a(\Lambda_y^{-s}m,\Lambda_y^{-s}m)
=-(\Lambda_y^{-s}S,\nabla\Lambda_y^{-s}m).
\]
Using $(\epsilon^2-\Delta_y)^{-s/2}$ in the same calculation,
we apply Young's inequality and \eqref{eq:negative-stress}, integrate
in time, and let $\epsilon\downarrow0$ by monotone convergence to obtain
\begin{equation}\label{eq:negative-energy-integral}
\begin{aligned}
&\mathcal L_s(t)
+c\int_0^t\|\nabla\Lambda_y^{-s}m(\tau)\|^2\dd\tau
\leq \mathcal L_s(0)
+C_s\int_0^tN(\tau)^{2s+1}D(\tau)^{\frac32-s}\dd\tau.
\end{aligned}
\end{equation}
The right-hand side is finite on each finite interval by
\eqref{eq:decay-energy}. Moreover,
\begin{equation}\label{eq:negative-momentum-transfer}
\|\Lambda_y^{-s}(qu)\|
\leq C_s\|q\|^{s-\frac12}
\|\nabla q\|^{\frac32-s}\|u\|
\leq C_sN(t)^2.
\end{equation}
Thus \eqref{eq:negative-data} implies
$\mathcal L_s(0)\leq C_s(M_{s,0}^2+\varepsilon_0^4)$.
Since $u=m-qu$, we also have
\[
\begin{aligned}
&\|\Lambda_y^{-s}u(t)\|
\leq \|\Lambda_y^{-s}m(t)\|
+\|\Lambda_y^{-s}(qu)(t)\|
\leq \sqrt{2\mathcal L_s(t)}+C_sN(t)^2,
\\
&\|\Lambda_y^{-s}\nabla\phi(t)\|
\leq\sqrt{2\mathcal L_s(t)}.
\end{aligned}
\]

We next relate a uniform bound for $\mathcal L_s$ to the decay of the
full energy. Suppose that $\sup_{t\geq0}\mathcal L_s(t)<\infty$.
The preceding bounds then control both negative norms uniformly.
The regularity in Theorem~\ref{thm:global} also gives
$u,\nabla\phi\in H^1$, so \eqref{eq:tangential-interpolation} applies
to each of them. Together with Lemma~\ref{lem:poisson}, it yields
\begin{equation}\label{eq:decay-low-order}
\begin{aligned}
&\|u\|^2+\|\nabla\phi\|^2
\leq C_s\bigl(\mathcal L_s(t)+\varepsilon_0^4\bigr)^{1/(1+s)}
\bigl(\|\nabla_yu\|^2
+\|\nabla_y\nabla\phi\|^2\bigr)^{s/(1+s)}
\\
&\quad \leq C_sD(t)^{s/(1+s)}.
\end{aligned}
\end{equation}
Here and in the following conditional decay estimate, the constants
may also depend on $\sup_{t\geq0}\mathcal L_s(t)$.
The $L^2$ norm of $u_t$ is estimated from the momentum equation.
In fact, directly from \eqref{eq:F},
\[
\|F\|\leq CN(t)\bigl(\|\nabla u\|_{H^1}+\|\nabla q\|\bigr),
\]
and consequently
\[
\|u_t\|^2
\leq C\bigl(\|\nabla u\|_{H^1}^2
+\|q\|_{H^1}^2+\|\nabla\phi\|^2\bigr).
\]
It follows that
\[
N(t)^2\leq C\bigl(
\|q\|_{H^3}^2+\|\nabla u\|_{H^2}^2+\|\nabla u_t\|^2
+\|u\|^2+\|\nabla\phi\|^2\bigr).
\]
The sum of the first three terms is bounded both by $D(t)$ and by
$CN(t)^2\leq C\varepsilon_0^2$. Hence it is bounded by
$C\varepsilon_0^{2/(1+s)}D(t)^{s/(1+s)}$.
Together with \eqref{eq:decay-low-order}, this proves
$\mathcal E\leq C_sD(t)^{s/(1+s)}$. We can therefore use
\eqref{eq:decay-energy} to obtain
\[
\frac{\dd}{\dd t}\mathcal E+c_s\mathcal E^{1+1/s}\leq0,
\]
and integration gives
\begin{equation}\label{eq:energy-decay-implication}
\mathcal E(t)\leq C_s(1+t)^{-s},\qquad
N(t)\leq C_s(1+t)^{-s/2},
\end{equation}
whenever $\mathcal L_s$ is uniformly bounded.

We now establish the required uniform bound.
For $\frac12<s<1$, Plancherel's theorem and H\"older's inequality give
\[
\begin{aligned}
&M_{1/2,0}
=\|\Lambda_y^{-1/2}(q_0,u_0,\nabla\phi_0)\|
\\
&\quad \leq
\|(q_0,u_0,\nabla\phi_0)\|^{1-\frac1{2s}}
M_{s,0}^{\frac1{2s}}
\leq \varepsilon_0^{1-\frac1{2s}}M_{s,0}^{\frac1{2s}}<\infty.
\end{aligned}
\]
For $s=\frac12$, this finiteness is precisely
\eqref{eq:negative-data}. In either case,
\eqref{eq:negative-momentum-transfer} at $t=0$ implies
$\mathcal L_{1/2}(0)\leq C(M_{1/2,0}^2+\varepsilon_0^4)$.
Applying \eqref{eq:negative-energy-integral} with exponent $\frac12$
and using \eqref{eq:decay-energy}, we obtain
\[
\sup_{t\geq0}\mathcal L_{1/2}(t)
\leq \mathcal L_{1/2}(0)+C\int_0^\infty N(t)^2D(t)\dd t
\leq C(M_{1/2,0}^2+\varepsilon_0^4).
\]
Equation \eqref{eq:energy-decay-implication} with exponent $\frac12$
therefore yields
\begin{equation}\label{eq:half-order-decay}
N(t)\leq C_s(1+t)^{-1/4},
\end{equation}
where $C_s$ depends only on the parameters and initial norms listed
in Theorem~\ref{thm:decay}. This proves the desired decay when
$s=\frac12$.
For $\frac12<s<1$, we use \eqref{eq:half-order-decay} and
H\"older's inequality in time to obtain
\[
\begin{aligned}
&\int_0^\infty N(t)^{2s+1}D(t)^{\frac32-s}\dd t
\leq
\left(\int_0^\infty D(t)\dd t\right)^{\frac32-s}
\left(\int_0^\infty
N(t)^{(2s+1)/(s-\frac12)}\dd t\right)^{s-\frac12}
\\
&<\infty,
\end{aligned}
\]
since $(2s+1)/(4(s-\frac12))>1$ for $\frac12<s<1$.
Equation \eqref{eq:negative-energy-integral} proves that
$\mathcal L_s$ is uniformly bounded. We may thus apply
\eqref{eq:energy-decay-implication} with the given value of $s$.

Finally, Lemma~\ref{lem:poisson} and \eqref{eq:qt-estimates} imply
\[
\|\nabla\phi\|_{H^3}+\|q_t\|_{H^2}\leq CN(t).
\]
This proves \eqref{eq:solution-decay}. Integrating
\eqref{eq:decay-energy} from $t$ to infinity gives
\[
\int_t^\infty D(\tau)\dd\tau\leq C\mathcal E(t)
\leq C_s(1+t)^{-s}.
\]
Moreover, \eqref{eq:qt-estimates} and $N(t)\leq\delta\leq1$ give,
for almost every $t\geq0$,
\[
\|q_t\|_{H^2}^2
\leq C\|\nabla u\|_{H^2}^2+CN(t)^2\|q\|_{H^3}^2
\leq CD(t).
\]
This proves \eqref{eq:dissipation-tail}.
\end{proof}

\subsection{Decay in exterior domains}

\begin{proof}[Proof of Theorem~\ref{thm:ext-decay}]
Use the same energy $\mathcal E$ as in
Proposition~\ref{prop:ext-apriori}. For small $\varepsilon_0$ it
satisfies \eqref{eq:decay-energy}. The conservative equations
\eqref{eq:conservative-decay} and the expression for $S$ in
\eqref{eq:decay-stress} are unchanged. In particular,
\begin{equation}\label{eq:ext-momentum-bounds}
\begin{aligned}
&\|m\|+\|u\|+\|\nabla\phi\|\leq CN(t),
\\
&\|\nabla m\|+\|\nabla u\|+\|\nabla^2\phi\|
\leq CD(t)^{1/2}.
\end{aligned}
\end{equation}
For the first gradient, use
$\nabla m=(1+q)\nabla u+u\otimes\nabla q$ and
$\|u\|_{L^6}\|\nabla q\|_{L^3}
\leq CN(t)\|\nabla u\|$.

Choose $R>0$ so that the obstacle lies strictly inside $B_R$.
Let $\chi$ vanish on $B_R$ and equal one outside $B_{2R}$, with
all its derivatives supported in
$\mathcal A=\{R<|x|<2R\}\subset\Omega$. Set
\[
\begin{aligned}
&\phi_{\mathcal A}(t)=\frac1{|\mathcal A|}
\int_{\mathcal A}\phi(t,x)\dd x,\qquad
\widetilde\phi=\chi(\phi-\phi_{\mathcal A}),
\\
&\widetilde q=\Delta\widetilde\phi,\qquad
\widetilde m=\chi m.
\end{aligned}
\]
These quantities are extended by zero near the obstacle and are
regarded as functions on $\R^3$. Norms and inner products of the
localized quantities below are over $\R^3$ unless another domain
is indicated. Although $\widetilde\phi$ need not belong to $L^6$
after subtracting the local mean, only its derivatives are used.
They satisfy
\[
\begin{aligned}
&\nabla\widetilde\phi
=\chi\nabla\phi+(\phi-\phi_{\mathcal A})\nabla\chi,
\\
&\widetilde q
=\chi q+2\nabla\chi\cdot\nabla\phi
+(\Delta\chi)(\phi-\phi_{\mathcal A}).
\end{aligned}
\]
By Lemma~\ref{lem:ext-poisson} and local Poincar\'e,
\begin{equation}\label{eq:ext-cutoff-bounds}
\begin{aligned}
&\|\widetilde q\|_{H^1(\R^3)}
\leq C\|q\|_{H^1(\Omega)},\qquad
\|\nabla^2\widetilde\phi\|_{L^2(\R^3)}
\leq C\|q\|_{L^2(\Omega)},
\\
&\|\nabla\widetilde m\|_{L^2(\R^3)}
\leq C\|\nabla u\|_{L^2(\Omega)}.
\end{aligned}
\end{equation}

The localized continuity equation has a compactly supported error:
\begin{equation}\label{eq:ext-continuity-error}
\widetilde q_t+\Div\widetilde m
=\nabla\chi\cdot m
+2\nabla\chi\cdot\nabla\phi_t
+(\Delta\chi)(\phi_t-\partial_t\phi_{\mathcal A}).
\end{equation}
Its right-hand side has zero mean. Indeed,
$m+\nabla\phi_t=\PP m$, and the right-hand side equals
\[
\nabla\chi\cdot\PP m+
\Div\bigl((\phi_t-\partial_t\phi_{\mathcal A})\nabla\chi\bigr).
\]
The divergence term integrates to zero. For the other term, use
the orthogonality of $\PP m$ to $\nabla(1-\chi)$ in
\eqref{eq:projection-definition}; the function $1-\chi$ is
compactly supported in $\overline\Omega$ and belongs to
$\dot H^1(\Omega)$.

The divergence equation on the fixed annulus therefore provides
a vector field $R_0\in H^1_0(\mathcal A;\R^3)$ whose divergence
is the right-hand side of \eqref{eq:ext-continuity-error}.
We use the usual bounded right inverse of divergence on a smooth
bounded domain; see \cite{Galdi2011}. Extending $R_0$ by zero
gives
\begin{equation}\label{eq:ext-R0}
\widetilde q_t+\Div\widetilde m=\Div R_0,\qquad
\|R_0\|_{L^2(\R^3)}\leq CD(t)^{1/2}.
\end{equation}
For this bound, the local norms of $m$ are controlled by
$\|\nabla u\|$. Moreover,
\[
\|\nabla\phi_t\|_{L^2(\mathcal A)}
+\|\phi_t-\partial_t\phi_{\mathcal A}\|_{L^2(\mathcal A)}
\leq C\|q_t\|\leq CD(t)^{1/2}
\]
by \eqref{eq:ext-poisson-time} and local Poincar\'e.
The whole-space gradient projection, denoted by $\QQ_{\R^3}$,
then gives
\begin{equation}\label{eq:ext-cutoff-field-time}
\nabla\widetilde\phi_t
=-\QQ_{\R^3}(\widetilde m-R_0).
\end{equation}

The localized momentum equation is
\begin{equation}\label{eq:ext-conservative-cutoff}
\widetilde m_t+A\widetilde m+
\gamma\nabla\widetilde q-\nabla\widetilde\phi
=\Div(\chi S)+R_1,
\end{equation}
where
\[
\begin{aligned}
&R_1={}[A,\chi]m+
\gamma\nabla(\widetilde q-\chi q)+\gamma q\nabla\chi
\\
&\quad -(\phi-\phi_{\mathcal A})\nabla\chi-S\nabla\chi.
\end{aligned}
\]
This vector field also has support in $\mathcal A$.
The first four terms are bounded in $L^2$ by $CD(t)^{1/2}$,
using \eqref{eq:ext-local-sobolev} and
Lemma~\ref{lem:ext-poisson}. The products in $S$ satisfy
$\|S\|_{L^2(\Omega)}\leq CN(t)D(t)^{1/2}$.
Consequently,
\begin{equation}\label{eq:ext-R1}
\|R_1\|_{L^2(\R^3)}\leq CD(t)^{1/2}.
\end{equation}
Here and below $N(t)\leq\delta\leq1$.

Fix $0<s<\frac14$. We use the whole-space multiplier
$\Lambda=(-\Delta)^{1/2}$ and Lemma~\ref{lem:ext-negative}.

We retain the notation $\mathcal L_s$ for the localized
negative-order energy:
\[
\mathcal L_s(t)=\frac12\left(
\gamma\|\Lambda^{-s}\widetilde q\|^2
+\|\Lambda^{-s}\widetilde m\|^2
+\|\Lambda^{-s}\nabla\widetilde\phi\|^2\right).
\]
Apply $\Lambda^{-s}$ to
\eqref{eq:ext-R0}, \eqref{eq:ext-conservative-cutoff} and
\eqref{eq:ext-cutoff-field-time}. The pressure and electric
terms cancel by the same calculation as in the proof of
Theorem~\ref{thm:decay}, now on $\R^3$. Thus
\begin{equation}\label{eq:ext-negative-identity}
\begin{aligned}
&\frac{\dd}{\dd t}\mathcal L_s
+\mu\|\nabla\Lambda^{-s}\widetilde m\|^2
+(\mu+\lambda)\|\Div\Lambda^{-s}\widetilde m\|^2
\\
&\quad=-(\chi S,\nabla\Lambda^{-2s}\widetilde m)
+(R_1,\Lambda^{-2s}\widetilde m)
-\gamma(R_0,\nabla\Lambda^{-2s}\widetilde q)
\\
&\qquad\quad+(R_0,\Lambda^{-2s}\nabla\widetilde\phi).
\end{aligned}
\end{equation}
By \eqref{eq:ext-cutoff-bounds} and
\eqref{eq:ext-negative-local}--\eqref{eq:ext-negative-density},
the last three terms are bounded in absolute value by $C_sD(t)$.
This is where compact support of $R_0,R_1$ is used.

We next estimate the stress term.
The exterior Sobolev inequalities and
Lemma~\ref{lem:ext-poisson} give, for $6\leq r\leq\infty$,
\[
\|u\|_{L^r}+\|\nabla\phi\|_{L^r}
+\|q\|_{L^r}+\|\nabla u\|_{L^r}+\|\nabla q\|_{L^r}
\leq C_rD(t)^{1/2}.
\]
For $u$ and $\nabla\phi$, the $L^\infty$ bound follows by
applying the $W^{1,6}$ embedding and then the homogeneous
$L^6$ estimate to the function and its first derivatives.
The other bounds follow directly from $H^2$ embedding.
Since
\[
\frac{3+4s}{6}=\frac12+\frac{2s}{3},
\qquad \frac3{2s}>6,
\]
H\"older's inequality applied to the terms in
\eqref{eq:decay-stress} gives
\begin{equation}\label{eq:ext-stress-product}
\|\chi S\|_{L^{6/(3+4s)}(\R^3)}
\leq C_sN(t)D(t)^{1/2}.
\end{equation}
For example, use the $L^2$ norm of $m$ and the
$L^{3/(2s)}$ norm of $u$ for the convective term, and the
$L^2$ and $L^{3/(2s)}$ norms of $\nabla\phi$ for the electric
term. The pressure remainder is a bounded smooth coefficient
times $q^2$. For the viscous corrections, write
$\nabla(qu)=q\nabla u+u\otimes\nabla q$ and take the
undifferentiated factor in $L^2$.
Also,
\[
\|\nabla\Lambda^{-2s}\widetilde m\|_{L^{6/(3-4s)}(\R^3)}
\leq C_s\|\nabla\widetilde m\|
\leq C_sD(t)^{1/2}
\]
by the Hardy--Littlewood--Sobolev inequality.
Therefore the first term on the right of
\eqref{eq:ext-negative-identity} is bounded by $C_sN(t)D(t)$.
We have proved
\begin{equation}\label{eq:ext-negative-energy}
\frac{\dd}{\dd t}\mathcal L_s
+c\|\nabla\Lambda^{-s}\widetilde m\|^2
\leq C_s(1+N(t))D(t).
\end{equation}
Regularizing the negative multiplier by
$(\epsilon^2-\Delta)^{-s/2}$ justifies the calculation before
finiteness of the negative norms is known. The estimates above
are uniform in $\epsilon$, and monotone convergence gives
\[
\sup_{t\geq0}\mathcal L_s(t)
\leq\mathcal L_s(0)+C_s\varepsilon_0^2.
\]

We verify the initial negative norms using only the stated
$L^1$ assumptions. By localization and
Lemma~\ref{lem:ext-poisson},
\[
\|\widetilde m(0)\|_{L^1(\R^3)}
\leq C\|u_0\|_{L^1(\Omega)},\qquad
\|\widetilde q(0)\|_{L^1(\R^3)}
\leq\|q_0\|_{L^1(\Omega)}+C\varepsilon_0.
\]
The corresponding $L^2$ norms, and
$\|\nabla\widetilde\phi(0)\|_{L^2(\R^3)}$, are bounded by
$C\varepsilon_0$. Since
$\Delta\widetilde\phi(0)=\widetilde q(0)$,
\[
\widehat{\nabla\widetilde\phi}(0,\xi)
=-\frac{i\xi}{|\xi|^2}\widehat{\widetilde q}(0,\xi)
\quad\text{for almost every }\xi\ne0.
\]
It follows that
\[
\int_{|\xi|<1}|\xi|^{-2s}
|\widehat{\nabla\widetilde\phi}(0,\xi)|^2\dd\xi
\leq C\|\widetilde q(0)\|_{L^1}^2
\int_0^1r^{-2s}\dd r<\infty.
\]
The last integral is finite for $s<\frac12$, which includes
the stated range. The negative norms of $\widetilde q(0)$
and $\widetilde m(0)$ follow directly from $L^1\cap L^2$,
and their high frequencies are controlled in $L^2$.
Consequently,
\begin{equation}\label{eq:ext-negative-uniform}
\sup_{t\geq0}\mathcal L_s(t)
\leq C_s\bigl(
\varepsilon_0^2+\|q_0\|_{L^1(\Omega)}^2
+\|u_0\|_{L^1(\Omega)}^2\bigr).
\end{equation}

It remains to apply the same interpolation argument as in
Theorem~\ref{thm:decay}. Whole-space Fourier interpolation and
\eqref{eq:ext-cutoff-bounds} give
\[
\|\widetilde m\|^2+\|\nabla\widetilde\phi\|^2
\leq C_s\mathcal L_s(t)^{1/(1+s)}
D(t)^{s/(1+s)}
\leq C_sD(t)^{s/(1+s)}.
\]
Since $1/2\leq1+q\leq3/2$, localization back to $\Omega$ yields
\[
\|u\|^2+\|\nabla\phi\|^2
\leq C\bigl(
\|\widetilde m\|^2+\|\nabla\widetilde\phi\|^2
+\|\nabla u\|^2+\|q\|^2\bigr).
\]
The compact-region terms here are bounded both by $CD(t)$
and by $CN(t)^2\leq C\varepsilon_0^2$.
The same is true of the first three terms in
\[
N(t)^2\leq C\bigl(
\|q\|_{H^3}^2+\|\nabla u\|_{H^2}^2+\|\nabla u_t\|^2
+\|u\|^2+\|\nabla\phi\|^2\bigr),
\]
which follows from the momentum equation exactly as in the
proof of Theorem~\ref{thm:decay}.
For any nonnegative quantity bounded by both $CD(t)$ and
$C\varepsilon_0^2$, interpolation bounds it by
$C\varepsilon_0^{2/(1+s)}D(t)^{s/(1+s)}$.
Thus
\[
\mathcal E(t)\leq C_sD(t)^{s/(1+s)},\qquad
\frac{\dd}{\dd t}\mathcal E+c_s\mathcal E^{1+1/s}\leq0.
\]
Integration proves \eqref{eq:ext-solution-decay}, since
$\mathcal E(t)\leq C_s(1+t)^{-s}$ and
$N(t)\leq C_s(1+t)^{-s/2}$.
Finally, \eqref{eq:qt-estimates} and
Lemma~\ref{lem:ext-poisson} give
$\|q_t\|_{H^2}+\|\nabla\phi\|_{H^3}\leq CN(t)$ and
$\|q_t\|_{H^2}^2\leq CD(t)$.
Integrating the energy inequality from $t$ to infinity proves
\eqref{eq:ext-dissipation-tail} and completes the proof.
\end{proof}

\section*{Acknowledgements}
This research is supported by the National Natural Science Foundation
of China (Nos. 12471198 and 12431018).


\begin{thebibliography}{99}
\setlength{\itemsep}{2pt}
\interlinepenalty=10000

\bibitem{BieWangYao2017}
Q. Y. Bie, Q. R. Wang and Z.-A. Yao,
Optimal decay rate for the compressible Navier--Stokes--Poisson system in
the critical $L^p$ framework,
\emph{J. Differential Equations} \textbf{263} (2017), 8391--8417.

\bibitem{ChenHuangShi2024}
Y. Z. Chen, B. Huang and X. D. Shi,
Global well-posedness of classical solutions to the compressible
Navier--Stokes--Poisson equations with slip boundary conditions in 3D
bounded domains, \emph{J. Math. Fluid Mech.} \textbf{26} (2024), Article 41.

\bibitem{ChikamiDanchin2017}
N. Chikami and R. Danchin,
On the global existence and time decay estimates in critical spaces for
the Navier--Stokes--Poisson system,
\emph{Math. Nachr.} \textbf{290} (2017), 1939--1970.

\bibitem{Danchin2000}
R. Danchin,
Global existence in critical spaces for compressible Navier--Stokes equations,
\emph{Invent. Math.} \textbf{141} (2000), 579--614.

\bibitem{Deckelnick1992}
K. Deckelnick,
Decay estimates for the compressible Navier--Stokes equations in unbounded
domains, \emph{Math. Z.} \textbf{209} (1992), 115--130.

\bibitem{Donatelli2003}
D. Donatelli,
Local and global existence for the coupled Navier--Stokes--Poisson problem,
\emph{Quart. Appl. Math.} \textbf{61} (2003), 345--361.

\bibitem{DuLiu2025}
Y. Z. Du and H. R. Liu,
Existence and nonlinear stability of steady state solutions of the 3-D
Navier--Stokes--Poisson equations with non-flat doping profile of large
variations in exterior domains,
\emph{J. Math. Phys.} \textbf{66} (2025), Article 081511.

\bibitem{FeireislNovotnyPetzeltova2001}
E. Feireisl, A. Novotn\'y and H. Petzeltov\'a,
On the existence of globally defined weak solutions to the Navier--Stokes
equations, \emph{J. Math. Fluid Mech.} \textbf{3} (2001), 358--392.

\bibitem{Galdi2011}
G. P. Galdi,
\emph{An Introduction to the Mathematical Theory of the Navier--Stokes
Equations: Steady-State Problems}, second edition, Springer, New York, 2011.

\bibitem{GaoPengWeiYao2026}
J. C. Gao, L. Y. Peng, Y. C. Wei and Z.-A. Yao,
Global uniform regularity and vanishing viscosity limit for the
compressible Navier--Stokes--Poisson equations with damping under
slip boundary condition,
\emph{J. Differential Equations} \textbf{460} (2026), Article 114067.

\bibitem{HaoLi2009}
C. C. Hao and H. L. Li,
Global existence for compressible Navier--Stokes--Poisson equations in
three and higher dimensions,
\emph{J. Differential Equations} \textbf{246} (2009), 4791--4812.

\bibitem{Hoff1995}
D. Hoff,
Global solutions of the Navier--Stokes equations for multidimensional
compressible flow with discontinuous initial data,
\emph{J. Differential Equations} \textbf{120} (1995), 215--254.

\bibitem{HoffZumbrun1995}
D. Hoff and K. Zumbrun,
Multi-dimensional diffusion waves for the Navier--Stokes equations of
compressible flow,
\emph{Indiana Univ. Math. J.} \textbf{44} (1995), 603--676.

\bibitem{HsiaoLi2010}
L. Hsiao and H. L. Li,
Compressible Navier--Stokes--Poisson equations,
\emph{Acta Math. Sci. Ser. B (Engl. Ed.)} \textbf{30} (2010), 1937--1948.

\bibitem{HuangLiXin2012}
X. D. Huang, J. Li and Z. P. Xin,
Global well-posedness of classical solutions with large oscillations and
vacuum to the three-dimensional isentropic compressible Navier--Stokes
equations, \emph{Comm. Pure Appl. Math.} \textbf{65} (2012), 549--585.

\bibitem{Jiang1996}
S. Jiang,
Global spherically symmetric solutions to the equations of a viscous
polytropic ideal gas in an exterior domain,
\emph{Comm. Math. Phys.} \textbf{178} (1996), 339--374.

\bibitem{KageiKobayashi2002}
Y. Kagei and T. Kobayashi,
On large-time behavior of solutions to the compressible Navier--Stokes
equations in the half space in $\mathbb{R}^{3}$,
\emph{Arch. Ration. Mech. Anal.} \textbf{165} (2002), 89--159.

\bibitem{KageiKobayashi2005}
Y. Kagei and T. Kobayashi,
Asymptotic behavior of solutions of the compressible Navier--Stokes
equations on the half space,
\emph{Arch. Ration. Mech. Anal.} \textbf{177} (2005), 231--330.

\bibitem{Kobayashi2002}
T. Kobayashi,
Some estimates of solutions for the equations of motion of compressible
viscous fluid in the three-dimensional exterior domain,
\emph{J. Differential Equations} \textbf{184} (2002), 587--619.

\bibitem{KobayashiShibata1999}
T. Kobayashi and Y. Shibata,
Decay estimates of solutions for the equations of motion of compressible
viscous and heat-conductive gases in an exterior domain in $\mathbb R^3$,
\emph{Comm. Math. Phys.} \textbf{200} (1999), 621--659.

\bibitem{KobayashiSuzuki2008}
T. Kobayashi and T. Suzuki,
Weak solutions to the Navier--Stokes--Poisson equation,
\emph{Adv. Math. Sci. Appl.} \textbf{18} (2008), 141--168.

\bibitem{LiMatsumuraZhang2010}
H. L. Li, A. Matsumura and G. J. Zhang,
Optimal decay rate of the compressible Navier--Stokes--Poisson system in
$\mathbb{R}^{3}$,
\emph{Arch. Ration. Mech. Anal.} \textbf{196} (2010), 681--713.

\bibitem{LiuLuoZhong2020}
H. R. Liu, T. Luo and H. Zhong,
Global solutions to compressible Navier--Stokes--Poisson and Euler--Poisson
equations of plasma on exterior domains,
\emph{J. Differential Equations} \textbf{269} (2020), 9936--10001.

\bibitem{LiuZhong2021}
H. R. Liu and H. Zhong,
Global solutions to the initial boundary problem of 3-D compressible
Navier--Stokes--Poisson on bounded domains,
\emph{Z. Angew. Math. Phys.} \textbf{72} (2021), Article 78.

\bibitem{MatsumuraNishida1979}
A. Matsumura and T. Nishida,
The initial value problem for the equations of motion of compressible
viscous and heat-conductive fluids,
\emph{Proc. Japan Acad. Ser. A Math. Sci.} \textbf{55} (1979), 337--342.

\bibitem{MatsumuraNishida1980}
A. Matsumura and T. Nishida,
The initial value problem for the equations of motion of viscous and
heat-conductive gases,
\emph{J. Math. Kyoto Univ.} \textbf{20} (1980), 67--104.

\bibitem{MatsumuraNishida1983}
A. Matsumura and T. Nishida,
Initial-boundary value problems for the equations of motion of compressible
viscous and heat-conductive fluids,
\emph{Comm. Math. Phys.} \textbf{89} (1983), 445--464.

\bibitem{ShenWang2026a}
R. Shen and Y. Wang,
Global well-posedness of the compressible electrically conducting
viscoelastic fluids subject to the Coulomb force in the half space,
\emph{J. Differential Equations} \textbf{451} (2026), Article 113766.

\bibitem{ShenWang2026b}
R. Shen and Y. Wang,
On the half-space problem of the compressible heat-conducting
electro-viscoelastic system,
\emph{J. Differential Equations} \textbf{474} (2026), Article 114448.

\bibitem{ShenWangWu2025}
R. Shen, Y. Wang and Y. S. Wu,
Global well-posedness of the compressible elastic
Navier--Stokes--Poisson equations in half-spaces,
\emph{J. Math. Phys.} \textbf{66} (2025), Article 011514.

\bibitem{TanWangWang2015}
Z. Tan, Y. J. Wang and Y. Wang,
Stability of steady states of the Navier--Stokes--Poisson equations with
non-flat doping profile,
\emph{SIAM J. Math. Anal.} \textbf{47} (2015), 179--209.

\bibitem{TanYangZhaoZou2013}
Z. Tan, T. Yang, H. J. Zhao and Q. Y. Zou,
Global solutions to the one-dimensional compressible Navier--Stokes--Poisson
equations with large data,
\emph{SIAM J. Math. Anal.} \textbf{45} (2013), 547--571.

\bibitem{Wang2012}
Y. J. Wang,
Decay of the Navier--Stokes--Poisson equations,
\emph{J. Differential Equations} \textbf{253} (2012), 273--297.

\bibitem{WangWu2010}
W. K. Wang and Z. G. Wu,
Pointwise estimates of solution for the Navier--Stokes--Poisson equations
in multi-dimensions,
\emph{J. Differential Equations} \textbf{248} (2010), 1617--1636.

\bibitem{WuWang2023}
W. P. Wu and Y. Wang,
On the half-space or exterior problems of the 3D compressible elastic
Navier--Stokes--Poisson equations,
\emph{SIAM J. Math. Anal.} \textbf{55} (2023), 2996--3043.

\bibitem{ZhangLiZhu2011}
G. J. Zhang, H. L. Li and C. J. Zhu,
Optimal decay rate of the non-isentropic compressible
Navier--Stokes--Poisson system in $\mathbb R^3$,
\emph{J. Differential Equations} \textbf{250} (2011), 866--891.

\end{thebibliography}
\end{document}